\theoremstyle{definition}
\newtheorem{theorem}{Theorem}[section]
\newtheorem{lemma}[theorem]{Lemma}
\newtheorem{conjecture}[theorem]{Conjecture}
\newtheorem{proposition}[theorem]{Proposition}
\newtheorem{definition}[theorem]{Definition}
\newtheorem{example}[theorem]{Example}
\newtheorem{remark}[theorem]{Remark}
\newtheorem{notation}[theorem]{Notation}
\definecolor{Red}{rgb}{1,0,0}
\definecolor{Blue}{rgb}{0,0,1}
\definecolor{Green}{rgb}{0,1,0}
\definecolor{black}{rgb}{0.0, 0.0, 0.0}
\newcommand{\In}{\textit{In}}
\newcommand{\Out}{\textit{Out}}
\newcommand{\Leak}{\textit{Leak}}
\newcommand{\ds}{\displaystyle}
\newcommand{\msc}{\mathsf{C}}
\newcommand{\R}{\mathbb{R}}
\begin{document}

\title[Identifiability of mammillary models]{Parameter identifiability of \\ linear-compartmental mammillary models}
\author[Clemens]{Katherine Clemens}
\address{Bryn Mawr College, Bryn Mawr PA, USA. {\em Present address:} Rochester Institute of Technology, Rochester NY, USA}

\author[Martinez]{Jonathan Martinez}
\address{University of Southern California, Los Angeles CA, USA}

\author[Shiu]{Anne Shiu}
\address{Texas A\&M University, College Station TX, USA}

\author[Thompson]{Michaela Thompson}
\address{Fairfield University, Fairfield CT, USA}

\author[Warren]{Benjamin Warren}
\address{Texas A\&M University, College Station TX, USA}

\date{November 17, 2025}


\begin{abstract}
Linear compartmental models are a widely used tool for analyzing systems arising in biology, medicine, and more.  In such settings, it is essential to know whether model parameters can be recovered from experimental data.  This is the identifiability problem. 
For a class of linear compartmental models with one input and one output, namely, those for which the underlying graph is a bidirected tree, 
Bortner {\em et al.}\
completely characterized which such models are structurally identifiability, which means that every parameter is generically locally identifiable.  
Here, we delve deeper, by examining which individual parameters are locally versus globally identifiable. Specifically, we analyze mammillary models, which consist of one central compartment which is connected to all other (peripheral) compartments.  
For these models, which fall into five infinite families, we determine which individual parameters are locally versus globally identifiable, and we give formulas for some of the globally identifiable parameters in terms of the coefficients of input-output equations.  Our proofs rely on a combinatorial formula due to Bortner {\em et al.}\ for these coefficients.

\vspace{.1in}
\noindent
{\bf MSC Codes:} 
93B30, 
92C45, 
37N25, 
34A30, 
34A55 

\vspace{.1in}
\noindent
{\bf Keywords:} Linear compartmental model, structural identifiability, mammillary model, bidirected-tree model, input-output equation, elementary symmetric polynomial 
\end{abstract}

\maketitle

\section{Introduction}
 Compartmental models are used frequently in applications ranging from ecology~\cite{R.J.} to disease modeling~\cite{review-compartment-disease} and pharmacology~\cite{Cherruault_1992}.  
 A well-known example arises in pharmacokinetics, in which 
 linear compartmental models 
 are widely used to predict the concentration of an injected drug in the body over time; such knowledge is essential for determining optimal drug dosage and timing.  
 Linear compartmental models additionally have been used to describe the distribution of potassium between plasma and red blood cells, the distribution and turnover of cholesterol in the human body, 
 and the flow of energy within ecosystems~\cite{jacquez}.
 
 
 In such applications, it is crucial to determine whether the parameters of  models can be uniquely determined from data~\cite{structural-covid, animal, natural-resource-modeling, wieland2021structural}.  This is the problem of parameter {\em identifiability}.  Our focus here is on {\em structural identifiability}, which assumes the ideal situation of noiseless data.  
Access to noiseless data is generally not realistic, but assessing structural identifiability is nevertheless of great importance, as it is a prerequisite for {\em practical identifiability}, which allows for data with noise.

The inspiration for our work is a recent classification of (generic) local identifiability for a family of linear compartmental models, namely, those
in which the underlying graph is a bidirected tree.  This classification is due to Bortner {\em et al.}~\cite{BGMSS}, and it states the following: {\em A bidirected-tree model with one input and one output is generically locally identifiable if and only if the model has at most $1$ leak, and the distance from the input to output is at most~$1$}.  In other words, this result tells us exactly which models have \uline{all} parameters (at least) generically locally identifiable.  Such models are called {\em identifiable}.  

Bortner {\em et al.}'s result on bidirected-tree models motivates the following questions:
\begin{enumerate}[label=({\bf Q\arabic*})]
    \item In identifiable bidirected-tree models, which parameters are generically \uline{globally} identifiable (that is, uniquely identifiable)?
        \item In unidentifiable bidirected-tree models, what is the source of this unidentifiability, that is, which parameters are unidentifiable?        
    \end{enumerate}
The significance of question~({\bf Q1})  in applications comes from the fact that being able to estimate parameters uniquely is preferable to being able to do so only up to a finite set.  Similarly, regarding question~({\bf Q2}), being able to estimate some but not all parameters may suffice in some applications, so we wish to know which ones can or can not be recovered.

Our work addresses questions ({\bf Q1}) and ({\bf Q2}) for an 
 infinite family of bidirected-tree models.  These models are {\em mammillary models}, which means that the underlying graph is a star graph, consisting of a central compartment  connected to a number of peripheral compartments.  Mammillary models are of ``considerable practical importance''~\cite[pg.\ 31]{godfrey}, and instances of mammillary models in biological applications appear in~\cite{Cherruault_1992, godfrey, jacquez, environment, pharmacokinetic}.


\begin{figure}
    \centering
    \includegraphics[width=6in]{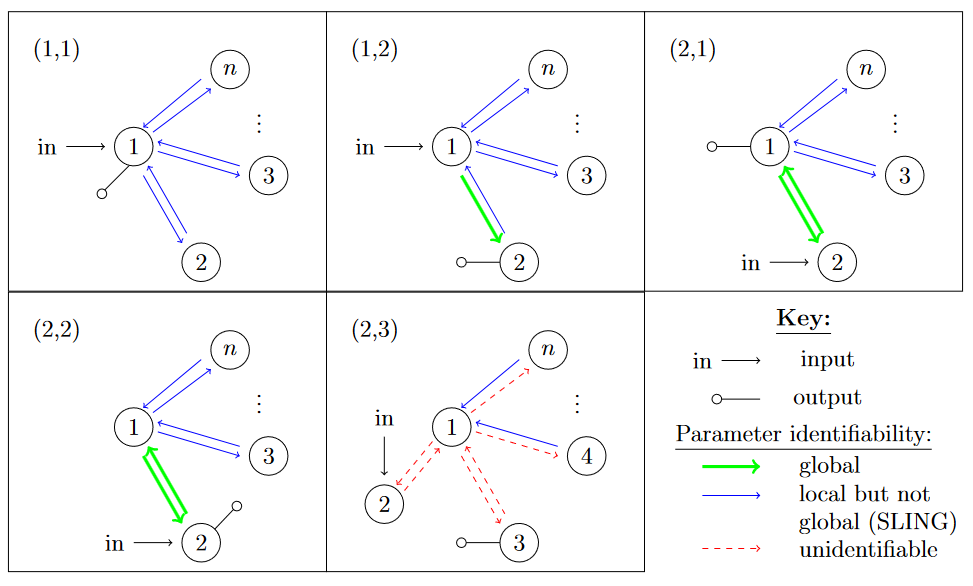}
    \caption{{\bf Summary of main results}.   
    All mammillary models (up to symmetry) with one input, one output, and no leaks are depicted.  The model with input in compartment-$i$ and output in compartment-$j$ is labeled by $(i,j)$. Identifiability properties of all parameters are shown; generically {globally} identifiable parameters are indicated by (green) boldface arrows, generically {locally} (but not globally) identifiable parameters are shown in (blue) plain arrows, and unidentifiable parameters are shown with (red) dashed arrows.  These properties are proven or conjectured in this work (see Theorem \ref{thm:summary} and Conjecture~\ref{conj:2-3}). The result for the model (1,1) was already known~\cite[\S 4.1]{cobelli-lepschy-romaninjacur}.    }
    \label{fig:summary}
\end{figure}

Five (families of) mammillary models are depicted in Figure~\ref{fig:summary}, which summarizes our main results.  These results concern all mammillary models with one input, one output, and no leaks.  Specifically, we prove the identifiability properties -- generically globally identifiable versus generically locally identifiable versus unidentifiable -- of most of the edge parameters, as shown in Figure~\ref{fig:summary} (Theorem~\ref{thm:summary}).  There are, however, several edges whose identifiability properties we verified computationally for several values of $n$ (here, $n$ denotes the number of compartments), but currently can not prove for all $n$.  
These edges are the edges in model~$(2,3)$ that are marked as unidentifiable (indicated by dashed arrows) in Figure~\ref{fig:summary} (Conjecture~\ref{conj:2-3}).  

 In summary, for the models in Figure~\ref{fig:summary}, we 
 completely answer question ({\bf Q1}) and we conjecture an answer to question ({\bf Q2}).
We emphasize that, to our knowledge, we are essentially the first to investigate questions~({\bf Q1}) and~({\bf Q2}) for infinite families of models.  
(Chau investigated similar questions for reparametrized versions of mammillary models with leaks at all compartments~\cite{chau-mam} and so-called catenary models~\cite{chau-cat}; while Cobelli, Lepschy, and Romanin Jacur investigated the mammillary models we label by (1,1) and other models~\cite{cobelli-lepschy-romaninjacur} -- see also~\cite[\S 5.8]{cobelli2002identifiability} -- but their focus was slightly different.)
On the other hand, for 
models arising in biological applications (cellular signaling, epidemiology, physiology, and other areas), 
Barreiro and Villaverde recently investigated the occurrence (and origins of) parameters that are generically locally -- but not globally -- identifiable (so-called SLING parameters, short for ``structurally locally identifiable but not globally'')~\cite{sling}. 

Our proofs, like those in several prior works~\cite{cycle-cat, bortner2024, 
BGMSS,
BM-2022,
BM-indisting,
CJSSS, GOS, GHMS, singularlocus}, 
involve analyzing input-output equations, which are equations involving parameters, input variables, output variables, and their derivatives.  This technique comes from following the differential-algebra approach to identifiability.  We also rely on a recent combinatorial formula for coefficients of input-output equations~\cite{BGMSS}, which allows us to analyze whether (and, when possible, how) these coefficients can be used to recover some or all of the parameters.

This article is structured as follows. Section~\ref{sec:background} provides background on linear compartmental models and their identifiability. Our results are proven in Section~\ref{sec:results}, and we conclude with a discussion in Section~\ref{sec:discussion}.

\section{Background} \label{sec:background}
In this section, we introduce linear compartmental models (Section~\ref{sec:linear-model}) and their
input-output equations (Section~\ref{sec:i-o-equations}).
Subsequently, we define  
identifiability of models (Section~\ref{sec:identifiability}) and of individual parameters (Section~\ref{sec:identifiability-parameters}).
Our notation matches that of prior work~\cite{GOS,MSE}.

\subsection{Linear compartmental models} \label{sec:linear-model}
Informally, a linear compartmental model is a directed graph, with certain vertices (compartments) marked as ``inputs'' (representing inflows into the system), ``outputs'' (compartments at which experimental measurements can be taken), and ``leaks'' (outflows from the system). The formal definition follows.

\begin{definition} \label{def:compartmental-model}
A \textbf{linear compartmental model}, denoted by $(G, \In, \Out, \Leak)$, consists of a directed graph $G = (V, E)$, in which each vertex $i\in V$ represents a \textbf{compartment} of the model, together with three sets $\In, \Out, \Leak \subseteq V$ which denote the sets of {\bf input}, {\bf output}, and {\bf leak} compartments, respectively. 
\end{definition}

In a linear compartmental model $(G, \In, \Out, \Leak)$, a directed edge $j\to i$ in $G$ represents a flow from compartment-$j$ to compartment-$i$. Each such edge has an associated parameter~$k_{ij}$. Similarly, each leak compartment $p \in \Leak$ has an associated parameter, $k_{0 p}$.  However, in this work, we focus on models without leaks (that is, $\Leak = \varnothing$).

In figures, we represent a linear compartmental model by its graph $G$, together with 
arrows for the inputs and the leaks, plus the following symbol for outputs: \begin{tikzpicture}
    \draw (0,0) circle (0.06);
    \draw (0.06,0.06) -- (.16,.19);
\end{tikzpicture}.  See, for instance, Figures~\ref{fig:summary} and~\ref{fig:mammillary}.  The models depicted in both figures belong to an important class of linear compartmental models, called 
``mammillary models'', which is the focus of this work. 

\begin{definition} \label{def:types-of-models} 
Let $\mathcal M = (G, \In, \Out, \Leak)$ be a linear compartmental model.
\begin{enumerate}
    \item $\mathcal M$ is a {\bf mammillary model} if $G=(V,E)$ is a bidirected-star graph: $V=\{1,2,\dots, n\}$, where $n \geq 3$, and $E=\{ 1 \leftrightarrows 2, 
    ~ 1 \leftrightarrows 3,~ \dots,~ 1 \leftrightarrows n
    \}$.
    \item $\mathcal{M}$ is \textbf{strongly connected} if $G$ is strongly connected, that is, for each unordered pair of vertices $i,j$ in $G$, there is a directed path in $G$ from $i$ to $j$ and also a directed path from $j$ to~$i$.
\end{enumerate}
\end{definition}
\noindent
Every mammillary model is strongly connected.

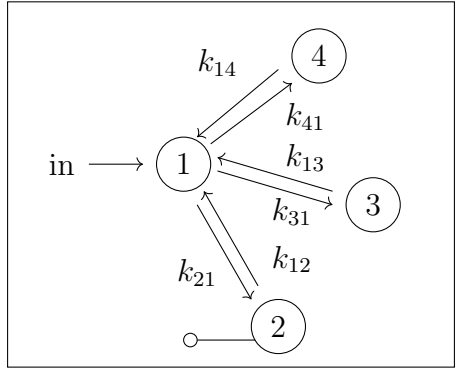
\begin{figure}[ht]
\begin{center}
	\begin{tikzpicture}[scale=1.8]
\draw (3,0) circle (0.2);
 	\draw (3.7,-1.2) circle (0.2);
 	\draw (4.4,-0.3) circle (0.2);
 	\draw (4 ,.8) circle (0.2);
    	\node[] at (3, 0) {1};
    	\node[] at (3.7, -1.2) {2};
    	\node[] at (4.4, -0.3) {3};
    	\node[] at (4, .8) {$4$};
     \draw[->] (3.1, -.3) -- (3.5, -1);
	 \draw[<-] (3.15, -.2) -- (3.55, -0.9);
	 \draw[->] (3.25, -0.05) -- (4.1, -0.3);
	 \draw[<-] (3.25, 0.05) -- (4.1, -0.2);
	 \draw[->] (3.2, .15) -- (3.8, .6);
	 \draw[<-] (3.1, 0.2) -- (3.7, .7);
   	 \node[] at (3.1, -.8) {$k_{21}$};
   	 \node[] at (3.8, -0.7) {$k_{12}$};
   	 \node[] at (3.9, .05) {$k_{13}$};
   	 \node[] at (3.8, -0.35) {$k_{31}$};
   	 \node[] at (3.25, 0.75) {$k_{14}$};
   	 \node[] at (3.9, 0.35) {$k_{41}$};
 	\draw (3.05,-1.3) circle (0.05);	
	 \draw[-] (3.53, -1.3 ) -- (3.1, -1.3);	
  
	 \draw[->] (2.3, 0) -- (2.7, 0);	
   	 \node[] at (2.1, 0) {in};

\draw (1.7,-1.5) rectangle (5., 1.2);
	\end{tikzpicture}
\end{center}
\caption{The mammillary model $\mathcal M = (G, \In, \Out, \Leak)$ with $4$ compartments and $\In=\{1\}$, $ \Out=\{2\}$, and $\Leak= \varnothing $.}
\label{fig:mammillary}
\end{figure}

\begin{definition} \label{def:compartmental-matrix}
Let
$\mathcal{M}=(G, \In, \Out, \Leak)$ be a linear compartmental model
with $n$ compartments, where $G=(V,E)$. 
The \textbf{compartmental matrix} of $\mathcal{M}$ is the $n{\times}n$ matrix $A = (a_{ij})$ with entries as follows:
    \begin{equation*}
        a_{ij} ~:=~
            \begin{cases}
                \ds -k_{0i} - \sum_{\{ p \mid (i,p) \in E \} } k_{pi}  &\text{if $i=j$ and $i \in \Leak$},\\
                \ds - \sum_{ \{ p \mid (i,p) \in E \}} k_{pi} &\text{if $i=j$ and $i \notin \Leak$},\\
                k_{ij} &\text{if $i\neq j$ and $j \to i$ is an edge of $G$},\\
                0 &\text{if $i\neq j$ and $j\to i$ is not an edge of $G$}.
            \end{cases}.
    \end{equation*}
\end{definition}

\begin{example} \label{ex:mammillary-compartmental-matrix}
The mammillary model in Figure~\ref{fig:mammillary} has the following compartmental matrix:
\begin{align} \label{eq:compartmental-matrix-running-example}
    A
    ~=~\left[\begin{array}{ccccc}
    -k_{21}-k_{31}-k_{41} & k_{12} & k_{13} & k_{14} \\
    k_{21} & -k_{12} & 0  & 0\\
    k_{31} & 0 & -k_{13} & 0\\
    k_{41} & 0 & 0 &-k_{14}
    \end{array}\right]~.
\end{align}
%
\end{example}

Next, we describe how to use the compartmental matrix to write the ordinary differential equation (ODE) system arising from a  model $\mathcal M = (G, \In, \Out, \Leak)$. 
Let $n$ denote the number of compartments, and 
let 
$x(t) = (x_1(t), x_2(t), \hdots, x_n(t))^{T}$ denote the vector of concentrations of all compartments at time $t$.  For $i \in \In$, let 
$u_{i}(t)$ denote the 
inflow rate into compartment-$i$ at time $t$.
For $i \in \Out$, let $y_i(t)$ denote the corresponding output variable (measurement) at time $t$.  
Now 
$\mathcal M$ defines the following ODE system with inputs, where $A$ is the compartmental matrix of $\mathcal M$:
\begin{align} \label{eq:ode}
    x'(t) &~=~ Ax(t) + u(t)~, \\
    y_i(t) &~=~ x_i(t)~, \ \text{ for all } i \in \Out~,
    \notag
\end{align}
where $u_i(t):=0$ for all $i \not\in \In$. 

\begin{example}[Example~\ref{ex:mammillary-compartmental-matrix}, continued] \label{ex:mammilary-ODE}
Using the 
compartmental matrix in~\eqref{eq:compartmental-matrix-running-example}, the ODE system~\eqref{eq:ode} for the model in Figure~\ref{fig:mammillary} is as follows:
\begin{align} 
\label{eq:odeEx} \notag
    x_1'(t) &~=~ (-k_{21}-k_{31}-k_{41}) x_1 + k_{12} x_2 + k_{13} x_3 + k_{14} x_4 + u_1(t) \\ \notag
    x_2'(t) &~=~ k_{21} x_1 -k_{12} x_2 \\ \notag
    x_3'(t) &~=~ k_{31} x_1 -k_{13} x_3 \\ \notag
    x_4'(t) &~=~ k_{41} x_1 -k_{14} x_4 \\ \notag
    y_2(t) &~=~ x_2(t)~. 
    \notag
\end{align}
\end{example}

\subsection{Input-output equations} \label{sec:i-o-equations}
An {\bf input-output equation} is an equation that holds along every solution $x(t)$ of the ODEs~\eqref{eq:ode} and involves only the parameters $k_{ij}$, input variables~$u_{\ell}$, output variables $y_m$, and their derivatives. 
The next result is a formula for input-output equations, which is due to Meshkat, Sullivant, and Eisenberg~\cite[Theorem 2]{MSE}. 

\begin{notation} \label{notation:submatrix}
    We let $B^{j,i}$ denote the submatrix obtained from a matrix $B$ by removing row-$j$ and column-$i$.  
\end{notation}

\begin{proposition}[Input-output equations~\cite{MSE}] \label{prop:in-out-equations} 
Let $\mathcal{M} = (G, \In, \Out, \Leak)$ be a linear compartmental model with $n$ compartments and at least one input. Let $A$ be the compartmental matrix. For $i\in \Out$, the following equation is an input-output equation for $\mathcal{M}$:
\begin{equation} \label{eq:in-out}
    \det(\partial I-A)y_{i} ~=~ \sum_{j\in In}(-1)^{i+j}\det\left[(\partial I-A)^{j,i}\right]u_j~,
\end{equation}
\noindent 
where $\partial I$ denotes the $n{\times}n$ matrix in which each diagonal entry is the differential operator $d/dt$ and all off-diagonal entries are 0.
\end{proposition}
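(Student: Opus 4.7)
The plan is to treat the ODE system $x'(t) = A x(t) + u(t)$ as a matrix equation $(\partial I - A)\, x(t) = u(t)$ whose entries live in the polynomial ring $\R[d/dt]$ of linear differential operators with constant real coefficients. Since this ring is commutative (the operator $d/dt$ commutes with scalars), standard results from linear algebra over a commutative ring, in particular the adjugate (classical adjoint) identity, are available.

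First, I would set $B := \partial I - A$ and recall that, over any commutative ring, $\mathrm{adj}(B)\cdot B = \det(B) \cdot I$, with the $(i,j)$-entry of $\mathrm{adj}(B)$ equal to $(-1)^{i+j}\det(B^{j,i})$. Multiplying the operator equation $B\, x(t) = u(t)$ on the left by $\mathrm{adj}(B)$ gives
\begin{equation*}
\det(\partial I - A)\, x(t) ~=~ \mathrm{adj}(\partial I - A)\, u(t),
\end{equation*}
which, read in the $i$-th coordinate, yields
\begin{equation*}
\det(\partial I - A)\, x_i(t) ~=~ \sum_{j=1}^n (-1)^{i+j} \det\!\left[(\partial I - A)^{j,i}\right] u_j(t).
\end{equation*}
Since $u_j \equiv 0$ for $j \notin \In$, the sum collapses to a sum over $j \in \In$, and substituting $y_i(t) = x_i(t)$ for $i \in \Out$ gives exactly the claimed input-output equation~\eqref{eq:in-out}. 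Equivalently, one can avoid invoking the adjugate by replacing the $i$-th column of $B$ by the vector $u$ to form a matrix $M_i(u)$, checking that Cramer's rule gives $\det(B)\, x_i = \det M_i(u)$, and then cofactor-expanding $\det M_i(u)$ along its $i$-th column.

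The only point that needs genuine care is verifying that these commutative-ring manipulations are legitimate in the present analytic setting: the entries of $B = \partial I - A$ are elements of $\R[d/dt]$, and this ring is commutative precisely because the parameters $k_{ij}$ (and hence the entries of $A$) are constants. If $A$ had time-dependent entries, the ring would be non-commutative and one could not freely multiply by $\mathrm{adj}(B)$; here that difficulty does not arise. I expect the bookkeeping of the sign $(-1)^{i+j}$ and of the removed row/column indices in $B^{j,i}$ to be the most error-prone step, and I would double-check it by writing out the $n=2$ or $n=3$ case and matching it against the general cofactor-expansion formula before declaring the proof complete.
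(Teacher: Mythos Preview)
Your argument is correct and is in fact the standard derivation: treat $(\partial I - A)x = u$ as a linear system over the commutative ring $\R[d/dt]$, multiply by the adjugate (equivalently, apply Cramer's rule), and read off the $i$-th coordinate. The observation that constancy of the entries of $A$ is what makes the ring commutative is exactly the right point to flag.

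However, note that the paper does \emph{not} supply its own proof of this proposition. It is stated as a known result and attributed to Meshkat, Sullivant, and Eisenberg~\cite{MSE} (their Theorem~2), with no argument given in the present paper. So there is no ``paper's proof'' to compare against here; your write-up is essentially the proof one finds in the cited source, and it would serve perfectly well as a self-contained justification if one were desired.
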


\begin{example}[Example~\ref{ex:mammilary-ODE}, continued] \label{ex:mamillary-i-o-eqn}
Recall that the model in Figure~\ref{fig:mammillary} has $\In=\{1\}$ and $\Out=\{2\}$.  We use the compartmental matrix in~\eqref{eq:compartmental-matrix-running-example} to obtain the input-output equation~\eqref{eq:in-out}:
\begin{multline*}
    \det \left(
    \begin{bmatrix}
    d/dt+
    k_{21}+k_{31}+k_{41} & -k_{12} & -k_{13} & 
    -k_{14} \\
    -k_{21} & d/dt +k_{12} & 0  & 0\\
    -k_{31} & 0 & d/dt +k_{13} & 0\\
    -k_{41} & 0 & 0 &d/dt+k_{14}
    \end{bmatrix} \right) 
    y_{2} 
    \\
    ~=~
    (-1)^{2+1}\det
    \left(
    \begin{bmatrix}
    -k_{21} &  0  & 0\\
    -k_{31} &  d/dt +k_{13} & 0\\
    -k_{41} &  0 &d/dt+k_{14}
    \end{bmatrix} \right) 
    u_1~,
\end{multline*}
which expands as follows: 
\begin{align} \label{eq:i-o-equation-running-example}
    \notag
        y_{2}^{(4)} 
        & + (k_{12}+k_{13} + k_{14} + k_{21} + k_{31} + k_{41})y_{2}^{(3)} 
        \\
    \notag
         &  + (k_{12}k_{13} + k_{12}k_{14} + k_{12}k_{31} + k_{12}k_{41} + k_{13}k_{14} + k_{13}k_{21} + k_{13}k_{41} + 
    k_{14}k_{21} + k_{14}k_{31})y_{2}^{(2)} 
        \\
    \notag
        &  
    + (k_{12}k_{13}k_{14} + k_{12}k_{13}k_{41} + k_{12}k_{14}k_{31} + k_{13}k_{14}k_{21})y_{2}^{(1)} 
    \\
    & \quad \quad \quad ~=~ 
    k_{21} u_{1}^{(2)} + (k_{13}k_{21} + k_{14}k_{21})u_{1}^{(1)} + k_{13}k_{14}k_{21} u_{1}
\end{align}
\end{example}

Next, we introduce notation for an index set for all parameters of a model $\mathcal{M} =  (G, \In, \Out, \Leak)$, where $G=(V,E)$:
    \begin{align} \label{eq:index-set-param}
        \mathcal{P}_{\mathcal{M}}
        ~:=~
        \{
        (j,i) \mid (j,i) \in E~,~ \mathrm{or}~i=0~\mathrm{and}~ j\in\Leak
        \}~.
            \end{align}
We use this set, 
which has size $\lvert E\rvert + \lvert \Leak\rvert$,
together with the input-output equation(s)~\eqref{eq:in-out}, to define the 
\textbf{coefficient map} of $\mathcal M$:
$$\msc : \mathbb{R}^{ \lvert E\rvert + \lvert \Leak\rvert} \rightarrow \mathbb{R} ^m~,$$ 
which
evaluates each vector of parameters $(k_{ij})_{(j,i) \in \mathcal{P}_{\mathcal{M}}}$ at 
the vector of non-constant coefficients of the input-output equation(s).
(Here, $m$ denotes the number of such coefficients.)

\begin{example}[Example~\ref{ex:mamillary-i-o-eqn}, continued] \label{ex:mammillary-coefficient-map}
For the model in Figure~\ref{fig:mammillary}, we use the input-output equation~\eqref{eq:i-o-equation-running-example} to obtain the coefficient map $\msc : \mathbb{R}^6 \to \mathbb{R}^6 $, where 
\begin{align} 
    \notag
    \msc_1 ~:&=~
    k_{12}+k_{13} + k_{14} + k_{21} + k_{31} + k_{41} \\ \notag
    \msc_2 ~:&=~ k_{12}k_{13} + k_{12}k_{14} + k_{12}k_{31} + k_{12}k_{41} + k_{13}k_{14} + k_{13}k_{21} + k_{13}k_{41} + 
    k_{14}k_{21} + k_{14}k_{31}\\ 
    \label{eq:coeff-running-example}
    \msc_3 ~:&=~ k_{12}k_{13}k_{14} + k_{12}k_{13}k_{41} + k_{12}k_{14}k_{31} + k_{13}k_{14}k_{21}\\ \notag
    \msc_4 ~:&=~ k_{21}\\ \notag
    \msc_5 ~:&=~ k_{13}k_{21} + k_{14}k_{21}\\ \notag
    \msc_6 ~:&=~ k_{13}k_{14}k_{21}~.
\end{align}
\end{example}

The next result, Proposition~\ref{prop:coeff-formula}
 below, is used to analyze coefficient maps.  We first need the following definition.

 \begin{definition}
     \label{def:spanning-incoming-forest}
     Let $H$ be a directed graph.
     \begin{enumerate}
         \item      A {\bf spanning subgraph} is 
     a subgraph of $H$ that includes all vertices of $H$.  
    \item     
    A \textbf{spanning incoming forest}
     of $H$ 
     is a spanning subgraph such that:
        \begin{itemize}
            \item      each node has at most one outgoing edge, and 
            \item     the underlying undirected multigraph
     is a forest, that is, has no cycles.
        \end{itemize}
     \end{enumerate}
 \end{definition}

Bortner {\em et al.}\ gave a combinatorial formula, which is in terms of spanning incoming forests, for the coefficients of input-output equations~\cite[Theorem~3.1]{BGMSS}.  We state the version of their result for the case of models with one input, one output, and no leaks (which is the situation we focus on in this work), as follows.

\begin{proposition}[Coefficients of input-output equations for linear compartmental models~\cite{BGMSS}] 
\label{prop:coeff-formula}
    Let $n \geq 3$.
    Let $\mathcal{M} = (G, \In, \Out, \Leak)$
    be an $n$-compartment model with one input, one output, and no leaks: $\In=\{j\}$, $\Out = \{i\}$, and $\Leak= \varnothing$.  Write the input-output equation~\eqref{eq:in-out} as follows:
 	\begin{align}
    \label{eq:eq:i-o-c-and-d}
	 y_i^{(n)} + c_{n-1}  y_i^{(n-1)} + \dots  + c_1 y_i' + c_0 y_i ~=~ 
		d_{n-1} u_j^{(n-1)} + 
			 \dots  + d_{1} u_j' + d_{0} u_j
                ~.
	\end{align}
	Then the coefficients of the input-output equation~\eqref{eq:eq:i-o-c-and-d} are given by:
    \begin{align} \label{eq:bortner-formula-coeff}
    c_k ~&=~ \sum_{F \in \mathcal{F}_{n-k}(G)} \pi_F \quad \text{ for } k=0,1, \ldots , n-1~, \ \text{ and } \\
    \notag
    d_k ~&=~ \sum_{F \in \mathcal{F}^{j,i}_{n-k-1}(G^*_i)} \pi_F \quad \text{ for } k=0,1,\ldots, n-1~,
    \end{align}
where:  
\begin{itemize}
    \item $G^*_i$ is the directed graph obtained from $G$ by removing all outgoing edges from vertex-$i$ (the output), 
	\item  $\mathcal{F}_{\ell} ( G )$ is the set of all spanning incoming  forests of $G$ with exactly $\ell$ edges,  
	\item  $ \mathcal{F}_{\ell}^{j, i } ( G_i^* )$ is the set of all spanning incoming forests of $G_i^*$ with exactly $\ell$ edges, such that some connected component (of the underlying undirected graph) contains both of the vertices $j$ and $i$, 
    \item 
    $\pi_F$ is the product of edge labels of a graph $F$, that is,
    $\pi_F := \prod_{ e \in E_F } L(e)~$, where 
    $L(e)$ is the label of edge $e$, and 
    $E_F$ is set of edges of $F$.  If $E_F= \varnothing$, then $\pi_F:=1$.
\end{itemize} 
\end{proposition}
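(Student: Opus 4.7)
The plan is to match polynomial coefficients on both sides of the input-output equation from Proposition~\ref{prop:in-out-equations} by substituting the formal variable $s$ for $d/dt$ and invoking an all-minors form of the Matrix-Tree Theorem applied to the compartmental matrix $A$.

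Because the model has no leaks, each column of $A$ sums to zero, so $L := -A$ is a weighted Laplacian of $G$: its diagonal entry $L_{ii} = \sum_{p} k_{pi}$ records the total flow out of compartment $i$, and $L_{pi} = -k_{pi}$ records the (negated) weight of the edge $i \to p$. For the $c_k$ coefficients, the coefficient of $s^k$ in $\det(sI - A)$ is the usual characteristic-polynomial formula $(-1)^{n-k}\sum_{|S|=n-k}\det A[S,S]$, and passing from $A$ to $L = -A$ contributes an extra factor $(-1)^{|S|} = (-1)^{n-k}$, so the two signs cancel to give
\[
c_k ~=~ \sum_{|S|=n-k}\det L[S,S].
\]
Applying the All-Minors Matrix-Tree Theorem (Chaiken, Kelmans--Chelnokov) to $L$ rewrites each principal minor $\det L[S,S]$ as $\sum_F \pi_F$ over spanning incoming forests $F$ of $G$ whose component-sinks (the vertices with no outgoing edge in $F$) are exactly $S^c$. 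A spanning incoming forest with $n-k$ edges has exactly $k$ components and hence $k$ sinks, which uniquely determine $S$; summing over all admissible $S$ thus identifies $c_k$ with $\sum_{F \in \mathcal{F}_{n-k}(G)} \pi_F$.

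For the $d_k$ coefficients, I would apply the same strategy to $(-1)^{i+j}\det[(sI - A)^{j,i}]$: expand it as a polynomial in $s$ by treating the $sI$ part and the $-A$ part separately, so that the coefficient of $s^k$ appears as a signed sum of minors of $A$ obtained by choosing $n-k-1$ rows and $n-k-1$ columns, with the row set always omitting $j$ and the column set always omitting $i$. Because these row and column sets differ by one element, the resulting minors are not principal, and the general version of the All-Minors Matrix-Tree Theorem then expresses them as sums over spanning forests of $G$ subject to a \emph{linking} constraint: the component containing the sink $i$ must also contain vertex $j$. Since no edge leaving $i$ can appear in a forest where $i$ is a sink, this is equivalent to requiring $F$ to be a spanning incoming forest of $G_i^*$ in which $j$ and $i$ lie in the same component, i.e.\ $F \in \mathcal{F}_{n-k-1}^{j,i}(G_i^*)$, yielding $d_k = \sum_F \pi_F$.

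The main obstacle I expect is the sign bookkeeping in the $d_k$ case: the cofactor contributes $(-1)^{i+j}$, the all-minors linking formula contributes a permutation sign coming from the path distinguished inside the linked component, and an additional $(-1)$ is produced for every row/column selected from $-A$ rather than $sI$. Verifying that all these signs collapse to $+1$ uniformly on every $F \in \mathcal{F}^{j,i}_{n-k-1}(G_i^*)$ is the delicate step; the cleanest way to see this is to recognize that each such forest distinguishes a unique directed path from $j$ to $i$, whose permutation sign exactly cancels the cofactor sign, but making this rigorous requires a careful bijective argument (essentially reproving the directed-path incarnation of Cramer's rule that is familiar from the transfer-function formulas of electrical-network theory).
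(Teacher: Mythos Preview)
The paper does not prove this proposition; it is quoted from Bortner \emph{et al.}\ \cite[Theorem~3.1]{BGMSS} and used as a black box, so there is no ``paper's own proof'' to compare against.  Your proposal is essentially the standard route (and, as far as one can tell from the literature, the route taken in \cite{BGMSS}): expand $\det(sI-A)$ and $(-1)^{i+j}\det[(sI-A)^{j,i}]$ in powers of $s$, recognise the coefficients as (non-)principal minors of the Laplacian $L=-A$, and invoke Chaiken's All-Minors Matrix-Tree Theorem.

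Two small points.  First, in your $d_k$ paragraph the description ``choosing $n-k-1$ rows and $n-k-1$ columns'' is slightly loose: after deleting row $j$ and column $i$, the $s$'s occur only at positions $(\ell,\ell)$ with $\ell\notin\{i,j\}$, so the minors of $-A$ that arise always have row set $R\cup\{i\}$ and column set $R\cup\{j\}$ for the \emph{same} $R\subseteq[n]\setminus\{i,j\}$.  This ``off-by-one'' structure is exactly the hypothesis of Chaiken's linking formula with a single linked pair, which is what you need.  Second, your worry about signs is well placed but resolvable: in Chaiken's theorem the sign attached to a forest is $(-1)^{\mathrm{inv}(\sigma)}$, where $\sigma$ is the bijection between deleted rows and deleted columns induced by the forest's linking paths.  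Here all roots except $i$ are fixed and $j$ is linked to $i$, so $\sigma$ is the permutation of $R\cup\{i,j\}$ that swaps nothing except sending $j\mapsto i$ (with the remaining elements of $R$ fixed); its inversion parity relative to the row/column orderings is exactly $(-1)^{i+j}$, cancelling the cofactor sign uniformly.  So the approach goes through without a genuine gap.
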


\begin{remark}[$c_0=d_{n-1}=0$] \label{rem:c0=0}
    In the context of Proposition~\ref{prop:coeff-formula}, 
    we have $c_0=0$.  To see this, observe that the formula for $c_0$ in~\eqref{eq:bortner-formula-coeff} is a sum over 
    certain $n$-edge cycle-free subgraphs of $G$.  However, $G$ has only 
    $n$ vertices, and hence has \uline{no}
    $n$-edge cycle-free subgraphs.  We conclude that $c_0$ is the empty sum and hence is $0$.
    An alternate proof can be given directly from~\eqref{eq:in-out}.  Similarly, $d_{n-1}=0$.
\end{remark}

\begin{remark} \label{rem:error-in-bortner-et-al}
In Proposition~\ref{prop:coeff-formula}, the input-output equation~\eqref{eq:eq:i-o-c-and-d} corrects a sign error in~\cite[Theorem~3.1]{BGMSS}, which erroneously contains a factor of $
             (-1)^{i+j} $ on the right-hand side.
\end{remark}

\begin{example}[Example~\ref{ex:mammillary-coefficient-map}, continued] \label{ex:mammillary-borten-map}
Recall that the $4$-compartment mammillary model 
$\mathcal M = (G, \In, \Out, \Leak)$
in Figure~\ref{fig:mammillary} has 
$\In=\{1\}$, $ \Out=\{2\}$, and $\Leak= \varnothing $.  Following the notation in~\eqref{eq:eq:i-o-c-and-d}, the coefficients $c_i$ and $d_i$ are given by $(c_3, c_2, c_1, d_2, d_1, d_0):= (\msc_1, \msc_2, \msc_3, \msc_4, \msc_5, \msc_6)$, where the $\msc_i$ are as in~\eqref{eq:coeff-running-example}, and also $c_0=0$. Next, the graphs $G$ and $G_2^*$ are shown in Figures~\ref{fig:original-graph-G} and~\ref{fig:G1}, respectively.
It is now straightforward to check that the coefficients $c_i$ and $d_i$ satisfy the formulas~\eqref{eq:bortner-formula-coeff}.  For instance, $c_1=\msc_3$ is a sum over the $4$ spanning incoming forests of $G$ with $4-1=3$ edges; these $4$ forests are depicted in Figure  ~\ref{fig:spanning-inc-forest-G}.  
Similarly, $d_1= \msc_5$ is a sum over the $2$ spanning incoming forests of $G_2^*$ with $4-1-1=2$ edges in which compartments $1$ and $2$ are connected; these $2$ forests are shown in Figure ~\ref{fig:spanning-inc-forest-G*}.  
\end{example}

\begin{center}
\begin{figure}
\centering
\begin{minipage}{.4\textwidth}
\centering
	\begin{tikzpicture}[scale=1.7]
\draw (3,0) circle (0.2);
 	\draw (3.7,-1.2) circle (0.2);
 	\draw (4.4,-0.3) circle (0.2);
 	\draw (4 ,.8) circle (0.2);
    	\node[] at (3, 0) {1};
    	\node[] at (3.7, -1.2) {2};
    	\node[] at (4.4, -0.3) {3};
    	\node[] at (4, .8) {$4$};
    	\node[] at (2.6, -1.3) {$G$};
     \draw[->] (3.1, -.3) -- (3.5, -1);
	 \draw[<-] (3.15, -.2) -- (3.55, -0.9);
	 \draw[->] (3.25, -0.05) -- (4.1, -0.3);
	 \draw[<-] (3.25, 0.05) -- (4.1, -0.2);
	 \draw[->] (3.2, .15) -- (3.8, .6);
	 \draw[<-] (3.1, 0.2) -- (3.7, .7);
   	 \node[] at (3.1, -.8) {$k_{21}$};
   	 \node[] at (3.7, -0.7) {$k_{12}$};
   	 \node[] at (3.9, .05) {$k_{13}$};
   	 \node[] at (3.8, -0.35) {$k_{31}$};
   	 \node[] at (3.25, 0.75) {$k_{14}$};
   	 \node[] at (3.9, 0.35) {$k_{41}$};
\draw (2.3,-1.5) rectangle (5., 1.2);
	\end{tikzpicture}

    \captionof{figure}{The graph $G$ for the mammillary model in Figure~\ref{fig:mammillary}.}
    \label{fig:original-graph-G}
\end{minipage}%
\begin{minipage}{.7\textwidth}
\centering
	\begin{tikzpicture}[scale=1.7]
\draw (3,0) circle (0.2);
 	\draw (3.7,-1.2) circle (0.2);
 	\draw (4.4,-0.3) circle (0.2);
 	\draw (4 ,.8) circle (0.2);
    	\node[] at (3, 0) {1};
    	\node[] at (3.7, -1.2) {2};
    	\node[] at (4.4, -0.3) {3};
    	\node[] at (4, .8) {$4$};
    	\node[] at (2.6, -1.3) {$G_2^*$};
     \draw[->] (3.1, -.3) -- (3.5, -1);
	 \draw[->] (3.25, -0.05) -- (4.1, -0.3);
	 \draw[<-] (3.25, 0.05) -- (4.1, -0.2);
	 \draw[->] (3.2, .15) -- (3.8, .6);
	 \draw[<-] (3.1, 0.2) -- (3.7, .7);
   	 \node[] at (3.1, -.8) {$k_{21}$};
   	 \node[] at (3.9, .05) {$k_{13}$};
   	 \node[] at (3.8, -0.35) {$k_{31}$};
   	 \node[] at (3.25, 0.75) {$k_{14}$};
   	 \node[] at (3.9, 0.35) {$k_{41}$};
\draw (2.3,-1.5) rectangle (5., 1.2);
	\end{tikzpicture}
    \caption{The graph $G_{2}^*$, obtained from the graph $G$ in Figure~\ref{fig:original-graph-G} by removing the outgoing edge from compartment-2 (the output in Figure~\ref{fig:mammillary}).}
    \label{fig:G1}
    \end{minipage}

    \begin{minipage}{1\textwidth}
    
\centering
\vspace{0.5cm}
	\begin{tikzpicture}[scale=1.4]
 \begin{scope}[xshift=-4.5cm] 
\draw (3,0) circle (0.2);
 	\draw (3.7,-1.2) circle (0.2);
 	\draw (4.4,-0.3) circle (0.2);
 	\draw (4 ,.8) circle (0.2);
    	\node[] at (3, 0) {1};
    	\node[] at (3.7, -1.2) {2};
    	\node[] at (4.4, -0.3) {3};
    	\node[] at (4, .8) {$4$};
	 \draw[<-] (3.15, -.2) -- (3.55, -0.9);
	 \draw[<-] (3.25, 0.05) -- (4.1, -0.2);
	 \draw[<-] (3.1, 0.2) -- (3.7, .7);
   	 \node[] at (3.7, -0.7) {$k_{12}$};
   	 \node[] at (3.9, .05) {$k_{13}$};
   	 \node[] at (3.25, 0.75) {$k_{14}$};
\draw (2.3,-1.5) rectangle (5., 1.2);
 \end{scope}

 \begin{scope}[xshift=-1.5cm] 
\draw (3,0) circle (0.2);
 	\draw (3.7,-1.2) circle (0.2);
 	\draw (4.4,-0.3) circle (0.2);
 	\draw (4 ,.8) circle (0.2);
    	\node[] at (3, 0) {1};
    	\node[] at (3.7, -1.2) {2};
    	\node[] at (4.4, -0.3) {3};
    	\node[] at (4, .8) {$4$};
     \draw[->] (3.1, -.3) -- (3.5, -1);
	 \draw[<-] (3.25, 0.05) -- (4.1, -0.2);
	 \draw[<-] (3.1, 0.2) -- (3.7, .7);
   	 \node[] at (3.1, -.8) {$k_{21}$};
   	 \node[] at (3.9, .05) {$k_{13}$};
   	 \node[] at (3.25, 0.75) {$k_{14}$};
\draw (2.3,-1.5) rectangle (5., 1.2);
 \end{scope}

 \begin{scope}[xshift=1.5cm] 
\draw (3,0) circle (0.2);
 	\draw (3.7,-1.2) circle (0.2);
 	\draw (4.4,-0.3) circle (0.2);
 	\draw (4 ,.8) circle (0.2);
    	\node[] at (3, 0) {1};
    	\node[] at (3.7, -1.2) {2};
    	\node[] at (4.4, -0.3) {3};
    	\node[] at (4, .8) {$4$};
	 \draw[<-] (3.15, -.2) -- (3.55, -0.9);
	 \draw[->] (3.25, -0.05) -- (4.1, -0.3);
	 \draw[<-] (3.1, 0.2) -- (3.7, .7);
   	 \node[] at (3.7, -0.7) {$k_{12}$};
   	 \node[] at (3.8, -0.35) {$k_{31}$};
   	 \node[] at (3.25, 0.75) {$k_{14}$};
\draw (2.3,-1.5) rectangle (5., 1.2);
 \end{scope}

 \begin{scope}[xshift=4.5cm] 
\draw (3,0) circle (0.2);
 	\draw (3.7,-1.2) circle (0.2);
 	\draw (4.4,-0.3) circle (0.2);
 	\draw (4 ,.8) circle (0.2);
    	\node[] at (3, 0) {1};
    	\node[] at (3.7, -1.2) {2};
    	\node[] at (4.4, -0.3) {3};
    	\node[] at (4, .8) {$4$};
	 \draw[<-] (3.15, -.2) -- (3.55, -0.9);
	 \draw[<-] (3.25, 0.05) -- (4.1, -0.2);
	 \draw[->] (3.2, .15) -- (3.8, .6);
   	 \node[] at (3.7, -0.7) {$k_{12}$};
   	 \node[] at (3.9, .05) {$k_{13}$};
   	 \node[] at (3.9, 0.35) {$k_{41}$};
\draw (2.3,-1.5) rectangle (5., 1.2);
 \end{scope}
	\end{tikzpicture}

    \captionof{figure}{Spanning incoming forests of $G$ with 3 edges.}
    \label{fig:spanning-inc-forest-G}
\end{minipage}%

\begin{minipage}{1\textwidth}
    
\centering
\vspace{0.5cm}
	\begin{tikzpicture}[scale=1.4]
 \begin{scope}[xshift=-1.5cm] 
\draw (3,0) circle (0.2);
 	\draw (3.7,-1.2) circle (0.2);
 	\draw (4.4,-0.3) circle (0.2);
 	\draw (4 ,.8) circle (0.2);
    	\node[] at (3, 0) {1};
    	\node[] at (3.7, -1.2) {2};
    	\node[] at (4.4, -0.3) {3};
    	\node[] at (4, .8) {$4$};
     \draw[->] (3.1, -.3) -- (3.5, -1);
	 \draw[<-] (3.25, 0.05) -- (4.1, -0.2);
   	 \node[] at (3.1, -.8) {$k_{21}$};
   	 \node[] at (3.9, .05) {$k_{13}$};
\draw (2.3,-1.5) rectangle (5., 1.2);
 \end{scope}

 \begin{scope}[xshift=1.5cm] 
\draw (3,0) circle (0.2);
 	\draw (3.7,-1.2) circle (0.2);
 	\draw (4.4,-0.3) circle (0.2);
 	\draw (4 ,.8) circle (0.2);
    	\node[] at (3, 0) {1};
    	\node[] at (3.7, -1.2) {2};
    	\node[] at (4.4, -0.3) {3};
    	\node[] at (4, .8) {$4$};
     \draw[->] (3.1, -.3) -- (3.5, -1);
	 \draw[<-] (3.1, 0.2) -- (3.7, .7);
   	 \node[] at (3.1, -.8) {$k_{21}$};
   	 \node[] at (3.25, 0.75) {$k_{14}$};
\draw (2.3,-1.5) rectangle (5., 1.2);
 \end{scope}
	\end{tikzpicture}
    \captionof{figure}{Spanning incoming forests of $G_2^*$ with 2 edges.}
    \label{fig:spanning-inc-forest-G*}
\end{minipage}%

\end{figure}
\end{center}

\subsection{Identifiability of models} \label{sec:identifiability}

The idea behind the problem of model identifiability is as follows:
Given measured values of how much enters and exits the input and output compartments, respectively, can we infer all parameter values of a model?  To be more precise, we are asking whether, from generic values of the inputs and initial conditions, we can recover the parameters (at least locally, which means up to a finite set) from exact measurements of the inputs and the outputs. This is the concept of {\em generic local identifiability}. 

This notion of identifiability is equivalent (for strongly connected models with at least one input) to  Definition~\ref{def:ident} below, which checks identifiability by way of coefficient maps. 
This equivalence was proven by Ovchinnikov, Pogudin, and Thompson~\cite[Main Result 3]{OPT}.  

In what follows, we use ``almost every'' in the usual measure-theoretic sense: 
``{\bf almost every} $x$ in a set $X$'' means ``for all
$x$ in $X \smallsetminus Z$, for some measure-zero subset $Z$ of $X$''.

\begin{definition}[Identifiability of $\mathcal{M}$]\label{def:ident} 
Let $ \mathcal M =(G, \In, \Out, \Leak)$
be
a linear compartmental model that is strongly connected and has at least one input.  
Let $G=(V,E)$, and let $\msc :  \R^{|E| + |\Leak|} \rightarrow \R ^m$ denote the coefficient map of $\mathcal M$.
\begin{enumerate}
    \item  $\mathcal M$ is \textbf{generically locally identifiable} 
    if
     almost every
     point in $\R^{|E| + |\Leak|}$ has an open neighborhood on which $\msc$ is injective.
    \item $\mathcal M$ is {\bf unidentifiable} if $\mathcal M$ is \uline{not} generically locally identifiable.
\end{enumerate}
\end{definition}

%
%

The following result, which was proved by Bortner~{\em et~al.}~\cite[Corollary~5.4]{BGMSS}, characterizes identifiability of mammillary models (Definition~\ref{def:types-of-models}(1)) with one input and one output.

\begin{proposition}[Identifiability of mammillary models] \label{prop:iden-mammillary}
    A mammillary model 
    $ \mathcal M =(G, \In, \Out, \Leak)$
    with $|\In|=|\Out|=1$  is generically locally identifiable
    if and only if 
    $|Leak| \leq 1$ 
    and one or more of the following hold:
    (1) $In=Out$, (2) $In=\{1\}$, or (3) $Out=\{1\}$.
\end{proposition}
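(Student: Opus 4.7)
The plan is to deduce this proposition directly from the more general classification of Bortner \emph{et al.}\ for bidirected-tree models (cited in the introduction), by observing that a mammillary model is a special case. Recall that their theorem states: a bidirected-tree model with one input and one output is generically locally identifiable if and only if the model has at most one leak \emph{and} the distance from the input to the output is at most $1$.

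First, I would verify that a mammillary model qualifies as a bidirected-tree model. By Definition~\ref{def:types-of-models}(1), the underlying graph $G$ of a mammillary model is a bidirected star on $n \geq 3$ vertices, with central compartment $1$ and peripheral compartments $2, \dots, n$; a star is in particular a tree, so the hypothesis of Bortner \emph{et al.}'s theorem applies.

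Next, I would translate the distance condition into the language of the star. The star graph has diameter $2$, and for $\In = \{i\}$ and $\Out = \{j\}$ there are only three possible distances: distance $0$ when $i = j$ (condition (1)); distance $1$ when exactly one of $i,j$ equals the center, i.e.\ $i = 1$ or $j = 1$ (conditions (2) and (3)); and distance $2$ when $i,j \in \{2,\dots,n\}$ with $i \ne j$. Hence ``distance from input to output at most $1$'' is equivalent to the disjunction of the three listed conditions.

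Combining these two observations with the at-most-one-leak hypothesis yields the stated equivalence. There is essentially no obstacle here: the proof is a direct specialization of a known theorem, with the only content being the elementary distance computation in a star graph. If one wished to avoid invoking the bidirected-tree result as a black box, the harder alternative route would be to argue directly from Proposition~\ref{prop:coeff-formula}: show, using the spanning-incoming-forest formula, that when $\In = \Out$, or when one of $\In,\Out$ equals $\{1\}$, the Jacobian of the coefficient map $\msc$ has full rank at a generic parameter (giving local injectivity), and conversely that when the input and output lie in distinct peripheral compartments, the coefficient map exhibits a positive-dimensional fiber (e.g.\ a symmetry swapping the roles of the two peripheral compartments or an appropriate rescaling of their rate parameters), forcing unidentifiability.
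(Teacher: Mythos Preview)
Your proposal is correct and matches the paper's treatment: the paper does not prove this proposition independently but cites it directly as \cite[Corollary~5.4]{BGMSS}, i.e., exactly the specialization of the bidirected-tree theorem that you spell out. Your explicit translation of the ``distance at most~$1$'' condition into the three cases for a star graph is the natural (and only) content of that specialization.
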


\begin{example}[Example~\ref{ex:mammillary-borten-map}, continued]
    \label{ex:running-ex-iden-of-model}
    For the mammillary model in Figure~\ref{fig:mammillary},
    we have 
     $In = \{1\}$, $\Out=\{2\}$,
     and no leaks.  
    Therefore, 
    by Proposition~\ref{prop:iden-mammillary}, the model is generically locally identifiable. 
    In the next subsection, we examine whether the individual parameters of this model are generically globally identifiable versus generically locally identifiable.
\end{example}

\subsection{Identifiability of parameters} \label{sec:identifiability-parameters}

Definition~\ref{def:ident} above concerns the identifiability of models, whereas the next definition pertains to the identifiability of individual parameters of a model (cf.~\cite[Definition 2.5]{HOPY}).

\begin{definition}[Identifiability of parameter $k_{pq}$] \label{def:iden-param}
Let $ \mathcal M =(G, \In, \Out, \Leak)$ be a strongly connected, linear compartmental model with at least one input.  Let $G=(V,E)$, and let $\msc :  \R^{|E| + |\Leak|} \rightarrow \R ^m$ denote the coefficient map of $\mathcal M$.
Let $k_{pq}$ be a parameter of $\mathcal M$, i.e., $(q,p)$ is in the index set $\mathcal{P}_{\mathcal{M}}$, in~\eqref{eq:index-set-param}.  
Write each parameter vector 
$(k_{i j})_{(j,i) \in \mathcal{P}_{\mathcal{M}}}$ 
as 
$( k_{pq};~ \widetilde k )$, 
where 
$\widetilde k := (k_{ij})_{(j,i) \in \mathcal{P}_{\mathcal{M}} \smallsetminus \{ (q,p)\}}$; 
this allows us to write the coefficient map as $\msc = \msc( k_{pq};~ \widetilde k)$.

The parameter $k_{pq}$ is: 
    \begin{enumerate}
        \item {\bf globally identifiable} if, 
        for every $ ( k^*_{pq};~ \widetilde k^* ) $ in $ \mathbb{R}^{|E| + |\Leak|}$,
        the following set 
        (which is a projection of a preimage of $\msc( k^*_{pq};~ \widetilde k^*)$)
        has size one:
        \begin{align} \label{eq:set-parameters}
            \{
            k_{pq} \mid
            \mathrm{there~exists~}
            \widetilde{k} \in 
            \mathbb{R}^{|E| + |\Leak|}
            \mathrm{~such~that~}
            \msc( k_{pq};~ \widetilde k) =
            \msc( k^*_{pq};~ \widetilde k^*)
            \}~;
           \end{align}
        \item {\bf generically globally identifiable} if, 
        for almost every
        $ ( k^*_{pq};~ \widetilde k^* ) $ in $ \mathbb{R}^{|E| + |\Leak|}$, the set~\eqref{eq:set-parameters} has size one;
        \item {\bf generically locally identifiable}
        if, 
        for almost every
        $ ( k^*_{pq};~ \widetilde k^* ) $ in $ \mathbb{R}^{|E| + |\Leak|}$, 
        the set~\eqref{eq:set-parameters} is finite; and
\item {\bf unidentifiable} if $k_{pq}$ is \uline{not} generically locally identifiable.
    \end{enumerate}
\end{definition}

\begin{notation} \label{notation:sling}
Following 
    Barreiro and Villaverde, 
we use {\bf SLING} (which stands for ``structurally locally identifiable but not globally'') to refer to a parameter that is generically locally identifiable, but {\it not} generically globally identifiable~\cite{sling}.
\end{notation}

\begin{remark}[Identifiability of models versus parameters] \label{rem:gli-param}
    A model is generically locally identifiable 
    (Definition~\ref{def:ident}(1))
    if and only if all of its parameters are generically locally identifiable (Definition~\ref{def:iden-param}(3)); cf.~\cite[Remark~2.6(a)]{HOPY}.  
    Equivalently, a model is unidentifiable if and only if at least one of its parameters is unidentifiable.
\end{remark}

\begin{remark}[Software for checking identifiability] \label{rem:sian}
Identifiability of parameters can be checked using software.  One option is
the {\tt Maple} software {\tt SIAN} (Structural Identifiability ANalyser)~\cite{SIAN}, 
which is based on theory developed by 
Hong, Ovchinnikov, Pogudin, and Yap~\cite{HOPY}. 
(This theory is developed over $\mathbb{C}$, whereas we work over $\mathbb{R}$; hence, users of {\tt SIAN} need to be careful when interpreting the output.) 
Another option is the {\tt Julia} package {\tt StructuralIdentifiability.jl}, which implements theory of 
Dong, Goodbrake, Harrington, and Pogudin~\cite{julia-software}.  
\end{remark}

\begin{remark}[Restricting parameter values]
    A meaningful extension of Definition~\ref{def:iden-param} would be to
    include, as part of the setup, a subset of the full parameter space $\mathbb{R}^{|E| + |Leak|}$ (or subsets of $\mathbb{R}$, one for each parameter).  The reason would be to allow a researcher to take into account known information about parameter values (positivity, for instance, or restriction to an interval).  Here, however, for simplicity, we do not incorporate this addition.
\end{remark}

\subsubsection{Identifiable functions} 
One of our primary methods for proving that certain parameters are generically globally identifiable or generically locally identifiable is through the theory of ``identifiable functions'' (see Definition~\ref{def:identifiable-function}
and Proposition~\ref{prop:polynom-to-iden} below).
This approach makes precise the idea that identifiable parameters are the ones that we can ``solve for'' in terms of the coordinates of the coefficient map.

We follow the notation of Meshkat, Rosen, and Sullivant~\cite{Meshkat-Rosen-Sullivant} in what follows.

\begin{definition}
    \label{def:identifiable-function}
    Let $\msc:\mathbb{R}^{\lvert E\rvert + \lvert \Leak\rvert} \rightarrow \mathbb{R}^m$ be the coefficient map of a linear compartmental model, and let $f:\mathbb{R}^{\lvert E\rvert + \lvert \Leak\rvert} \rightarrow \mathbb{R}$ be some other function. The function $f$ is:
\begin{itemize}
    \item an \textbf{identifiable function} from $\msc$ if, for all $p,p'\in \mathbb{R}^{\lvert E\rvert + \lvert \Leak\rvert}$, we have that $\msc(p)=\msc(p')$ implies that $f(p)=f(p')$.

    \item a \textbf{generically identifiable function} from $\msc$ if $f$ is identifiable from $\msc$ on some open, dense subset $U\subseteq \mathbb{R}^{\lvert E\rvert + \lvert \Leak\rvert}$.


    \item a \textbf{locally identifiable function} from $\msc$ if there is an open, dense subset $U\subseteq \mathbb{R}^{\lvert E\rvert + \lvert \Leak\rvert}$ such that, for all $p\in U$, there is an open neighborhood $U_p$ of $p$ such that $f$ is identifiable from $\msc$ on $U_p$.
\end{itemize} 

\end{definition}

\begin{remark}[Identifiable parameters and functions]
    \label{rem:iden-fnc-vs-param}
    The relationship between Definitions~\ref{def:iden-param} and~\ref{def:identifiable-function} is as follows.
    For a parameter $k_{ij}$ 
    of a model with coefficient map $\msc$, if the function $f=k_{ij}$ is identifiable (respectively, generically identifiable or locally identifiable), then the parameter $k_{ij}$ is globally identifiable (respectively, generically globally identifiable or generically locally identifiable).
    \end{remark}
The following proposition, which is similar in spirit to~\cite[Proposition~4.4]{Meshkat-Rosen-Sullivant}
(and can be viewed as an instantiation of the ideas in~\cite[Proposition~3.4]{HOPY} and~\cite[Definition~2.5]{multi-experiment}),
will help us prove identifiability properties of parameters:

\begin{proposition}[] \label{prop:polynom-to-iden}
Let $ \mathcal M =(G, \In, \Out, \Leak)$
be
a linear compartmental model with coefficient map $\msc :  \R^{|E| + |\Leak|} \rightarrow \R ^m$.
Let $k_{ij}$ be a parameter of $\mathcal{M}$.  
Suppose that there is a function 
$ g: 
\R \times \R^{|E| + |\Leak|} \rightarrow \R
$, which we write as $g = g(z;(k_{i j})_{(j,i) \in \mathcal{P}_{\mathcal{M}}})$, 
for which:
\begin{enumerate}
    \item for some $d\geq 1$, there exist polynomial
    functions $q_0,q_1,\dots, q_d: \R^{|E| + |\Leak|} \rightarrow \R$
    that are 
    identifiable from $\msc$, such that 
    \[
        g ~=~ q_d z^d + q_{d-1}z^{d-1} + \dots + q_0~,
    \]
    and the leading coefficient $q_d$ is \uline{not} the zero function;
    and 
    \item the function $g|_{z=k_{ij}}: \R^{|E| + |\Leak|} \rightarrow \R$ is the zero function.
    \end{enumerate}
Then the parameter $k_{ij}$ is generically \uline{locally} identifiable.  Moreover, if $d=1$, then the parameter $k_{ij}$ is generically \uline{globally} identifiable.
\end{proposition}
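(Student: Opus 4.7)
The plan is to show that, off a measure-zero locus, the coordinate $k_{ij}$ is pinned to the root set of a fixed univariate polynomial of degree $d$ whose coefficients are determined by the value of the coefficient map, and hence lies in an at-most-$d$-element set (a singleton when $d=1$).

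First I would fix $p^* = (k^*_{ij};\, \widetilde{k}^*)$ and let $p = (k_{ij};\, \widetilde{k})$ be any parameter vector with $\msc(p) = \msc(p^*)$. By hypothesis~(1) together with Definition~\ref{def:identifiable-function}, each $q_\ell$ is identifiable from $\msc$, so $q_\ell(p) = q_\ell(p^*) =: q^*_\ell$ for every $\ell = 0, 1, \ldots, d$. Hypothesis~(2) asserts that substituting $z = k_{ij}$ into $g$ produces the zero function on the full parameter space; applied at $p$ itself, this reads
$$\sum_{\ell=0}^{d} q_\ell(p)\, k_{ij}^{\ell} \;=\; 0,$$
so replacing each $q_\ell(p)$ by $q^*_\ell$ shows that $k_{ij}$ is a root of the polynomial $P_{p^*}(z) := \sum_{\ell=0}^{d} q^*_\ell\, z^\ell$.

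Next, since $q_d$ is a nonzero polynomial function, its real vanishing locus $Z \subseteq \R^{|E|+|\Leak|}$ has Lebesgue measure zero. For every $p^* \notin Z$ the polynomial $P_{p^*}(z)$ has degree exactly $d$, hence at most $d$ real roots, which bounds the projected preimage
$$\{\, k_{ij} \,:\, \exists\, \widetilde{k}\ \text{with}\ \msc(k_{ij};\, \widetilde{k}) = \msc(k^*_{ij};\, \widetilde{k}^*)\,\}$$
above by $d$; this is generic local identifiability of $k_{ij}$ via Definition~\ref{def:iden-param}(3). When $d = 1$, the unique root of $P_{p^*}(z) = q^*_1 z + q^*_0$ is $-q^*_0/q^*_1$, and because $k^*_{ij}$ already lies in the projected preimage (take $p = p^*$), the set must equal $\{k^*_{ij}\}$, giving generic global identifiability. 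I do not anticipate any substantial obstacle: the argument is a direct unpacking of Definitions~\ref{def:iden-param} and~\ref{def:identifiable-function} combined with the elementary fact that a nonzero univariate real polynomial of degree $d$ has at most $d$ roots. The only subtle point is that hypothesis~(1) uses the strongest (``identifiable,'' not merely ``generically identifiable'') variant from Definition~\ref{def:identifiable-function}, which is what allows us to transfer $q_\ell(p) = q_\ell(p^*)$ to the entire fiber rather than to a dense subset thereof.
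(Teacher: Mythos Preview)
Your proposal is correct and follows essentially the same argument as the paper's proof: restrict to the complement of the measure-zero set $\{q_d=0\}$, use identifiability of the $q_\ell$ to transfer their values across the fiber of $\msc$, and conclude that any $k_{ij}$ in the projected preimage is a root of the resulting degree-$d$ univariate polynomial. Your observation about needing the strongest (``identifiable'') hypothesis on the $q_\ell$ is apt and matches how the paper uses it.
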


\begin{proof}
Let $g,~q_0,q_1,\dots, q_d$, and $k_{ij}$ be as in the statement of the proposition.  The set 
$U:=
\R^{|E| + |\Leak|} \smallsetminus \{q_d=0\}$ is an open, dense subset of $\R^{|E| + |\Leak|}$ 
(here we use the fact that the polynomial $q_d$ is nonzero).  In particular, the complement of ${U}$ has measure zero.  

    Fix $(k^*_{ij}; \widetilde{k}^*) \in {U}$ (here, the notation $(k^*_{ij}; \widetilde{k}^*)$ is like that in Definition~\ref{def:iden-param}).  
    By Definition~\ref{def:iden-param}(2), it suffices to show that there are only finitely many values $k^{**}_{ij}$ 
    for which there exist some $ \widetilde{k}^{**} \in \R^{|E| + |\Leak|-1}$ 
    with 
    $
    \msc(k^{**}_{ij}; \widetilde{k}^{**}) =
    \msc(k^*_{ij}; \widetilde{k}^*)$.  

    To this end, recall that $q_0,q_1,\dots, q_d$
     are identifiable from $\msc$.  This fact implies that
    the following holds for all $i=0,1,\dots,d$, whenever $
    \msc(k^{**}_{ij}; \widetilde{k}^{**}) =
    \msc(k^*_{ij}; \widetilde{k}^*)$:
    \begin{align*}
        q_i^* ~&:=~
            q_i|_{(k_{ij}; \widetilde{k})=(k^*_{ij}; \widetilde{k}^*)}
            ~=~
            q_i|_{(k_{ij}; \widetilde{k})=(k^{**}_{ij}; \widetilde{k}^{**)}}~.
    \end{align*}
    Now recall that $g|_{z=k_{ij}}$ is the zero function (by assumption).  It now follows that, if $
    \msc(k^{**}_{ij}; \widetilde{k}^{**}) =
    \msc(k^*_{ij}; \widetilde{k}^*)$, then $k^{**}_{ij}$ is a root of the following univariate polynomial (which is nonzero because we constructed 
   ${U}$ to be disjoint from $\{q_d = 0\}$):
   \[
   g|_{(k_{ij}; \widetilde{k})=(k^*_{ij}; \widetilde{k}^*)}
   ~=~
   q^*_d z^d + q^*_{d-1}z^{d-1} + \dots + q^*_0~.
   \]
    Thus, $k^{**}_{ij}$ can take at most $d$ values, and so we conclude that the parameter $k_{ij}$ is generically locally identifiable.  Additionally, if $d=1$, then there is only one possible value (namely, $k^{**}_{ij} = k^{*}_{ij}$) and so, in this case, the parameter $k_{ij}$ is generically globally identifiable.
    \end{proof}

\begin{example}[Example~\ref{ex:running-ex-iden-of-model}, continued] \label{ex:mammillary-solve}
We return to the model in Figure~\ref{fig:mammillary} and its coefficient map~\eqref{eq:coeff-running-example}.  We saw that $\msc_4= k_{21}$, and so the parameter $k_{21}$ is easily recovered from the coefficients $\msc_i$.  More precisely, $k_{21}$ is an identifiable function from $\msc$, so 
Remark~\ref{rem:iden-fnc-vs-param} implies that $k_{21}$ is \sout{generically} globally identifiable.  
Alternatively, we can draw 
a slightly weaker
conclusion by noting that $k_{21}$ satisfies the hypotheses of 
Proposition~\ref{prop:polynom-to-iden} with $g:=-k_{21}+z$ (so, $q_0=-k_{21}$ and $q_1=1$), and hence $k_{21}$ is generically globally identifiable.  
It turns out that no other parameter of this model is generically globally identifiable; in fact, all remaining parameters are SLING.  This fact can be proven directly or by examining the symmetry in the model (see Example~\ref{ex:symmetric-edge} below). 
%
%
%
\end{example}

In Example~\ref{ex:mammillary-solve}, we saw that one of the coefficients (namely, $\msc_4$) equals the parameter $k_{21}$.  This parameter corresponds to the edge from the input (at compartment-$1$) to the output (at compartment-$2$). This observation generalizes, as follows. 

\begin{lemma}[Global identifiability of edge from input to output] \label{lem:edge-in-to-out-global}
    Let  $ \mathcal M =(G, \In, \Out, \Leak)$ be a strongly connected, linear compartmental model with $\In=\{i\}$, $\Out=\{j\}$, and $\Leak = \varnothing$.  If $i \to j$ is an edge of $G$ (with parameter $k_{ji}$), then $k_{ji}$ is 
    globally identifiable.
\end{lemma}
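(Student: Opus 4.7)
The plan is to show that $k_{ji}$ appears literally as one of the coefficients of the input-output equation, which would make it an identifiable function of $\msc$ and hence, by Remark~\ref{rem:iden-fnc-vs-param}, globally identifiable.

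To locate such a coefficient, I will apply Proposition~\ref{prop:coeff-formula} (with the roles of its indices $i,j$ swapped to match the lemma's conventions: here $\In=\{i\}$, $\Out=\{j\}$, so the relevant auxiliary graph is $G_j^*$, obtained from $G$ by deleting all outgoing edges from the output vertex $j$, and the relevant forests are those in $\mathcal{F}_{n-k-1}^{i,j}(G_j^*)$, i.e., spanning incoming forests of $G_j^*$ with exactly $n-k-1$ edges in which vertices $i$ and $j$ lie in the same connected component). The natural candidate is the coefficient $d_{n-2}$, which is a sum over spanning incoming forests of $G_j^*$ with exactly $1$ edge in which $i$ and $j$ are in the same component.

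The main step is the combinatorial observation that there is exactly one such forest. A $1$-edge spanning subgraph consists of one edge plus $n-2$ isolated vertices, so $i$ and $j$ share a connected component if and only if the single edge is incident to both of them. The only directed edges in $G$ between $i$ and $j$ are $i \to j$ (which exists by hypothesis, with label $k_{ji}$) and, potentially, $j \to i$; but the latter is outgoing from $j$ and is therefore deleted when forming $G_j^*$. Hence the unique contributing forest $F$ consists of the single edge $i \to j$, giving $\pi_F = k_{ji}$. Thus $d_{n-2} = k_{ji}$, so $k_{ji}$ is a coordinate of $\msc$ and the function $f = k_{ji}$ is trivially identifiable from $\msc$.

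I do not foresee any serious obstacle; the only mildly delicate points are (a) keeping the index conventions straight, since the lemma swaps the roles of $(i,j)$ relative to Proposition~\ref{prop:coeff-formula}, and (b) the $n=2$ case, which is not covered by the $n \geq 3$ hypothesis of Proposition~\ref{prop:coeff-formula} and which I would dispatch by a direct computation of $\det[(\partial I - A)^{j,i}]$ from Proposition~\ref{prop:in-out-equations}, observing that this determinant reduces to $\pm k_{ji}$ when $n=2$ and $i \to j$ is an edge.
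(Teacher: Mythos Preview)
Your proposal is correct and follows essentially the same route as the paper: identify $d_{n-2}=k_{ji}$ via Proposition~\ref{prop:coeff-formula} and conclude by Remark~\ref{rem:iden-fnc-vs-param}. Your version is in fact slightly more careful than the paper's, since you spell out the one-edge forest argument and separately handle the case $n=2$, which lies outside the stated hypothesis $n\geq 3$ of Proposition~\ref{prop:coeff-formula}.
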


\begin{proof}
Proposition~\ref{prop:coeff-formula} implies that one of the coefficients of the input-output equations, namely, $d_{n-2}$, as in~\eqref{eq:eq:i-o-c-and-d}, equals $k_{ji}$. Now the result follows from Remark~\ref{rem:iden-fnc-vs-param}.
\end{proof} 

\begin{remark}
Gogishvili constructed a large database of linear compartmental models, and observed that Lemma~\ref{lem:edge-in-to-out-global} holds for all models in the database~\cite[\S5]{gogishvili-database}. 
\end{remark}

\subsubsection{Symmetric edge parameters} \label{sec:iden-symmetry}
We end this section by asserting that edges that are symmetric share the same identifiability properties (Lemma~\ref{lem:symmetry} below). This simple idea has appeared in prior work (e.g.,~\cite[\S2.2]{gogishvili-database} and~\cite{cobelli-lepschy-romaninjacur}), and we motivate this result through the following example.

\begin{example}[Example~\ref{ex:mammillary-solve}, continued]
\label{ex:symmetric-edge}
    Observe in Figure~\ref{fig:mammillary} the symmetry between compartments $3$ and $4$.  This symmetry tells us that the parameters for edges incident to $3$ (namely, $k_{13}$ and $k_{31}$) and those incident to $4$ (namely, $k_{14}$ and $k_{41}$) are \uline{not} generically globally identifiable, because their values are indistinguishable from those of their symmetric pair.  
    This indistinguishability also can be seen in the coefficients~\eqref{eq:coeff-running-example} of the input-output equation: Each coefficient is invariant under exchanging $k_{13}$ and $k_{14}$
    with $k_{31}$ and $k_{41}$, respectively.  We make these ideas precise through the next definition and lemma.
\end{example}

\begin{definition} \label{def:automorphism}
Let $n$ be a positive integer.
    \begin{enumerate}
        \item An {\bf automorphism} of a directed graph $G=(V,E)$, where $V=\{1,2,\dots, n\}$, is a permutation $\sigma$ of $V$ such that $\sigma(G):= (V, \sigma(E)) = G$, where $\sigma(E):= \{\sigma(e) \mid e \in E \}$ and $\sigma(e):= (\sigma(i),\sigma(j))$, if $e=(i,j) \in E$.
        \item 
        For
        linear compartmental models $ \mathcal M =(G, \In, \Out, \Leak)$ and $ \mathcal M' =(G', \In', \Out', \Leak')$ with $G=G'$, 
        a \textbf{morphism} from $\mathcal M$ to $\mathcal M'$ is an automorphism $\sigma$ of $G$ such that $\sigma(\In) = \In'$, $\sigma(\Out) = \Out',$ and $\sigma(\Leak)=\Leak'$.
        \item An \textbf{automorphism} of a linear compartmental model $\mathcal M$ is a morphism from $\mathcal M$ to~$\mathcal M$.
    \end{enumerate}
\end{definition}

\begin{remark}[Input-output equations under morphisms] \label{rem:morphism-in-out}
Given a morphism from $\mathcal M$ to $\mathcal M'$, it is straightforward to see from definitions that the input-output equations~\eqref{eq:in-out} for $\mathcal M'$ are obtained from the input-output equations for $\mathcal M$ by permuting the subscripts according to $\sigma$ (i.e., we replace 
$k_{ij}$ and $k_{0j}$
by, respectively, $k_{\sigma(i),\sigma(j)}$ and $k_{0 \sigma(j)}$, for relevant indices $i,j$).
\end{remark}

\begin{lemma}[Symmetric edge parameters are SLING or unidentifiable] \label{lem:symmetry}
    Consider a
    linear compartmental model
    $ \mathcal M =(G, \In, \Out, \Leak)$, where $G=(V,E)$. 
    Consider two edges, $e=(q,p) \in E$ and $e'=(q',p') \in E$, with $e \neq e'$.  If there exists an automorphism $\sigma$ of $\mathcal{M}$ such that $\sigma(e)= e'$, then the parameters for $e$ and $e'$ (namely, $k_{pq}$ and $k_{p'q'}$) are
    either 
    both 
    SLING parameters 
    or 
    both unidentifiable parameters 
    of $\mathcal{M}$.
\end{lemma}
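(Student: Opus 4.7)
The plan is to use the automorphism $\sigma$ to construct an involution on the parameter space that preserves the coefficient map, and then read off the two desired conclusions (failure of generic global identifiability, and equivalence of generic local identifiability) from the orbit structure of this involution.

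First, I would translate Remark~\ref{rem:morphism-in-out} into a statement about the coefficient map. The automorphism $\sigma$ of $\mathcal M$ relabels edges, and hence induces a linear permutation $\tau : \R^{|E|+|\Leak|} \to \R^{|E|+|\Leak|}$ on parameter vectors that simply reindexes the coordinates $k_{ij} \mapsto k_{\sigma(i)\sigma(j)}$. Since $\sigma$ sends $\mathcal M$ to itself, Remark~\ref{rem:morphism-in-out} tells us the input-output equation of $\mathcal M$ is invariant under $\tau$, so the coefficient map satisfies $\msc \circ \tau = \msc$. By construction, $\tau$ swaps the coordinates indexed by $e=(q,p)$ and $e'=(q',p')$, and it is a measure-preserving bijection (it is linear with $\det \tau = \pm 1$).

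Next, I would prove that neither $k_{pq}$ nor $k_{p'q'}$ is generically globally identifiable. Fix a generic point $(k^*_{pq}; \widetilde k^*)$ with $k^*_{pq} \neq k^*_{p'q'}$ (a full-measure condition, since $k_{pq}$ and $k_{p'q'}$ are distinct coordinates of the parameter vector). Both this point and its image under $\tau$ lie in the same fiber of $\msc$, and their $(p,q)$-coordinates are $k^*_{pq}$ and $k^*_{p'q'}$, respectively. Hence the set~\eqref{eq:set-parameters} for the parameter $k_{pq}$ contains at least two distinct values, which, by Definition~\ref{def:iden-param}(2), rules out generic global identifiability of $k_{pq}$. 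The same argument applied to the coordinate $(p',q')$ handles $k_{p'q'}$.

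Finally, I would show that $k_{pq}$ and $k_{p'q'}$ share their generic local identifiability status. The bijection $\tau$ preserves every fiber $\msc^{-1}(\msc(p^*))$ setwise, and by construction the projection onto the $(p,q)$-coordinate equals the projection onto the $(p',q')$-coordinate composed with $\tau$. So the set~\eqref{eq:set-parameters} for $k_{pq}$ at a point $p^*$ and the analogous set for $k_{p'q'}$ at $\tau(p^*)$ are equal as subsets of $\R$, and in particular they have the same cardinality. Because $\tau$ sends full-measure subsets to full-measure subsets, this equivalence passes to the ``almost every'' quantifier of Definition~\ref{def:iden-param}(3): $k_{pq}$ is generically locally identifiable if and only if $k_{p'q'}$ is. Combined with the previous paragraph, the two parameters are either both SLING or both unidentifiable.

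The only step that requires genuine care, rather than bookkeeping, is verifying that the relabeling $\tau$ really does leave the coefficient map invariant as a function, not just up to relabeling the output. This is where Remark~\ref{rem:morphism-in-out} is used in full: because $\sigma$ is an automorphism of $\mathcal M$ (not just of $G$), the resulting permutation of input-output-equation coefficients is a permutation \emph{of the same polynomial map}, so the vector-valued map $\msc$ itself is fixed by $\tau$. Once that identity is in hand, the rest of the argument is purely a matter of unwinding Definition~\ref{def:iden-param}.
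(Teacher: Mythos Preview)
Your approach is essentially the same as the paper's: both define the coordinate permutation on parameter space induced by $\sigma$, verify via Remark~\ref{rem:morphism-in-out} that it commutes with $\msc$, and then read off (i) that the two projected preimage sets coincide, and (ii) that each set contains at least two points off the hyperplane $k_{pq}=k_{p'q'}$.

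One small slip: you call $\tau$ an \emph{involution} and say it ``swaps'' the coordinates indexed by $e$ and $e'$, but $\sigma$ need not have order two, so $\tau$ need only send the $e$-slot to the $e'$-slot, not vice versa. This matters in your middle paragraph: with your convention $\pi_{pq}=\pi_{p'q'}\circ\tau$ (which makes your final paragraph work), the $(p,q)$-coordinate of $\tau(k^*)$ is $k^*_{\sigma^{-1}(p)\sigma^{-1}(q)}$, not $k^*_{p'q'}$. The fix is immediate---use $\tau^{-1}(k^*)$ there instead (since $\msc\circ\tau^{-1}=\msc$ as well), or equivalently invoke the automorphism $\sigma^{-1}$; the paper does exactly this when it notes that the reverse containment follows from applying the argument to $\sigma^{-1}$.
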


\begin{proof}
Let $\mathcal{M}$ be a linear compartmental model, with 
automorphism $\sigma$ 
and
edges $e=(q,p)$ and $e'=(q',p')$, as in the statement of the lemma.  
Our first aim is to show that the parameters $k_{pq}$ and $k_{p'q'}$ have the same identifiability properties (as in Definition~\ref{def:iden-param}).  Accordingly, 
consider any parameter vector
${\mathbf k}^* 
\in \mathbb{R}^{|E| + |\Leak|}$. 
It suffices to show that the projected set~\eqref{eq:set-parameters} for the edge $e=(q,p)$ is equal to the corresponding projected set for $e'=(q',p')$.  In fact, we need only show one containment, as the reverse containment will follow by applying the same argument to the inverse morphism $\sigma^{-1}$.

Let $\msc:\mathbb{R}^{|E|+|\Leak|} \to \mathbb{R}^m$ denote the coefficient map of $\mathcal{M}$, and let $\pi_{pq}: \mathbb{R}^{|E|+|\Leak|} \to \mathbb{R}$ (respectively, $\pi_{p'q'}$) denote the projection onto the coordinate $k_{pq}$ (respectively, $k_{p'q'}$).
Now consider the following diagram, where $\widetilde{\sigma}$ permutes the coordinates according to $\sigma$ (the coordinates  
$k_{ij}$ and $k_{0j}$ are sent to, respectively, $k_{\sigma(i),\sigma(j)}$ and $k_{0 \sigma(j)}$, for relevant indices $i,j$):

\begin{center}
    
\begin{tikzcd}
    \mathbb{R}^{|E|+|\Leak|} \arrow[r, "\pi_{pq}"] \arrow[d, "\msc"] \arrow[rd, "\widetilde \sigma"] & \mathbb{R}             \\
    \mathbb{R}^m                       & \mathbb{R}^{|E|+|\Leak|} \arrow[u, "\pi_{p'q'}"'] \arrow[l, "\msc"]
\end{tikzcd}

\end{center}

In this diagram, the upper triangle commutes by construction.  The lower triangle also commutes, by Remark~\ref{rem:morphism-in-out} and the fact that $\sigma$ is an automorphism.  

We claim that the fact that the diagram above commutes, implies that the projected set~\eqref{eq:set-parameters} for $e$, namely, 
$\pi_{pq}(\msc^{-1}(\msc({\mathbf k}^*)))$ 
is contained in the projected set for $e'$, namely, 
$\pi_{p'q'}(\msc^{-1}(\msc({\mathbf k}^*)))$. 
Indeed, this is a straightforward ``diagram chasing'' argument, which we outline as follows.  Given $x \in \pi_{pq}(\msc^{-1}(\msc({\mathbf k}^*)))$, let $\overline{y} := \widetilde{\sigma}(\overline{x})$, where $\pi_{pq}(\overline{x})=x$ and $\overline{x} \in \msc^{-1}(\msc({\mathbf k}^*))$.  The lower commutative triangle is used to show that $\overline{y} \in \msc^{-1}(\msc({\mathbf k}^*))$, while the upper commutative triangle is used to show that $\pi_{p'q'}(\overline{y})=x$.  Hence, $x \in \pi_{p'q'}(\msc^{-1}(\msc({\mathbf k}^*)))$.


To complete the proof, we need only show that the parameter $k_{pq}$ is \uline{not} generically globally identifiable.  Accordingly, let $Z$ denote the hyperplane in $ \mathbb{R}^{|E| + |\Leak|}$ defined by $k_{pq} = k_{p'q'}$ (so, $Z$ is a measure-zero subset of $ \mathbb{R}^{|E| + |\Leak|}$).
Let 
${\mathbf k}^*:= 
(k^*_{i j})_{(j,i) \in \mathcal{P}_{\mathcal{M}}}
\in \mathbb{R}^{|E| + |\Leak|} \smallsetminus Z$.  
Consider 
$\widetilde\sigma({\mathbf k}^*) = 
(k^*_{\sigma(i) \sigma(j)})_{(j,i) \in \mathcal{P}_{\mathcal{M}}}
$, where $\sigma(0):=0$ (to account for leak parameters $k^*_{0j}$).
From the commutative diagram shown earlier in the proof, 
we have 
$\msc({\mathbf k}^*)
= 
\msc(\widetilde \sigma({\mathbf k}^*))$.
Hence, the 
 projected set~\eqref{eq:set-parameters} for $e$, contains both $k^*_{pq}$ and $k^*_{ \sigma(p) \sigma(q)}$ (which are distinct, because ${\mathbf k}^* \notin Z$).
We conclude that, as desired, $k_{qp}$ is \uline{not} generically globally identifiable.
\end{proof}

\section{Results} \label{sec:results}
In this section, we investigate the identifiability of all parameters in mammillary models with one input, one output, and no leaks. 
For ease of notation, we let
$\mathcal{M}_n(i,j)$ denote the $n$-compartment mammillary model $(G, \In, \Out, \Leak)$ with $\In=\{i\}$, $\Out=\{j\}$, and $\Leak=\varnothing$.  For instance, $\mathcal{M}_4(1,2)$ denotes the model shown earlier in Figure~\ref{fig:mammillary}.

Up to symmetry, the mammillary models $\mathcal{M}_n(i,j)$ fall into five families.  These families are defined by the location of the input and the output, so 
we consider the following representatives, one for each of the five families:
\begin{align} \label{eq:5-families}
\mathcal{M}_n(1,1),~ \mathcal{M}_n(1,2), ~\mathcal{M}_n(2,1),~ \mathcal{M}_n(2,2),~ {\rm and}~ \mathcal{M}_n(2,3)~.
\end{align}

\begin{remark} \label{rem:1-family-uniden}
Proposition~\ref{prop:iden-mammillary} implies that, among the models~\eqref{eq:5-families}, 
only the model $\mathcal{M}_n(2,3)$ is unidentifiable or, equivalently, by Remark~\ref{rem:gli-param}, contains unidentifiable parameters. Hence, every parameter in the remaining four models in~\eqref{eq:5-families} is generically locally identifiable (and thus is SLING or generically globally identifiable).
    
\end{remark}

Our main result, which was summarized earlier in Figure \ref{fig:summary}, is as follows.

\begin{theorem}[Main result]
\label{thm:summary} For mammillary models with one input, one output, and no leaks, the parameters have the following identifiability properties:
\begin{itemize}
    \item For all $n \geq 3$, every parameter of $\mathcal{M}_n(1,1)$ is SLING. 
    \item 
    For all $n \geq 4$, every parameter of 
    $\mathcal{M}_n(1,2)$ is SLING, except the parameter $k_{21}$ labeling the edge from the input to output, which is 
    globally identifiable.  
    \item For all $n \geq 4$, every parameter of 
    $\mathcal{M}_n(2,1)$ is SLING, except the parameters $k_{12}$ and $k_{21}$ labeling the edges between the input and output, which are, respectively, globally identifiable and generically globally identifiable.
    \item For all $n \geq 4$, every parameter of 
    $\mathcal{M}_n(2,2)$ is SLING, except the parameters $k_{12}$ and $k_{21}$ labeling the edges between the input and output, which are generically globally identifiable.
    \item For all $n \geq 5$, 
    the parameters 
    of $\mathcal{M}_n(2,3)$
    that label the edges from non-input, non-output peripheral compartments to the central compartment (namely, $k_{14}, k_{15}, \dots, k_{1n}$)  are SLING.
\end{itemize}
\end{theorem}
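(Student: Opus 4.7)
The plan is to handle the five families one at a time. For each model I would first apply Proposition~\ref{prop:coeff-formula} to write every $c_k$ and $d_k$ explicitly: because $G$ is a bidirected star, each spanning incoming forest is specified by a subset $S\subseteq\{2,\dots,n\}$ of ``inward'' edges $i\to 1$ together with an optional outward edge $1\to j$ for $j\notin S$. Consequently $c_k$ and $d_k$ become specific combinations of elementary symmetric polynomials $e_m$ in the $k_{1i}$'s, with the $k_{i1}$'s entering linearly. In particular I would get $d_k=k_{21}\,e_{n-k-2}(k_{13},\dots,k_{1n})$ in $\mathcal{M}_n(1,2)$, $d_k=k_{12}\,e_{n-k-2}(k_{13},\dots,k_{1n})$ in $\mathcal{M}_n(2,1)$, and $d_k=k_{12}\,k_{31}\,e_{n-k-3}(k_{14},\dots,k_{1n})$ in $\mathcal{M}_n(2,3)$, with analogous formulas in the other two cases; these factorizations drive the entire argument.

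For the globally identifiable parameters, Lemma~\ref{lem:edge-in-to-out-global} dispatches $k_{21}$ in $\mathcal{M}_n(1,2)$ and $k_{12}$ in $\mathcal{M}_n(2,1)$ immediately. For the generically globally identifiable parameters $k_{21}$ in $\mathcal{M}_n(2,1)$ and both $k_{12},k_{21}$ in $\mathcal{M}_n(2,2)$, I plan to apply Proposition~\ref{prop:polynom-to-iden} with $d=1$: the $d$-factorization first pins down $k_{12}$ (or an analogous product) and the multiset $\{k_{13},\dots,k_{1n}\}$, after which a specific linear combination of the $c_k$'s (and, for $\mathcal{M}_n(2,2)$, of the remaining $d_k$'s) isolates a linear equation in the target parameter whose leading coefficient is a Vandermonde-type product such as $\prod_{i\ge 3}(k_{12}-k_{1i})$, which is generically nonzero.

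For the SLING parameters, Lemma~\ref{lem:symmetry} does most of the work. In $\mathcal{M}_n(1,1)$ the full symmetric group $S_{n-1}$ on peripherals acts by model automorphisms, so every parameter has a symmetric twin and, together with Proposition~\ref{prop:iden-mammillary}, is SLING. In $\mathcal{M}_n(1,2),\mathcal{M}_n(2,1),\mathcal{M}_n(2,2)$ with $n\ge 4$, the subgroup $S_{n-2}$ permuting the non-input, non-output peripherals gives the same conclusion for all $k_{1i},k_{i1}$ with $i\notin \In\cup\Out$. For the edges $k_{14},\dots,k_{1n}$ in $\mathcal{M}_n(2,3)$ with $n\ge 5$, Lemma~\ref{lem:symmetry} yields only SLING-or-unidentifiable, since Proposition~\ref{prop:iden-mammillary} does not apply; I would upgrade to SLING using Proposition~\ref{prop:polynom-to-iden}, because the $d$-factorization identifies $\{k_{14},\dots,k_{1n}\}$ as a multiset, so each $k_{1i}$ is a root of the degree-$(n-3)$ polynomial $\prod_{j\ge 4}(z-k_{1j})$ whose coefficients are identifiable from $\msc$.

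The main obstacle, and the hardest step, is showing that $k_{12}$ in $\mathcal{M}_n(1,2)$ is SLING: compartment $2$ is the distinguished output, so $k_{12}$ has no symmetric twin and Lemma~\ref{lem:symmetry} does not apply. For generic local identifiability I plan to eliminate the unknowns $k_{31},\dots,k_{n1}$ from the $c_k$-system, using the already-identifiable $k_{21}$ and multiset $\{k_{13},\dots,k_{1n}\}$, to produce a polynomial $g(z)$ in $z=k_{12}$ of degree at least $2$ with identifiable, nonzero leading coefficient, and then invoke Proposition~\ref{prop:polynom-to-iden}. For failure of generic global identifiability I would exhibit, for each $n\ge 4$, a real parameter vector at which $g$ has at least two distinct real roots that each extend (via the residual linear system in the $k_{i1}$'s) to a real parameter vector in the same $\msc$-fiber; by continuity, this propagates to a positive-measure open neighborhood. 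For $n=4$ the parameter vector $(k_{12},k_{13},k_{14},k_{21},k_{31},k_{41})=(1,2,3,4,5,6)$ produces the cubic $z^3-17z^2+49z-33=(z-1)(z^2-16z+33)$ with three real roots $1,\,8\pm\sqrt{31}$, demonstrating the required failure. The technical difficulty is to produce and verify such a family of examples uniformly in $n$, since both the degree and explicit form of $g$ depend nontrivially on $n$, and to check that the real roots remain real-extendable via the residual linear system for all $n\ge 4$.
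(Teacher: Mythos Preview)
Your plan matches the paper's proof almost step for step: the same symmetry arguments via Lemma~\ref{lem:symmetry} and Proposition~\ref{prop:iden-mammillary}, the same $d$-factorizations, the same use of Proposition~\ref{prop:polynom-to-iden} for the generically globally identifiable parameters (the paper's treatment of $\mathcal{M}_n(2,2)$ is in fact simpler than you sketch---just the two differences $c_{n-1}-d_{n-2}=k_{12}$ and $c_{n-2}-d_{n-3}=k_{12}(d_{n-2}-k_{21})$ suffice), and the same degree-$(n-1)$ polynomial in $z=k_{12}$ for the SLING status of $k_{12}$ in $\mathcal{M}_n(1,2)$.

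For the obstacle you flag, the paper resolves it uniformly in $n$ by specializing to a parameter vector $\mathbf{k}^\bullet$ with $k^\bullet_{31}=\dots=k^\bullet_{n1}=0$ and distinct incoming values $k^\bullet_{12},k^\bullet_{13},\dots,k^\bullet_{1n}$: at this point the polynomial factors completely as $\prod_{i=2}^{n}(z-k^\bullet_{1i})$, giving $n-1$ distinct real roots for every $n$. Continuity of roots then yields the open neighborhood, and the residual linear system in $k_{31},\dots,k_{n1}$ is solved explicitly via the matrix $M$ of Proposition~\ref{prop:LHS-coefficients-updated}, whose determinant is the Vandermonde product $\pm\prod_{3\le j<\ell\le n}(k_{1j}-k_{1\ell})$ and hence generically nonzero---so each alternative root extends to a real parameter vector in the same fiber, exactly as you anticipated.
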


The remainder of this section is dedicated to proving Theorem~\ref{thm:summary} (see Propositions~\ref{prop:1-1}, 
\ref{prop:1-2},
\ref{prop:2-1},
\ref{prop:2-2a},
and
\ref{prop:2-3} below).  
We also state a conjecture concerning model $\mathcal{M}_n(2,3)$
that, if proven, would elevate Theorem~\ref{thm:summary} to match what is depicted in Figure~\ref{fig:summary} (see Conjecture~\ref{conj:2-3}).  This conjecture is supported by examples we computed using software (see Remark~\ref{rem:sian}).

\begin{remark} \label{rem:in-to-out-edge}
    In Theorem~\ref{thm:summary}, 
    we see each of the models
    $\mathcal{M}_n(1,2)$ and $\mathcal{M}_n(2,1)$ has one parameter that is globally identifiable (which is stronger than being \uline{generically} globally identifiable).  This global identifiability comes from the fact that such a parameter corresponds to the edge from input to output (recall Lemma~\ref{lem:edge-in-to-out-global}).
\end{remark}

\subsection{Preliminaries} \label{sec:prelim}
All five model families~\eqref{eq:5-families} share
the same compartmental matrix $A$ (for a given number of compartments $n$).
Thus, the set of coefficients on the left-hand side of the input-output equation~\eqref{eq:in-out} also is the same.  Accordingly, we give a formula for these coefficients in this subsection (Proposition~\ref{prop:LHS-coefficients-updated} below).
Appearing in this formula (and other formulas for coefficients we see later in this section) are elementary symmetric polynomials.

\begin{definition} \label{def:elem-sym-poly}
Let
$k$ and $n$ be positive integers, with $1 \leq k \leq n$.
Let $X_n$ denote the set of variables $x_1, x_2,\hdots, x_n$. 
The $k$-th \textbf{elementary symmetric polynomial} on $X_n$, which we denote by $e_k(X_n)$, is given by
\begin{align*}
e_k(X_n) ~:=~ \sum_{\substack{I \subseteq [n] \\ |I| = k}} 
    \left( \prod_{i\in I} x_i\right) ~.
\end{align*}
We additionally adopt the following standard convention: 
$e_0(X_n):=1$.
\end{definition}

\begin{example}[Example~\ref{ex:mammillary-solve}, continued] 
    \label{ex:elem-sym-poly}
    We revisit the mammillary model 
    $\mathcal{M}_4(1,2)$ in Figure~\ref{fig:mammillary}. 
    The left-hand side coefficients of the input-output equation (see~\eqref{eq:coeff-running-example}) can be rewritten in terms of elementary symmetric polynomials, as follows:
    \begin{align}
        \label{eq:LHS-coeff-elementary-example}
        \notag
        c_3 ~&=~ \msc_1 ~=~
            e_1(\mathcal{I})
            ~+~
            k_{21} 
            +
            k_{31} 
            +
            k_{41} ~,        
        \\
        c_2 ~&=~ \msc_2 ~=~
            e_2(\mathcal{I})
            ~+~
            k_{21} e_1(\mathcal{I} \smallsetminus \{k_{12}\} )
            +
            k_{31} e_1(\mathcal{I} \smallsetminus \{k_{13}\})
            +
            k_{41} e_1(\mathcal{I} \smallsetminus \{k_{14}\})        \\
        \notag
        c_1 ~&=~ \msc_3 ~=~ 
            e_3(\mathcal{I})
            ~+~
            k_{21} e_2(\mathcal{I} \smallsetminus \{k_{12}\} )
            +
            k_{31} e_2(\mathcal{I} \smallsetminus \{k_{13}\})
            +
            k_{41} e_2(\mathcal{I} \smallsetminus \{k_{14}\})~,
            \end{align}
    where 
    $\mathcal{I} := \{k_{12},k_{13} , k_{14}  \} $ is the set of all ``incoming'' edge labels (that is, labels of edges of the graph $G$ in Figure~\ref{fig:original-graph-G} that are directed toward the central compartment).  
\end{example}

Example~\ref{ex:elem-sym-poly} displayed formulas~\eqref{eq:LHS-coeff-elementary-example} for the left-hand side coefficients for $\mathcal{M}_4(1,2)$.  We give a new formula for these coefficients, using subgraphs of the graph in Figure~\ref{fig:graph-G-tilde}, as follows. 

\begin{figure}
\begin{center}
	\begin{tikzpicture}[scale=1.7]
\draw (3,0) circle (0.2);
 	\draw (3.7,-1.2) circle (0.2);
 	\draw (4.4,-0.3) circle (0.2);
 	\draw (4 ,.8) circle (0.2);
    	\node[] at (3, 0) {1};
    	\node[] at (3.7, -1.2) {2};
     \node[] at (4.2, 0.4) {$\vdots$};
    	\node[] at (4.4, -0.3) {3};
    	\node[] at (4, .8) {$n$};
	 \draw[->] (3.25, -0.05) -- (4.1, -0.3);
	 \draw[<-] (3.25, 0.05) -- (4.1, -0.2);
	 \draw[<-] (3.1, 0.2) -- (3.7, .65);
	 \draw[->] (3.2, .15) -- (3.8, .6);
   	 \node[] at (3.9, .05) {$k_{13}$};
   	 \node[] at (3.25, 0.7) {$k_{1n}$};
   	 \node[] at (3.8, -0.35) {$k_{31}$};
   	 \node[] at (3.9, 0.35) {$k_{n1}$};
	\end{tikzpicture}
\end{center}
\caption{Graph obtained from the star graph in Figure~\ref{fig:original-graph-G} by removing the pair of edges between compartment-$1$ and compartment-$2$.}
    \label{fig:graph-G-tilde}
\end{figure}
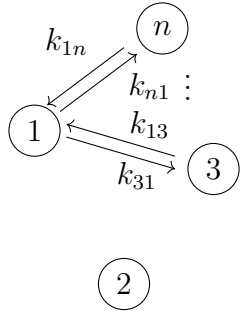

\begin{example}[Example~\ref{ex:elem-sym-poly}, continued] 
    \label{ex:lhs-written-again}
    For the model
    $\mathcal{M}_4(1,2)$, the left-hand side coefficients of the input-output equation (shown in~\eqref{eq:coeff-running-example} and~\eqref{eq:LHS-coeff-elementary-example}) can be rewritten by grouping terms based on whether they contain $k_{21}$, contain $k_{12}$, or contain neither $k_{21}$ nor $k_{12}$:
    \begin{align}
        \label{eq:LHS-coeff-preview}
        \notag
        c_3 ~&=~ k_{21} + k_{12} + g_1
        \\
        c_{2} ~&=~ 
        k_{21}~e_{1}(\Sigma) + k_{12} g_1 + g_2
        \\    
        c_{1} ~&=~ 
            k_{21}~e_{2}(\Sigma) + k_{12} g_2~,
        \notag
    \end{align}
where 
\begin{align*}
\Sigma~:=~\{k_{13},k_{14}\}~,
\quad 
g_1~:=~k_{13} + k_{14} + k_{31} + k_{41}~,
\quad 
{\rm and}
\quad 
g_2~:=~ k_{13}k_{14} + k_{13}k_{41} + k_{14}k_{31}~.
\end{align*}
Notice that $g_1$ is the sum over all edges in the $n=4$ version of the graph in Figure~\ref{fig:graph-G-tilde}, and $g_2$ is the sum over pairs of edges in Figure~\ref{fig:graph-G-tilde} that do not form a cycle.
Also, it is straightforward to deduce the following equation from the equations~\eqref{eq:LHS-coeff-preview}:
    \begin{align}
        \label{eq:n=4-big-sum}
        k_{12}^2 c_3
        -
        k_{12} c_2
        +
        c_1
        ~=~
        (k_{12}^2 - k_{12} e_1(\Sigma) + e_2(\Sigma) ) k_{21}  + k_{12}^3~.
    \end{align}
Next, alternate formulas for $g_1$ and $g_2$ are given by:
\begin{equation} \label{eq:formula-for-g-example}
\begin{pNiceArray}{c}
g_1 \\
g_2 
\end{pNiceArray}
    ~=~ 
\begin{pNiceArray}{c}
e_1(\Sigma) \\
e_2(\Sigma) 
\end{pNiceArray}
+
\begin{pNiceArray}{cc}
1 & 1\\
k_{14} & k_{13}
\end{pNiceArray}
\begin{pNiceArray}{c}
k_{31} \\
k_{41} 
\end{pNiceArray}~.
\end{equation}
The formulas~\eqref{eq:LHS-coeff-preview} and equations~\eqref{eq:n=4-big-sum}--\eqref{eq:formula-for-g-example} generalize 
to what is shown in~\eqref{eq:lhs-updated},~\eqref{eq:big-sum}, and~\eqref{eq:formula-for-g}, respectively, in the next result.
\end{example}

\begin{proposition}[Left-hand side coefficients]
    \label{prop:LHS-coefficients-updated}
    Let  $n \geq 3$. 
    Consider an $n$-compartment mammillary model $\mathcal{M} =  (G, \In, \Out, \Leak)$ with $\lvert \In \rvert = \lvert \Out \rvert = 1$ and $\Leak =\varnothing$. 
    Let $c_0,c_1,\dots, c_{n-1}$ denote the coefficients of the left-hand side of the input-output equation, as in~\eqref{eq:eq:i-o-c-and-d}.  Then the following hold:
    \begin{itemize}
        \item 
    $c_0=0$, and the 
    coefficients $c_1,c_2,\dots, c_{n-1}$ are given by the following formulas:
\begin{align} \label{eq:lhs-updated}
    \notag
c_{n-1} ~&=~ 
        k_{21} + k_{12} g_0 + g_1\\
    \notag
    c_{n-2} ~&=~ 
        k_{21}~e_{1}(\Sigma) + k_{12} g_1 + g_2\\    
    c_{n-3} ~&=~ 
            k_{21}~e_{2}(\Sigma) + k_{12} g_2 + g_3\\    
    \notag
    &~ \vdots \\
    \notag
    c_{1} ~&=~ 
            k_{21}~e_{n-2}(\Sigma) + k_{12} g_{n-2} + g_{n-1} ~,
\end{align}
where $\Sigma:=\{k_{13}, k_{14}, \dots, k_{1n} \}$ and, for $i=0,1,2,\dots, n-1$, we denote:
    \begin{align} \label{eq:g_i-equation}
    g_i ~:=~
    \sum_{F \mathrm{~is~an~}
        i\mathrm{-edge~spanning,~incoming~forest~of~} \widetilde{G}
        } \pi_F~,
    \end{align}
    where $\widetilde{G}$ denotes the graph in Figure~\ref{fig:graph-G-tilde}. 
    \item The following equation holds:
    \begin{align} \label{eq:big-sum}
        \notag
        & k_{12}^{n-2} c_{n-1}
        -
        k_{12}^{n-3} c_{n-2}
        +
        k_{12}^{n-4} c_{n-3}
        -
        \dots
        \pm
         c_{1} 
        \\
        & \quad\quad  ~=~
        \left(
        k_{12}^{n-2}
        -
        k_{12}^{n-3}~ e_1(\Sigma)
        +
        k_{12}^{n-4}~ e_2(\Sigma)
        -
        \dots
        \pm
         e_{n-2}(\Sigma)
        \right) k_{21}
        ~+~
        k_{12}^{n-1}~.
    \end{align}
    \item The terms $g_i$ are given by $g_0=1$, $g_{n-1}=0$, and the following formula: 
\begin{equation} \label{eq:formula-for-g}
\begin{pNiceArray}{c}
g_1 \\
g_2 \\
\vdots \\
g_{n-2}
\end{pNiceArray}
    ~=~ 
\begin{pNiceArray}{c}
e_1(\Sigma) \\
e_2(\Sigma) \\
\vdots \\
e_{n-2}(\Sigma)
\end{pNiceArray}
+
M
\begin{pNiceArray}{c}
k_{31} \\
k_{41} \\
\vdots \\
k_{n1}
\end{pNiceArray}~,
\end{equation}
%
where $\Sigma:=\{k_{13},k_{14},\dots,k_{1n}\}$ is the set of all ``incoming'' edge labels of $\widetilde{G}$ (the graph in Figure~\ref{fig:graph-G-tilde}), and where $M$ denotes the following $(n-2) \times (n-2)$ matrix:
\begin{equation} \label{eq:matrix-for-G-tilde}
    M~=~
    \begin{pNiceArray}{cccc}
    1 & 1 & \dots & 1
    \\
e_{1}(\Sigma \smallsetminus \{{k}_{13}\}) & 
    e_{1}(\Sigma \smallsetminus \{{k}_{14}\}) & 
    \dots
    &
    e_{1}(\Sigma \smallsetminus \{{k}_{1n}\}) \\
e_{2}(\Sigma \smallsetminus \{{k}_{13}\}) & 
    e_{2}(\Sigma \smallsetminus \{{k}_{14}\}) & 
    \dots
    &
    e_{2}(\Sigma \smallsetminus \{{k}_{1n}\}) \\
\vdots & \vdots & \ddots & \vdots \\
e_{n-3}(\Sigma \smallsetminus \{{k}_{13} \}) & 
    e_{n-3}(\Sigma \smallsetminus \{{k}_{14}\}) & 
    \dots
    &
    e_{n-3}(\Sigma \smallsetminus \{{k}_{1n}\}) 
\end{pNiceArray}~.
\end{equation}
    \item The determinant of this matrix $M$ is, up to sign, the {\em Vandermonde polynomial} $\prod_{3 \leq j < \ell \leq n} (k_{1j}- k_{1 \ell})$.
    \end{itemize}
\end{proposition}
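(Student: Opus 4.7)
The plan is to handle the four bullets in turn: Parts~1 and~3 are combinatorial applications of Proposition~\ref{prop:coeff-formula}, while Parts~2 and~4 are algebraic consequences of them. For Part~1, Proposition~\ref{prop:coeff-formula} writes $c_{n-k}$ as a sum of $\pi_F$ over all $k$-edge spanning incoming forests $F$ of $G$. I would partition these into three disjoint classes based on which edge(s) of the 2-cycle $1 \leftrightarrows 2$ appear in $F$. Since vertex~$1$ has at most one outgoing edge in any spanning incoming forest, including $1 \to 2$ forces the remaining $k-1$ edges all to be of the form $j \to 1$ (with parameters in $\Sigma$), contributing $k_{21}\, e_{k-1}(\Sigma)$. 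Including $2 \to 1$ but not $1 \to 2$ reduces the remaining choice to a $(k-1)$-edge spanning incoming forest of $\widetilde{G}$, contributing $k_{12}\, g_{k-1}$. Excluding both edges reduces to a $k$-edge spanning incoming forest of $\widetilde{G}$, contributing $g_k$. The claim $c_0 = 0$ is Remark~\ref{rem:c0=0}.

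For Part~2, I would multiply the equation for $c_{n-k}$ in \eqref{eq:lhs-updated} by $(-1)^{k-1} k_{12}^{n-1-k}$ and sum over $k = 1, \ldots, n-1$. After reindexing, the contributions involving $g$-terms telescope: each $g_j$ for $1 \leq j \leq n-2$ appears with canceling signs, leaving only the boundary contribution $k_{12}^{n-1} g_0 = k_{12}^{n-1}$ (and the $g_{n-1}$ term vanishes by Part~3). The $k_{21}$-contributions assemble into $k_{21}$ times the stated alternating polynomial in $k_{12}$ and the $e_j(\Sigma)$. For Part~3, the equalities $g_0 = 1$ and $g_{n-1} = 0$ follow from the empty-product convention and the fact that vertex~$2$ is isolated in $\widetilde{G}$ (so no spanning incoming forest can have more than $n-2$ edges). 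For the matrix formula~\eqref{eq:formula-for-g}, I would classify each spanning incoming forest of $\widetilde{G}$ by the (at most one) outgoing edge from vertex~$1$. If none is present, $F$ is a subset of $\{j \to 1 : j \in \{3, \ldots, n\}\}$, contributing $e_i(\Sigma)$ to $g_i$. If $1 \to j^*$ is in $F$, then acyclicity excludes $j^* \to 1$, and the remaining $i-1$ edges are chosen from the other $n-3$ inward edges, contributing $k_{j^*1}\, e_{i-1}(\Sigma \smallsetminus \{k_{1 j^*}\})$; summing over $j^*$ matches the matrix product $M \cdot (k_{31}, \ldots, k_{n1})^T$.

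For Part~4, the key insight is a factorization $M = T V$. Setting $x_j := k_{1,j+2}$ and iterating the recursion $e_k(\Sigma \smallsetminus \{x_j\}) = e_k(\Sigma) - x_j\, e_{k-1}(\Sigma \smallsetminus \{x_j\})$ gives
\[
    e_{i-1}(\Sigma \smallsetminus \{x_j\}) ~=~ \sum_{\ell=1}^{i} (-x_j)^{\ell-1}\, e_{i-\ell}(\Sigma),
\]
which exhibits $M_{ij}$ as the $(i,j)$-entry of $T V$, with $T_{i\ell} := e_{i-\ell}(\Sigma)$ for $\ell \leq i$ (and $0$ otherwise) and $V_{\ell j} := (-x_j)^{\ell-1}$. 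Since $e_0(\Sigma) = 1$, the matrix $T$ is unit lower triangular with $\det T = 1$, and $V$ is the Vandermonde matrix in $-x_1, \ldots, -x_{n-2}$, whose determinant equals $\prod_{1 \leq a < b \leq n-2}(x_a - x_b)$; substituting $x_a = k_{1,a+2}$ recovers the stated product up to sign. I expect the main obstacle to be spotting this clean $TV$ factorization; once it is in hand, Parts~1 and~3 amount to standard case analyses on spanning incoming forests and Part~2 follows by mechanical telescoping.
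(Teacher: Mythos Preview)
Your proposal is correct. Parts~1--3 mirror the paper's proof essentially line for line: the same three-way partition of spanning incoming forests of $G$ (by which edges of the $2$-cycle $1\leftrightarrows 2$ are present) for Part~1; the same telescoping of the $g$-terms, with the boundary values $g_0=1$ and $g_{n-1}=0$ supplying the surviving $k_{12}^{n-1}$, for Part~2; and the same classification of forests of $\widetilde G$ by whether vertex~$1$ has an outgoing edge for Part~3.

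The one genuine difference is Part~4. The paper does not prove the determinant formula; it simply cites a known computation (equation~(6) in the proof of \cite[Theorem~6.1]{singularlocus}). Your argument is self-contained: the recursion $e_k(\Sigma\smallsetminus\{x_j\}) = e_k(\Sigma) - x_j\,e_{k-1}(\Sigma\smallsetminus\{x_j\})$ unrolls to the identity $e_{i-1}(\Sigma\smallsetminus\{x_j\}) = \sum_{\ell=1}^{i}(-x_j)^{\ell-1}e_{i-\ell}(\Sigma)$, giving the factorization $M=TV$ with $T$ unit lower triangular and $V$ Vandermonde in the $-x_j$. This buys independence from the external reference and makes the sign tracking explicit; the paper's approach is shorter but relies on the reader consulting \cite{singularlocus}. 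Both are valid, and your route would make the proposition fully self-contained.
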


\begin{proof}
    The equality $c_0=0$ was given earlier (Remark~\ref{rem:c0=0}).    Next, for $i=1,2,\dots, n-1$, recall from 
Proposition~\ref{prop:coeff-formula} that 
the coefficient $c_i$ is the sum of all terms $\pi_F$, where $F$ is 
    a spanning incoming forest that is an $(n-i)$-edge subgraph of the star graph $G$ (in Figure~\ref{fig:original-graph-G}).  Such a forest can not contain both of the edges $k_{21}$ and $k_{12}$ (to avoid a cycle), so each $c_i$ is a sum over three types of forests:
    \begin{itemize}
        \item \uline{forests involving $k_{21}$ (but not $k_{12}$)} -- in this case, as $k_{21}$ is outgoing from compartment-$1$, the remaining edges must be chosen from the ``incoming'' edges (those directed toward compartment-$1$) from $3, 4, \dots, n$ (and can be arbitrarily chosen); these edges are indexed by the set $\Sigma$;
        \item \uline{forests involving $k_{12}$ (but not $k_{21}$)} -- in this case, the edge $k_{12}$ can be combined with any set of edges of $\widetilde G$ that do not contain a cycle;
        \item \uline{forests involving neither $k_{12}$ nor $k_{21}$} -- these are spanning, incoming forests of $\widetilde{G}$. 
    \end{itemize}
Viewing each $c_i$ as such a sum exactly yields the formulas~\eqref{eq:lhs-updated}, as claimed.

Next, we note that
\begin{align} \label{eq:g-boundary-conditions}
g_0~=~1 \quad \quad  {\rm and} \quad \quad g_{n-1}=0~,    
\end{align}
because (1) we have $\pi_F=1$ for the subgraph $F$ with no edges, and (2) spanning, incoming forests of $\widetilde{G}$ have at most $n-2$ edges (and hence none have $n-1$ edges).

Next, it is straightforward to use the equations~\eqref{eq:lhs-updated} and~\eqref{eq:g-boundary-conditions} to obtain the equation~\eqref{eq:big-sum}.
To provide some detail, we use~\eqref{eq:lhs-updated} to obtain expressions for $k_{12}^{n-2} c_{n-1}$, $
        k_{12}^{n-3} c_{n-2}$, 
        \dots ,
        $
         c_{1} $, and then we observe that 
the terms involving $g_i$'s ``telescope'' as they are added together, so that subsequently what remains of these terms is $k_{12}^{n-1}g_0 + g_{n-1} $, which by~\eqref{eq:g-boundary-conditions}, equals $k_{12}^{n-1}$.

We next prove the formula for the $g_i$'s given in~\eqref{eq:formula-for-g}.  Recall that $g_i$ is the sum of all terms $\pi_F$, where $F$ is 
    a spanning incoming forest that is an $i$-edge subgraph of the star graph $\widetilde G$ in Figure~\ref{fig:graph-G-tilde}.  By definition, such forests contain at most $1$ ``outgoing'' edge (i.e., an edge of the form $1 \to j$).  Now it is straightforward to check that  $e_{i}(\Sigma)$ is the subsum of $g_i$ corresponding to forests with no ``outgoing'' edges, and the remaining terms on the right-hand side of~\eqref{eq:formula-for-g} form the subsum corresponding to forests with exactly $1$ ``outgoing'' edge (a similar idea underlies the enumeration of spanning incoming forests shown in Figure~\ref{fig:spanning-inc-forest-G}).  

Finally, the determinant of the matrix $M$ is well known (see equation ~(6) in the proof of~\cite[Theorem~6.1]{singularlocus}).
\end{proof}

We end this subsection by recalling a useful fact about elementary symmetric polynomials; this result is well known (cf.~\cite[Proof of Theorem 6.3]{singularlocus}).
 
\begin{lemma} \label{lem:deg-elem-sym}
   Consider the following map, which evaluates a vector $(x_1,x_2,\dots,x_n)$ at all elementary symmetric polynomials on the set $\{x_1,x_2,\dots,x_n\}$:
    \begin{align*}
   \mathbb{R}^n ~&\to~ \mathbb{R}^n 
\\
(x_1,x_2,\dots,x_n) ~&\mapsto~
\left( e_1(\{x_1,x_2,\dots,x_n\}),~ e_2(\{x_1,x_2,\dots,x_n\}),~\dots, e_n(\{x_1,x_2,\dots,x_n\})\right) ~.
    \end{align*}   
Then this map is generically $n!$-to-$1$.
\end{lemma}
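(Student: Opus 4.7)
The plan is to use the fundamental connection between elementary symmetric polynomials and the coefficients of a monic polynomial with prescribed roots. Specifically, for any $(x_1, x_2, \dots, x_n) \in \mathbb{R}^n$, we have the identity
\[
\prod_{i=1}^n (t - x_i) ~=~ t^n - e_1 t^{n-1} + e_2 t^{n-2} - \dots + (-1)^n e_n~,
\]
where $e_k = e_k(\{x_1, x_2, \dots, x_n\})$. Thus specifying the image $(e_1, e_2, \dots, e_n)$ uniquely determines the monic polynomial on the left, and the preimage of this image is precisely the set of ordered $n$-tuples whose underlying (unordered) multiset of entries equals the multiset of roots of this polynomial.

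Next, I would identify the open dense set on which the fiber has size exactly $n!$. Let $U \subseteq \mathbb{R}^n$ denote the complement of the ``discriminant hypersurface'' $\{(x_1, \dots, x_n) : x_i = x_j \text{ for some } i \neq j\}$, which is a measure-zero (in fact, a finite union of hyperplanes) subset. For any $(x_1, \dots, x_n) \in U$, the corresponding polynomial $\prod_{i=1}^n (t - x_i)$ has $n$ distinct roots (namely, $x_1, \dots, x_n$), so the preimage of $(e_1, \dots, e_n)$ consists of exactly the $n!$ permutations of the tuple $(x_1, \dots, x_n)$. This shows the map is $n!$-to-$1$ on $U$.

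The main (and only) subtlety is handling the real-versus-complex distinction: the roots of a real polynomial of degree $n$ need not all be real. However, we do not need to argue about arbitrary real points in the image; we need only note that for a point $(x_1, \dots, x_n) \in U$ in the \emph{domain}, all $n$ roots of the corresponding polynomial are real (by construction) and distinct, so every preimage element also lies in $\mathbb{R}^n$ and is a permutation of $(x_1, \dots, x_n)$. This avoids any counting issue from complex conjugate pairs. Since $U$ is open and dense (its complement has measure zero), the lemma follows by Definition of ``generically.'' I do not anticipate any genuine obstacles — the proof is essentially a one-paragraph observation leveraging Vieta's formulas.
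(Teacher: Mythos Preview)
Your argument is correct and is exactly the standard proof of this well-known fact: Vieta's formulas identify the image $(e_1,\dots,e_n)$ with the coefficients of the monic polynomial $\prod_{i=1}^n (t-x_i)$, so the fiber over the image of $(x_1,\dots,x_n)$ is precisely the set of orderings of the multiset of roots, which has size $n!$ off the union of hyperplanes $\{x_i=x_j\}$. Your remark about the real-versus-complex issue is also on point and correctly dispatched.

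The paper does not supply its own proof of this lemma; it simply records the result as well known, with a reference to \cite[Proof of Theorem~6.3]{singularlocus}. So there is nothing to compare against, and your write-up would serve perfectly well as the missing justification.
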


\subsection{Identifiability of \texorpdfstring{$\mathcal{M}_n(1,1)$}{Mn(1,1)}}

In this subsection, we show that all parameters are SLING in the mammillary model with input and output in the central compartment (see Figure~\ref{fig:1-1}).

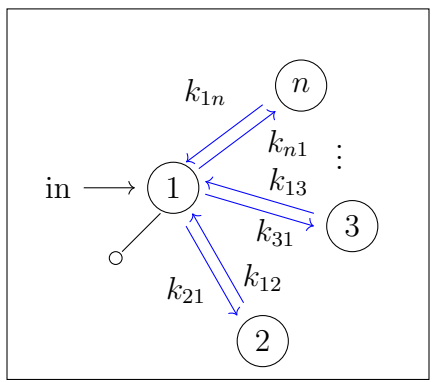
\begin{figure}[ht]
\begin{center}
	\begin{tikzpicture}[scale=1.7]
\draw (3,0) circle (0.2);
 	\draw (3.7,-1.2) circle (0.2);
 	\draw (4.4,-0.3) circle (0.2);
 	\draw (4 ,.8) circle (0.2);
    	\node[] at (3, 0) {1};
    	\node[] at (3.7, -1.2) {2};
     \node[] at (4.3, 0.3) {$\vdots$};
    	\node[] at (4.4, -0.3) {3};
    	\node[] at (4, .8) {$n$};
     \draw[->][blue] (3.1, -.3) -- (3.5, -1);
	 \draw[<-][blue](3.15, -.2) -- (3.55, -0.9);
	 \draw[->][blue] (3.25, -0.05) -- (4.1, -0.3);
	 \draw[<-][blue] (3.25, 0.05) -- (4.1, -0.2);
	 \draw[->][blue] (3.2, .15) -- (3.8, .6);
	 \draw[<-][blue] (3.1, 0.2) -- (3.7, .65);
   	 \node[] at (3.1, -.8) {$k_{21}$};
   	 \node[] at (3.7, -0.7) {$k_{12}$};
   	 \node[] at (3.9, .05) {$k_{13}$};
   	 \node[] at (3.8, -0.35) {$k_{31}$};
   	 \node[] at (3.25, 0.75) {$k_{1n}$};
   	 \node[] at (3.9, 0.35) {$k_{n1}$};
 	\draw (2.55,-.55) circle (0.05);	
	 \draw[-] (2.9, -.2 ) -- (2.6, -.5);	
  
	 \draw[->] (2.3, 0) -- (2.7, 0);	
   	 \node[] at (2.1, 0) {in};

\draw (1.7,-1.5) rectangle (5., 1.4);
	\end{tikzpicture}
\end{center}
    \caption{Parameter identifiability for  $\mathcal{M}_n(1,1)$: all parameters are SLING.}
    \label{fig:1-1}
\end{figure}


\begin{proposition}[$\mathcal{M}_n(1,1)$] \label{prop:1-1}
Let $n \geq 3$.
If $ \mathcal M =(G, \In, \Out, \Leak)$
is an $n$-compartment mammillary model 
with no leaks ($\Leak = \varnothing$) and input and output in the central compartment ($\In=\Out=\{1\}$), then \uline{every} parameter of $\mathcal{M}$ is SLING.
\end{proposition}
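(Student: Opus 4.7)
The plan is to prove this by combining the symmetry lemma (Lemma~\ref{lem:symmetry}) with the fact that $\mathcal{M}_n(1,1)$ is identifiable. By Remark~\ref{rem:1-family-uniden}, every parameter of $\mathcal{M}_n(1,1)$ is generically locally identifiable, so it suffices to show that no parameter is generically globally identifiable. Equivalently, it suffices to show every parameter has a ``symmetric partner'' in the sense of Lemma~\ref{lem:symmetry}.

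For $n \geq 3$, there are at least two peripheral compartments, namely $2$ and $3$. The key observation is that, because the input and output both sit in the central compartment ($\In=\Out=\{1\}$) and there are no leaks, \emph{every} permutation of the peripheral vertices $\{2, 3, \dots, n\}$ (extended to fix $1$) yields an automorphism of $\mathcal{M}_n(1,1)$: such a permutation takes the bidirected star graph $G$ to itself, and it preserves $\In$, $\Out$, and $\Leak$. I would first verify this directly from Definition~\ref{def:automorphism}.

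Next, for each edge parameter $k_{pq}$ of $\mathcal{M}_n(1,1)$, I would exhibit an automorphism sending the edge $(q,p)$ to a distinct edge. Given any peripheral compartment $j \in \{2, \dots, n\}$, pick any other peripheral $j' \in \{2, \dots, n\} \smallsetminus \{j\}$ (possible because $n \geq 3$), and take $\sigma$ to be the transposition $(j\, j')$ extended by the identity on $\{1\} \cup (\{2, \dots, n\} \smallsetminus \{j, j'\})$. Then $\sigma$ is an automorphism of $\mathcal{M}_n(1,1)$ that sends the edge $(1, j)$ to $(1, j')$ (and similarly sends $(j, 1)$ to $(j', 1)$). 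Applying Lemma~\ref{lem:symmetry} to this automorphism shows that $k_{j1}$ and $k_{j'1}$ are either both SLING or both unidentifiable, and likewise for $k_{1j}$ and $k_{1j'}$. Since we already know each of these parameters is generically locally identifiable, both options collapse to ``SLING.''

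Since every edge parameter $k_{1j}$ and $k_{j1}$ (for $j \in \{2, \dots, n\}$) of $\mathcal{M}_n(1,1)$ is covered by such a symmetry, the conclusion follows. I do not anticipate any technical obstacle here: the argument is essentially a one-line application of Lemma~\ref{lem:symmetry} once the abundant symmetry of the $\In=\Out=\{1\}$, $\Leak=\varnothing$ setup is noted. In particular, I would not need to invoke Proposition~\ref{prop:LHS-coefficients-updated} or perform any computation with input-output coefficients for this case; those will be the tools required for the harder cases $\mathcal{M}_n(1,2)$, $\mathcal{M}_n(2,1)$, and $\mathcal{M}_n(2,2)$, where some edges lack a symmetric partner and globally identifiable parameters genuinely appear.
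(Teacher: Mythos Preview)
Your proposal is correct and follows essentially the same approach as the paper's own proof: observe that every permutation of the peripheral compartments $\{2,\dots,n\}$ is a model automorphism, apply Lemma~\ref{lem:symmetry} to conclude every parameter is SLING or unidentifiable, and then invoke Remark~\ref{rem:1-family-uniden} to rule out the unidentifiable case. The paper's version is slightly terser (it does not bother to single out transpositions), but the argument is the same.
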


\begin{proof}
Every permutation of $\{2,3,\dots, n\}$ is a model automorphism of $\mathcal{M}$.  Thus, the ``incoming'' edges $k_{12}, k_{13}, \dots, k_{1n}$ are symmetrically identical, as are the ``outgoing'' edges $k_{21}, k_{31}, \dots, k_{n1}$.  Hence (and here $n \geq 3$ is used),  Lemma~\ref{lem:symmetry} implies that every parameter is SLING or unidentifiable.  
However, Remark~\ref{rem:1-family-uniden} implies that all parameters are at least locally identifiable.  We conclude that all parameters are SLING.
\end{proof}

\begin{remark}
    Proposition~\ref{prop:1-1} was proven previously~\cite{cobelli-lepschy-romaninjacur} (using other methods), but we provide a proof here for completeness.
\end{remark}


\subsection{Identifiability of \texorpdfstring{$\mathcal{M}_n(1,2)$}{Mn(1,2)}}

For the $4$-compartment model $\mathcal{M}_4(1,2)$, from Figure~\ref{fig:mammillary}, we saw 
in Example~\ref{ex:mammillary-solve}
that the parameter $k_{21}$ is globally identifiable and all other parameters are SLING.  This result generalizes to allow for more compartments, as follows (see Figure~\ref{fig:1-2}).

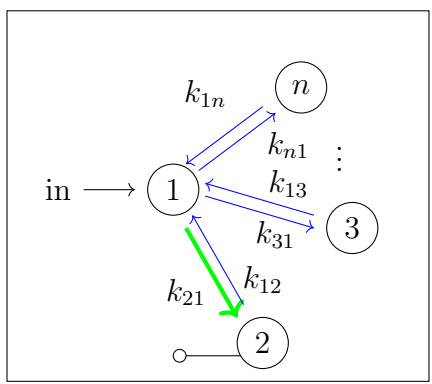
\begin{figure}[ht]
\begin{center}
	\begin{tikzpicture}[scale=1.7]
\draw (3,0) circle (0.2);
 	\draw (3.7,-1.2) circle (0.2);
 	\draw (4.4,-0.3) circle (0.2);
 	\draw (4 ,.8) circle (0.2);
    	\node[] at (3, 0) {1};
    	\node[] at (3.7, -1.2) {2};
     \node[] at (4.3, 0.3) {$\vdots$};
    	\node[] at (4.4, -0.3) {3};
    	\node[] at (4, .8) {$n$};
     \draw[->][ultra thick, green](3.1, -.3) -- (3.5, -1);
	 \draw[<-][blue](3.15, -.2) -- (3.55, -0.9);
	 \draw[->] [blue](3.25, -0.05) -- (4.1, -0.3);
	 \draw[<-] [blue](3.25, 0.05) -- (4.1, -0.2);
	 \draw[->][blue] (3.2, .15) -- (3.8, .6);
	 \draw[<-][blue] (3.1, 0.2) -- (3.7, .65);
   	 \node[] at (3.1, -.8) {$k_{21}$};
   	 \node[] at (3.7, -0.7) {$k_{12}$};
   	 \node[] at (3.9, .05) {$k_{13}$};
   	 \node[] at (3.8, -0.35) {$k_{31}$};
   	 \node[] at (3.25, 0.75) {$k_{1n}$};
   	 \node[] at (3.9, 0.35) {$k_{n1}$};
 	\draw (3.05,-1.3) circle (0.05);	
	 \draw[-] (3.53, -1.3 ) -- (3.1, -1.3);	
  
	 \draw[->] (2.3, 0) -- (2.7, 0);	
   	 \node[] at (2.1, 0) {in};

\draw (1.7,-1.5) rectangle (5., 1.4);
	\end{tikzpicture}
\end{center}
    \caption{Parameter identifiability for $\mathcal{M}_n(1,2)$: the (green) boldface arrow indicates a globally identifiable parameter, and all others are SLING.
    }
    \label{fig:1-2}
\end{figure}

\begin{proposition}[$\mathcal{M}_n(1,2)$] \label{prop:1-2}
Let $n \geq 4$.
If $ \mathcal M =(G, \In, \Out, \Leak)$
is an $n$-compartment mammillary model 
with 
$\In=\{1\}$, 
$\Out=\{2\}$, and 
$\Leak = \varnothing$, then 
    \begin{enumerate}
        \item the parameter $k_{21}$ is globally identifiable, 
        \item the parameter $k_{12}$ is SLING, and
        \item the parameters $k_{13}, k_{14}, \dots, k_{1n}, ~ k_{31}, k_{41}, \dots, k_{n1}$ are SLING. 
    \end{enumerate}
\end{proposition}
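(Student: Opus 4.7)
My overall plan is to dispatch items (1) and (3) with existing tools, and to devote the bulk of the argument to item (2). For item (1), Lemma~\ref{lem:edge-in-to-out-global} applies directly: the edge $1 \to 2$ with parameter $k_{21}$ goes from input to output, so $k_{21}$ is globally identifiable. For item (3), any permutation of $\{3, 4, \ldots, n\}$ is an automorphism of $\mathcal{M}_n(1,2)$, so Lemma~\ref{lem:symmetry} forces each $k_{1j}$ and $k_{j1}$ (for $j \geq 3$) to be SLING or unidentifiable; Proposition~\ref{prop:iden-mammillary} together with Remark~\ref{rem:1-family-uniden} rules out the second option.

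The substance of the proof is item (2). Generic local identifiability of $k_{12}$ is immediate from Proposition~\ref{prop:iden-mammillary}, so the task is to show $k_{12}$ is \emph{not} generically globally identifiable. I would first apply Proposition~\ref{prop:coeff-formula} to the right-hand-side coefficients: any spanning incoming forest of $G^*_2$ that connects compartments $1$ and $2$ must use the edge $k_{21}$ (the only edge incident to $2$ in $G^*_2$), and then compartment $1$ can have no other outgoing edge, so the remaining edges lie in $\{k_{1j} : j \geq 3\}$. This yields $d_k = k_{21}\, e_{n-k-2}(\Sigma)$ with $\Sigma := \{k_{13}, \ldots, k_{1n}\}$, so the coefficient map determines $k_{21}$ and each $e_i(\Sigma)$.

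Next I would invoke equation~\eqref{eq:big-sum} of Proposition~\ref{prop:LHS-coefficients-updated}: $k_{12}$ satisfies a polynomial equation $p(z) = 0$ of degree $n-1$ whose coefficients are identifiable from the coefficient map. Substituting the formulas~\eqref{eq:lhs-updated} into~\eqref{eq:big-sum}, the $k_{21}$-contributions cancel and the polynomial factors cleanly as
\[
    p(z) ~=~ (z - k_{12})\, G(z),
    \qquad
    G(z) ~:=~ \sum_{j=0}^{n-2} (-1)^j\, g_j\, z^{n-2-j}.
\]
A short calculation then gives $G(-s) = (-1)^{n-2}\, H(s)$, where $\det(sI - \widetilde{A}) = s\, H(s)$ and $\widetilde{A}$ is the compartmental matrix of the $(n-1)$-compartment mammillary submodel on $\{1, 3, 4, \ldots, n\}$ obtained by deleting compartment $2$. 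Hence the roots of $G$ are exactly the negatives of the nonzero eigenvalues of $\widetilde{A}$.

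The main obstacle is producing a real root of $G$ distinct from $k_{12}$ on a set of positive Lebesgue measure. For this I would invoke the classical fact that, on the open positive orthant $\mathbb{R}^{2(n-1)}_{>0}$, the mammillary matrix $\widetilde{A}$ is similar to a real symmetric matrix via $D = \mathrm{diag}\bigl(1, \sqrt{k_{13}/k_{31}}, \ldots, \sqrt{k_{1n}/k_{n1}}\bigr)$, hence has $n-1$ real eigenvalues, exactly one of which is $0$ (from the leak-free column-sum-zero property). Thus $G$ has $n-2 \geq 2$ real roots, generically distinct from each other and from $k_{12}$. For each such alternate root $r$, setting $k_{12}^\ast := r$ and reading off the corresponding $g_i^\ast$'s from~\eqref{eq:lhs-updated} determines real values $(k_{31}^\ast, \ldots, k_{n1}^\ast)$ by inverting the (generically nonsingular) Vandermonde system~\eqref{eq:formula-for-g}. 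The resulting parameter vector has the same coefficient map value as the original but a different $k_{12}$, proving non-global identifiability and hence the SLING conclusion.
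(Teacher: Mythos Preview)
Your proof is correct. Parts~(1) and~(3) match the paper's argument exactly.

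For part~(2), your overall architecture also coincides with the paper's: both factor the degree-$(n-1)$ polynomial satisfied by $k_{12}$ as $(z-k_{12})\,G(z)$, produce a positive-measure set of parameters on which $G$ has a real root $r \neq k_{12}$, and then invert the Vandermonde-type system~\eqref{eq:formula-for-g} to manufacture an alternate parameter vector with $k_{12}^{**}=r$ and identical coefficient-map image. The genuine difference lies in how you establish that $G$ has real roots. The paper specializes to the degenerate point $k_{31}^\bullet=\dots=k_{n1}^\bullet=0$, where each $g_i$ collapses to $e_i(\Sigma)$ and $G$ visibly factors as $\prod_{j\ge 3}(z-k_{1j}^\bullet)$; it then invokes continuity of polynomial roots to extend to an open neighborhood. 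You instead recognize $G(-s)$, up to sign, as the reduced characteristic polynomial $H(s)$ of the $(n-1)$-compartment compartmental matrix $\widetilde A$, and use the classical diagonal symmetrization of bidirected compartmental matrices on $\mathbb{R}_{>0}^{2(n-1)}$ to conclude that all $n-2$ roots of $G$ are real there. Your route is more conceptual and holds uniformly on the entire (biologically natural) positive orthant; the paper's route is more elementary and stays entirely within the combinatorial framework of Proposition~\ref{prop:LHS-coefficients-updated}, requiring no spectral input. Both deliver a positive-measure set, which is all that Definition~\ref{def:iden-param} demands.
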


\begin{proof} 
Part~(1) follows directly from Lemma~\ref{lem:edge-in-to-out-global}.  Also, part~(3) follows from 
Remark~\ref{rem:1-family-uniden}, 
Lemma~\ref{lem:symmetry}, 
and the fact that every permutation of $\{3,4,\dots,n\}$ is a model automorphism of $\mathcal{M}$ (here, the assumption $n \geq 4$ is used).

For part~(2), 
we begin by analyzing the coefficients on the right-hand side of the input-output equation.  By Proposition~\ref{prop:coeff-formula}, these coefficients (denoted by $d_i$) are sums over certain subgraphs of the following graph $G_2^*$, which we obtain from the star graph in Figure~\ref{fig:original-graph-G} by removing the (unique) edge outgoing from  compartment-$2$ (the output compartment):

\begin{center}
	\begin{tikzpicture}[scale=1.6]
\draw (3,0) circle (0.2);
 	\draw (3.7,-1.2) circle (0.2);
 	\draw (4.4,-0.3) circle (0.2);
 	\draw (4 ,.8) circle (0.2);
    	\node[] at (3, 0) {1};
    	\node[] at (3.7, -1.2) {2};
        \node[] at (4.2, 0.4) {$\vdots$};
    	\node[] at (4.4, -0.3) {3};
    	\node[] at (4, .8) {$n$};
     \draw[->] (3.1, -.3) -- (3.5, -1);
	 \draw[->] (3.25, -0.05) -- (4.1, -0.3);
	 \draw[<-] (3.25, 0.05) -- (4.1, -0.2);
	 \draw[->] (3.2, .15) -- (3.8, .6);
	 \draw[<-] (3.1, 0.2) -- (3.7, .7);
   	 \node[] at (3.1, -.8) {$k_{21}$};
   	 \node[] at (3.9, .05) {$k_{13}$};
   	 \node[] at (3.8, -0.35) {$k_{31}$};
   	 \node[] at (3.25, 0.75) {$k_{1n}$};
   	 \node[] at (3.9, 0.35) {$k_{n1}$};
	\end{tikzpicture}
\end{center}
Such subgraphs must connect the input and output compartments (in compartment-$1$ and compartment-$2$, respectively) and so must contain the edge $k_{21}$.  Next, the condition of being an incoming forest precludes the remaining outgoing edges from compartment-$1$ (namely, $k_{31},k_{41},\dots, k_{n1}$).  Thus, the coefficients $d_i$ are given by the following formulas involving elementary symmetric polynomials on the set $\Sigma:=\{k_{13}, k_{14}, \dots, k_{1n} \}$:
\begin{align} \label{eq:rhs-coeff-proof-12}
    \notag
    d_{n-2} ~&=~ k_{21}\\
    \notag
    d_{n-3} ~&=~ k_{21}~ e_1(\Sigma) \\
    d_{n-4} ~&=~ k_{21}~ e_2(\Sigma) \\
    \notag
     & ~ \vdots \\
     \notag
    d_0 ~&=~ k_{21}~ e_{n-2}(\Sigma) 
    ~. 
\end{align}

Next, Proposition~\ref{prop:LHS-coefficients-updated} gives the following equation, from~\eqref{eq:big-sum}, involving left-hand side coefficients $c_i$, the parameters $k_{12}$ and $k_{21}$, and elementary symmetric polynomials on $\Sigma$:
\begin{align} \label{eq:big-sum-in-proof-1-2}
        \notag
        & k_{12}^{n-2} c_{n-1}
        -
        k_{12}^{n-3} c_{n-2}
        +
        k_{12}^{n-4} c_{n-3}
        -
        \dots
        \pm
         c_{1} 
        \\
        & \quad\quad  ~=~
        \left(
        k_{12}^{n-2}
        -
        k_{12}^{n-3}~ e_1(\Sigma)
        +
        k_{12}^{n-4}~ e_2(\Sigma)
        -
        \dots
        \pm
         e_{n-2}(\Sigma)
        \right) k_{21}
        ~+~
        k_{12}^{n-1}~.
    \end{align}
By moving all terms of equation~\eqref{eq:big-sum-in-proof-1-2} to the right-hand side, we obtain the following equation:
\begin{align} \label{eq:big-sum-in-proof-1-2-moved-to-rhs}
        \notag
        0 ~&=~
        k_{12}^{n-1}
        +
        ( k_{21} -  c_{n-1}) 
	k_{12}^{n-2}
        -
        ( k_{21} e_1(\Sigma) -c_{n-2})
        k_{12}^{n-3} 
        +
        \dots
        \pm
         ( k_{21} e_{n-2}(\Sigma) - c_{1})
         \\
         &=~ 
         k_{12}^{n-1}
        +
         \sum_{i=2}^{n} (-1)^i ( k_{21} e_{i-2}(\Sigma) -c_{n-i+1}) k_{12}^{n-i}
         ~,
    \end{align}
where we use the fact that $e_0(\Sigma):=1$. 
Our aim is to apply Proposition~\ref{prop:polynom-to-iden} using the following function which is closely related to the right-hand side of~\eqref{eq:big-sum-in-proof-1-2-moved-to-rhs}: \begin{align} \label{eq:h-in-proof-1-2}
	h
         &~:=~ 
         z^{n-1}
        +
         \sum_{i=2}^{n} (-1)^i ( k_{21} e_{i-2}(\Sigma) -c_{n-i+1}) z^{n-i}
         ~.
    \end{align}
By construction (see equations~\eqref{eq:big-sum-in-proof-1-2-moved-to-rhs}--\eqref{eq:h-in-proof-1-2}), $h|_{z=k_{12}} = 0$.  
Also, the leading coefficient (with respect to $z$) is~$1$, which is a nonzero function that is identifiable from the coefficient map (as in Definition~\ref{def:identifiable-function}). 
Therefore, to apply Proposition~\ref{prop:polynom-to-iden}, we need only show that for all $i=2,3,\dots, n$, the polynomial function $(-1)^i ( k_{21} e_{i-2}(\Sigma) - c_{n-i+1})$ is identifiable from the coefficient map.  Indeed, the coefficients $c_i$ are (by definition) identifiable from the coefficient map, and we see from~\eqref{eq:rhs-coeff-proof-12} that $k_{21}$ and the elementary symmetric polynomials $e_1(\Sigma),e_2(\Sigma),\dots, e_{n-2}(\Sigma)$ are identifiable from the coefficient map.
Thus, we indeed can apply Proposition~\ref{prop:polynom-to-iden} and thereby conclude that $k_{12}$ is generically locally identifiable.

It remains only to show that $k_{12}$ is \emph{not} generically \uline{globally} identifiable.  By Definition~\ref{def:iden-param}, it suffices to find a positive-measure subset $\Omega$ of the parameter space $\mathbb{R}^{\lvert E\rvert + \lvert \Leak\rvert}
=
\mathbb{R}^{2n}$, such that, on $\Omega$, the relevant 
set arising from the coefficient map (namely,~\eqref{eq:set-parameters}) has size at least two.  

As a step toward defining such a set $\Omega$, we revisit the function $h$, from~\eqref{eq:h-in-proof-1-2}.  We rewrite $h$ by using the equations~\eqref{eq:lhs-updated} in straightforward manner to obtain the first equality here:
\begin{align} \label{eq:h-in-proof-1-2-rewritten}
    \notag
	h
         &~=~ 
         z^{n-1}
        +
         \sum_{i=2}^{n} (-1)^{i+1} ( k_{12} g_{i-2} + g_{i-1} ) z^{n-i}
         ~\\
         &~=~ 
        (z - k_{12}) ( z^{n-2} - g_1 z^{n-3} + g_2 z^{n-4}- \dots \pm g_{n-2}  )
        ~,
    \end{align}
and the second equality above uses the fact that $g_0=g_{n-1}=0$ (from Proposition~\ref{prop:LHS-coefficients-updated}).

Our next aim is to show that there is an open subset of parameter space $\mathbb{R}^{2n}$ on which, informally speaking, $h$ always specializes to a (univariate) polynomial with $n-1$ distinct real roots.  More precisely, we will show the existence of an open set $\Theta \subseteq \mathbb{R}^{2n}$ such that, for all $\mathbf{k}^* \in \Theta$, the polynomial $h|_{\mathbf{k}=\mathbf{k}^*}$ (which is univariate in $z$) has $n-1$ distinct real roots.

We will construct such a set $\Theta$ as an open neighborhood of some vector $\mathbf{k^{\bullet}}$.  To this end, we choose a vector of parameters 
$\mathbf{k^{\bullet}} = 
(
k^{\bullet}_{12}, 
k^{\bullet}_{21},
k^{\bullet}_{13}, 
k^{\bullet}_{31},
\dots,
k^{\bullet}_{1n}, 
k^{\bullet}_{n1}
) \in \mathbb{R}^{2n}
$ with two properties:
    \begin{enumerate}[label=(\roman*)]
        \item 
    $k^{\bullet}_{31}=k^{\bullet}_{41}=\dots=k^{\bullet}_{n1}=0$ 
(that is, all ``outgoing'' parameters, except $k_{21}$, are $0$), and
        \item the ``incoming'' parameters  
$k^{\bullet}_{12}, k^{\bullet}_{13}, 
k^{\bullet}_{14}, 
\dots,
k^{\bullet}_{1n}$ are distinct.
    \end{enumerate}

We claim that $g_i|_{\mathbf{k}=\mathbf{k^{\bullet}}} = e_i(\Sigma^{\bullet})$ for all $i=1,2,\dots, n-2$, where 
$\Sigma^{\bullet}:=\{k^{\bullet}_{13}, k^{\bullet}_{14}, \dots, k^{\bullet}_{1n} \}$.  
We verify this claim, as follows.
First, recall from~\eqref{eq:g_i-equation} that $g_i$ is a sum over incoming forests of the graph $\widetilde{G}$ in Figure~\ref{fig:graph-G-tilde}. Thus, evaluating at $\mathbf{k}^{\bullet}$ essentially disregards all ``outgoing'' edges in $\widetilde{G}$ (because the corresponding parameters are set to $0$ by property (i)), leaving only ``incoming'' edges.  These ``incoming'' edges correspond to the labels in $\Sigma^{\bullet}$, which can be chosen freely to form incoming forests.  We conclude that $g_i|_{\mathbf{k}=\mathbf{k^{\bullet}}} = e_i(\Sigma^{\bullet})$, as claimed.

Using~\eqref{eq:h-in-proof-1-2-rewritten}, it now follows that:
    \begin{align} \label{eq:see-roots}
        h|_{\mathbf{k}=\mathbf{k^{\bullet}}}
        ~=~
           (z - k^{\bullet}_{12}) ( z^{n-2} - e_1(\Sigma^{\bullet}) z^{n-3} + e_2(\Sigma^{\bullet}) z^{n-4}- \dots \pm e_{n-2}(\Sigma^{\bullet})  )~.
    \end{align}
 It is easily seen from~\eqref{eq:see-roots} that the set of roots of 
 the polynomial 
 $h|_{\mathbf{k}=\mathbf{k^{\bullet}}}$  is $\{k^{\bullet}_{12}\} \cup \Sigma^{\bullet}$ and therefore $h|_{\mathbf{k}=\mathbf{k^{\bullet}}}$ has $n-1$ distinct real roots (property (ii) of $k^{\bullet}$ is used here).  
 We claim that there is an open neighborhood $\Theta$ of $\mathbf{k^{\bullet}}$ such that for every $\mathbf{k^{*}} \in \Theta$, the polynomial $h|_{\mathbf{k}={\mathbf{k^{*}}}}$ has $n-1$ distinct real roots.  Indeed, this follows readily from two well-known facts: (1)~the roots of a polynomial vary continuously in the coefficients of the polynomial (see, e.g.,~\cite[Theorem~1.3.1, page 10]{Rahman:Schmeisser:AnalyticTheoryOfPolynomials}), and (2)~non-real roots of real-coefficient univariate polynomials come in complex-conjugate pairs.


%

Next, if needed, we shrink $\Theta$ to a smaller open neighborhood $\Omega$ 
of $\mathbf{k}^{\bullet}$
so that property~(ii) holds in the neighborhood.  More precisely, we ensure that, for all $\mathbf{k^{*}} $ in the neighborhood~$\Omega$, the parameter values $k^{*}_{12}, k^{*}_{13}, 
\dots,
k^{*}_{1n}$ are distinct.  

Now consider an arbitrary parameter vector $\mathbf{k^{*}} $ in 
$\Omega$.  
By Definition~\ref{def:iden-param}, it suffices to show the existence of another parameter vector $\mathbf{k}^{**} \in \mathbb{R}^{2n}
$, with $k^{**}_{12} \neq k^*_{12}$, such that both parameter vectors have the same image under the coefficient map, that is, 
\begin{align} \label{eq:proof-12-SLING}
c_i(\mathbf{k}^{*})~=~c_i(\mathbf{k}^{**}) 
\quad
{\rm and}
\quad 
d_j(\mathbf{k}^{*})~=~d_j(\mathbf{k}^{**})     
\end{align}
for all $1 \leq i \leq n-1$ and $0 \leq j \leq n-2$, 
where the $c_i$'s and $d_j$'s are
as
in~\eqref{eq:lhs-updated} and~\eqref{eq:rhs-coeff-proof-12}, respectively.

To this end, we begin to define such a vector $\mathbf{k}^{**}$ by setting some of its coordinates equal to the corresponding values in $\mathbf{k}^{*}$, 
as follows: 
    \begin{align} \label{eq:coefficients-unchanged}
        k^{**}_{21}~:=~k^*_{21} \quad \textrm{and}
    \quad 
    (k^{**}_{13}, k^{**}_{14}, \dots, k^{**}_{1n})
    ~:=~ ( k^{*}_{13},k^{*}_{14}, \dots, k^{*}_{1n})~.
    \end{align}
The above choice for $k^{**}_{21}$ is mandatory, as $k_{21}$ is globally identifiable.  Also, the choices~\eqref{eq:coefficients-unchanged} guarantee that the parameters appearing in the formulas for $d_0,d_1,\dots,d_{n-2}$, in~\eqref{eq:rhs-coeff-proof-12}, are the same in $\mathbf{k}^{*}$ and $\mathbf{k}^{**}$.  Therefore, 
the desired equalities
$d_j(\mathbf{k}^{*}) = d_j(\mathbf{k}^{**})$, from~\eqref{eq:proof-12-SLING}, hold for all $0 \leq j \leq n-2$.

The rest of this proof consists of defining the remaining coordinates of $\mathbf{k}^{**}$ and then verifying the desired equalities
$c_i(\mathbf{k}^{*})=c_i(\mathbf{k}^{**}) $ from~\eqref{eq:proof-12-SLING}.

To this end, consider the univariate polynomial 
$h|_{\mathbf{k}=\mathbf{k^{*}}}$.  One of its roots, by~\eqref{eq:h-in-proof-1-2-rewritten}, is $k^*_{12}$.  Additionally, there are $n-2$ additional (distinct) real roots (by construction of $\Omega$).  Accordingly, we pick one of these roots and set  
$k^{**}_{12}$ equal to it.  Hence, we have:
\begin{align} \label{eq:choice-of-k12}
h|_{\mathbf{k}=\mathbf{k^{*}}}(k^{**}_{12})=0 \quad \textrm{and}
    \quad 
    k^{**}_{12} \neq k^*_{12}~.
\end{align}

To define the remaining entries of $\mathbf{k}^{**}$, we need to use a matrix that is obtained by specializing and performing row-operations on the matrix $M$ in~\eqref{eq:matrix-for-G-tilde}.  To give details, consider the following $(n-2) \times (n-2)$ matrix:
    \begin{align} \label{eq:M*}
        M^* \quad := \quad M|_{k_{13}=k^*_{13},~k_{14}=k^*_{14},~\dots,~ k_{1n}=k^*_{1n}}~.
    \end{align}
Next, let $\textrm{Shift}(M^*)$ be the  $(n-2) \times (n-2)$ matrix obtained from $M^*$ by shifting the rows down by one; more precisely, the first row of $\textrm{Shift}(M^*)$ consists of $0$'s, the second row is the first row of $M^*$, 
the third row is the second row of $M^*$, 
and so on, ending with 
row-$(n-2)$ being the row-$(n-3)$ of $M^*$.  
Finally, define:
    \begin{align} \label{eq:M-tilde}
        \widetilde{M} \quad := \quad 
            M^* + k^*_{12} \textrm{Shift}(M^*)~.
    \end{align}

We claim that 
$\widetilde{M}$ is invertible.  To see this, recall from Proposition~\ref{prop:LHS-coefficients-updated} that $\det M = \pm \prod_{3 \leq j < \ell \leq n} (k_{1j}- k_{1 \ell})$.  Therefore, by~\eqref{eq:M*},
$\det M^* = \pm \prod_{3 \leq j < \ell \leq n} (k^*_{1j}- k^*_{1 \ell})$, which is nonzero (recall that the fact that $\mathbf{k^{*}}$ is in $ \Omega$ guarantees that $k^{*}_{13}, k^{*}_{14}, 
\dots,
k^{*}_{1n}$ are distinct).  Hence, the matrix  $M^*$ is invertible.  Finally, by construction~\eqref{eq:M-tilde}, $\widetilde{M}$ is obtained from $M^*$ by row operations and so is also invertible.

As $\widetilde{M}$ is invertible, we can now define the remaining entries of $\mathbf{k}^{**}$, as follows, where we use the notation
$\Sigma^*:=\{k^{*}_{13}, k^{*}_{14}, \dots, k^{*}_{1n}\}$:
\begin{equation} \label{eq:how-to-define-remaining-parameters}
    \begin{pNiceArray}{c}
    k^{**}_{31} \\
    k^{**}_{41} \\
    \vdots \\
    k^{**}_{n1}
    \end{pNiceArray}
    \quad := \quad 
    (\widetilde{M})^{-1}
    \begin{pNiceArray}{c}
        c_{n-1}(\mathbf{k}^{*}) - k^*_{21} -k_{12}^{**} - e_1(\Sigma^*) \\
        c_{n-2}(\mathbf{k}^{*}) - k^*_{21} e_1(\Sigma^*)-k_{12}^{**}e_1(\Sigma^*) - e_2(\Sigma^*) \\
        c_{n-3}(\mathbf{k}^{*}) - k^*_{21} e_2(\Sigma^*)-k_{12}^{**}e_2(\Sigma^*) - e_3(\Sigma^*) \\
	\vdots
    \\
        c_{2}(\mathbf{k}^{*}) - k^*_{21} e_{n-3}(\Sigma^*) -k_{12}^{**}e_{n-3}(\Sigma^*) - e_{n-2}(\Sigma^*) \\
    \end{pNiceArray}
    ~.
\end{equation}
 
Next, we verify the equalities $c_i(\mathbf{k}^{*})=c_i(\mathbf{k}^{**}) $, in~\eqref{eq:proof-12-SLING}, for $2 \leq i \leq n-1$.  Indeed, it is straightforward (although tedious) to check that these equalities  
follow readily from the formulas for $c_2,c_3,\dots, c_{n-1}$ in~\eqref{eq:lhs-updated}, the formulas for the $g_i$'s in terms of the matrix $M$ in~\eqref{eq:formula-for-g}, the construction of the matrix $\widetilde{M}$ in~\eqref{eq:M-tilde}, and our choice of parameters $k_{ij}^{**}$, including the formula~\eqref{eq:how-to-define-remaining-parameters}.

Finally, it remains only to check that $c_1(\mathbf{k}^{*})=c_1(\mathbf{k}^{**}) $.
To see this, we first recall 
        from~\eqref{eq:choice-of-k12} that
$h|_{\mathbf{k}=\mathbf{k^{*}}}(k^{**}_{12})=0$.  
Thus, by using~\eqref{eq:h-in-proof-1-2} (and comparing with~\eqref{eq:big-sum-in-proof-1-2-moved-to-rhs}), we obtain the following:
\begin{align} \label{eq:end-of-big-proof}
     0 ~&=~ 
        (k^{**}_{12})^{n-1}
        +
        ( k^{*}_{21} -  c^{*}_{n-1}(\mathbf{k}^{*})) 
	(k^{**}_{12})^{n-2}
        -
        ( k^{*}_{21} e_1(\Sigma^*) -c^{*}_{n-2}(\mathbf{k}^{*}))
        (k^{**}_{12})^{n-3} 
        +
        \dots \\
        ~& \quad \quad \quad 
        \pm
         ( k^{*}_{21} e_{n-2}(\Sigma^*) - c^{*}_{1}(\mathbf{k}^{*}))
         ~.
         \notag
    \end{align}
Next, consider specializing the equation~\eqref{eq:big-sum-in-proof-1-2-moved-to-rhs} at $\mathbf{k}=\mathbf{k^{**}}$.  
We claim that the resulting equation is exactly what is shown in~
\eqref{eq:end-of-big-proof}, except that $c_1(\mathbf{k}^{*}) $ (in the second line) is replaced by $c_1(\mathbf{k}^{**})$.  Indeed, this claim follows readily from two facts: (1)~the parameter~$k_{21}$ and the parameters in $\Sigma$ are the same between  $\mathbf{k^{*}}$ and $\mathbf{k^{**}}$ (recall~\eqref{eq:coefficients-unchanged}), and (2)~$c_i(\mathbf{k}^{*})=c_i(\mathbf{k}^{**}) $, for $2 \leq i \leq n-1$, which we showed above.  We therefore conclude that $c_1(\mathbf{k}^{*})=c_1(\mathbf{k}^{**}) $.
\end{proof}

\subsection{Identifiability of \texorpdfstring{$\mathcal{M}_n(2,1)$}{Mn(2,1)}}
In this subsection, we analyze mammillary models with input in a peripheral compartment and output in the central compartment (Figure~\ref{fig:2-1}).  The proof of Proposition~\ref{prop:2-1} below is similar to certain parts of the proof we gave above for Proposition~\ref{prop:1-2}.

\begin{figure}[ht]
\begin{center}
	\begin{tikzpicture}[scale=1.7]
\draw (3,0) circle (0.2);
 	\draw (3.7,-1.2) circle (0.2);
 	\draw (4.4,-0.3) circle (0.2);
 	\draw (4 ,.8) circle (0.2);
    	\node[] at (3, 0) {1};
    	\node[] at (3.7, -1.2) {2};
     \node[] at (4.3, 0.3) {$\vdots$};
    	\node[] at (4.4, -0.3) {3};
    	\node[] at (4, .8) {$n$};
     \draw[->][line width = 0.55mm][green] (3.1, -.3) -- (3.5, -1);
	 \draw[<-][line width = 0.55mm][green](3.15, -.2) -- (3.55, -0.9);
	 \draw[->][blue] (3.25, -0.05) -- (4.1, -0.3);
	 \draw[<-][blue] (3.25, 0.05) -- (4.1, -0.2);
	 \draw[->][blue] (3.2, .15) -- (3.8, .6);
	 \draw[<-][blue] (3.1, 0.2) -- (3.7, .65);
   	 \node[] at (3.1, -.8) {$k_{21}$};
   	 \node[] at (3.7, -0.7) {$k_{12}$};
   	 \node[] at (3.9, .05) {$k_{13}$};
   	 \node[] at (3.8, -0.35) {$k_{31}$};
   	 \node[] at (3.25, 0.75) {$k_{1n}$};
   	 \node[] at (3.9, 0.35) {$k_{n1}$};
 	\draw (2.25, 0) circle (0.05);	
	 \draw[-] (2.3, 0) -- (2.8, 0);	
  
	 \draw[<-] (3.45, -1.3 ) -- (3, -1.3);	
   	 \node[] at (2.85,-1.3) {in};

\draw (1.7,-1.5) rectangle (5., 1.4);
	\end{tikzpicture}
\end{center}
    \caption{Parameter identifiability for $\mathcal{M}_n(2,1)$: the (green) boldface arrows indicate generically globally identifiable parameters (in fact, $k_{12}$ is globally identifiable), and all others are SLING.}
    \label{fig:2-1}
\end{figure}
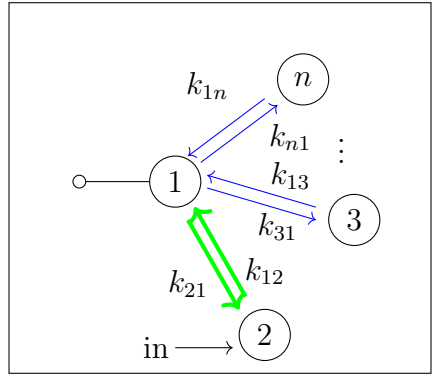

\begin{proposition}[$\mathcal{M}_n(2,1)$] \label{prop:2-1}
Let $n \geq 4$.
If $ \mathcal M =(G, \In, \Out, \Leak)$
is an $n$-compartment mammillary model 
with 
$\In=\{2\}$, 
$\Out=\{1\}$, and 
$\Leak = \varnothing$, then 
    \begin{enumerate}
        \item the parameter $k_{12}$ is globally identifiable, 
        \item the parameter $k_{21}$ is generically globally identifiable, and
        \item the parameters $k_{13}, k_{14}, \dots, k_{1n}, ~ k_{31}, k_{41}, \dots, k_{n1}$ are SLING.
    \end{enumerate}
\end{proposition}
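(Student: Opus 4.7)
Part (1) is immediate from Lemma~\ref{lem:edge-in-to-out-global}: the edge from the input (compartment-$2$) to the output (compartment-$1$) is labeled $k_{12}$, so $k_{12}$ is globally identifiable.

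Part (3) follows by the same argument used in Proposition~\ref{prop:1-2}(3). Since the input is compartment-$2$ and the output is compartment-$1$, every permutation of $\{3,4,\dots,n\}$ is an automorphism of $\mathcal{M}$, and (since $n\geq 4$) each of the parameters $k_{13},\dots,k_{1n},k_{31},\dots,k_{n1}$ has a genuine symmetric partner. Lemma~\ref{lem:symmetry} then forces each such parameter to be SLING or unidentifiable, and Remark~\ref{rem:1-family-uniden} (which applies since the model is identifiable by Proposition~\ref{prop:iden-mammillary}) rules out the latter.

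The main work is part (2). My plan is to apply Proposition~\ref{prop:polynom-to-iden} with $d=1$. I first compute the right-hand-side coefficients of the input-output equation via Proposition~\ref{prop:coeff-formula}. The graph $G_{1}^{*}$ (obtained from the star by deleting outgoing edges of compartment-$1$) retains exactly the ``incoming'' edges labeled $k_{12},k_{13},\dots,k_{1n}$; every subset of these automatically forms a spanning incoming forest, and the connectedness requirement between compartments $2$ and $1$ forces the edge labeled $k_{12}$ to be present. Writing $\Sigma:=\{k_{13},\dots,k_{1n}\}$, this yields
\[
d_k ~=~ k_{12}\cdot e_{n-k-2}(\Sigma), \qquad k=0,1,\dots,n-2.
\]
In particular $k_{12}=d_{n-2}$, and each product $k_{12}\,e_j(\Sigma)=d_{n-j-2}$ is a coordinate of $\msc$, hence identifiable from $\msc$.

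Next I invoke equation~\eqref{eq:big-sum} of Proposition~\ref{prop:LHS-coefficients-updated} and multiply through by $k_{12}$ in order to avoid division: the identity
\[
k_{12}\cdot k_{12}^{\,n-i}\,e_{i-2}(\Sigma) ~=~ d_{n-2}^{\,n-i}\,d_{n-i}
\]
rewrites each term of the scaled equation as an explicit polynomial in the coordinates $d_{0},\dots,d_{n-2},c_{1},\dots,c_{n-1}$ of $\msc$. This produces an identity $\widetilde{g}|_{z=k_{21}}=0$ with $\widetilde{g}=\widetilde{q}_{1}\,z+\widetilde{q}_{0}$ whose coefficients are therefore identifiable functions from $\msc$, and whose leading coefficient factors as
\[
\widetilde{q}_{1} ~=~ k_{12}\prod_{i=3}^{n}(k_{12}-k_{1i}),
\]
which is not the zero polynomial on parameter space. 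Proposition~\ref{prop:polynom-to-iden} with $d=1$ then delivers generic global identifiability of $k_{21}$. The main obstacle I expect is the bookkeeping: verifying that the sign-alternating expression in~\eqref{eq:big-sum} telescopes into the Vandermonde-style product above (which is what guarantees the leading coefficient is nonzero), and checking carefully that the multiplication by $k_{12}$ produces honest polynomials in $\msc$-coordinates rather than rational functions.
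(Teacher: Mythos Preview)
Your proposal is correct and follows essentially the same approach as the paper: parts~(1) and~(3) are handled identically, and for part~(2) both you and the paper compute the $d_k$'s, invoke equation~\eqref{eq:big-sum}, rewrite its terms as polynomials in the $c_i$'s and $d_i$'s, and then apply Proposition~\ref{prop:polynom-to-iden} with $d=1$. Your version is slightly more explicit than the paper's in two respects---you state the multiplication by $k_{12}$ up front (the paper's substitution steps implicitly require this), and you supply the Vieta factorization $\widetilde{q}_1 = k_{12}\prod_{i=3}^{n}(k_{12}-k_{1i})$ to justify that the leading coefficient is nonzero, whereas the paper simply asserts the resulting linear polynomial is nonzero---but these are refinements of the same argument, not a different route.
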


\begin{proof} 
Part~(1) follows directly from Lemma~\ref{lem:edge-in-to-out-global} ($k_{12}$ equals the coefficient $d_{n-2}$).  Also, part~(3) follows from 
Remark~\ref{rem:1-family-uniden}, 
Lemma~\ref{lem:symmetry}, 
 and the fact that every permutation of $\{3,4,\dots,n\}$ is a model automorphism of $\mathcal{M}$ (here, the assumption $n \geq 4$ is used).

For part~(2), we begin by analyzing the coefficients on the right-hand side of the input-output equation.  According to Proposition~\ref{prop:coeff-formula}, these coefficients are sums over certain subgraphs of the following graph $G_1^*$, which is obtained from the star graph in Figure~\ref{fig:original-graph-G} by removing all the edges outgoing from  compartment-$1$ (the output compartment):

\begin{center}
	\begin{tikzpicture}[scale=1.4]
\draw (3,0) circle (0.2);
 	\draw (3.7,-1.2) circle (0.2);
 	\draw (4.4,-0.3) circle (0.2);
 	\draw (4 ,.8) circle (0.2);
    	\node[] at (3, 0) {1};
    	\node[] at (3.7, -1.2) {2};
     \node[] at (4.2, 0.4) {$\vdots$};
    	\node[] at (4.4, -0.3) {3};
    	\node[] at (4, .8) {$n$};
	 \draw[<-](3.15, -.2) -- (3.55, -0.9);
	 \draw[<-] (3.25, 0.05) -- (4.1, -0.2);
	 \draw[<-] (3.1, 0.2) -- (3.7, .65);
   	 \node[] at (3.1, -0.6) {$k_{12}$};
   	 \node[] at (3.9, .05) {$k_{13}$};
   	 \node[] at (3.25, 0.7) {$k_{1n}$};
	\end{tikzpicture}
\end{center}
In order to connect the input and output compartments (in compartment-$2$ and compartment-$1$, respectively), the subgraphs must contain the edge $k_{12}$.  The condition of being an incoming forest is vacuous (in the graph above, no compartment has more than one outgoing edge, nor can any of the edges form a cycle).  We conclude that $k_{12}$ is a factor of every right-hand side coefficient $d_i$, as in~\eqref{eq:bortner-formula-coeff}, and moreover these coefficients $d_i$ are given by the following formulas involving elementary symmetric polynomials on the set $\Sigma:=\{k_{13}, k_{14}, \dots, k_{1n} \}$:
\begin{align} \label{eq:rhs-coeff-proof-21}
    \notag
    d_{n-2} ~&=~ k_{12}\\
    \notag
    d_{n-3} ~&=~ k_{12}~ e_1(\Sigma) \\
    d_{n-4} ~&=~ k_{12}~ e_2(\Sigma) \\
    \notag
     & ~ \vdots \\
     \notag
    d_0 ~&=~ k_{12}~ e_{n-2}(\Sigma) ~=~ k_{12} k_{13} \dots k_{1n}~. 
\end{align}

Next, Proposition~\ref{prop:LHS-coefficients-updated} yields an equation~\eqref{eq:big-sum} involving left-hand side coefficients $c_i$, which we reproduce here for convenience:
\begin{align} \label{eq:big-sum-in-proof}
        \notag
        & k_{12}^{n-2} c_{n-1}
        -
        k_{12}^{n-3} c_{n-2}
        +
        k_{12}^{n-4} c_{n-3}
        -
        \dots
        \pm
         c_{1} 
        \\
        & \quad\quad  ~=~
        \left(
        k_{12}^{n-2}
        -
        k_{12}^{n-3}~ e_1(\Sigma)
        +
        k_{12}^{n-4}~ e_2(\Sigma)
        -
        \dots
        \pm
         e_{n-2}(\Sigma)
        \right) k_{21}
        ~+~
        k_{12}^{n-1}~.
    \end{align}
The idea now is to view equation~\eqref{eq:big-sum-in-proof} as a linear polynomial in $k_{21}$ with coefficients that are identifiable from the coefficient map.  More precisely, we use the equations~\eqref{eq:rhs-coeff-proof-21} to perform the following steps to obtain a nonzero linear polynomial in $k_{21}$ whose coefficients are polynomial expressions in the $c_i$'s and $d_i$'s (and thus are identifiable from the coefficient map):
    \begin{enumerate}
        \item Replace $k_{12}^{n-2}~ e_1(\Sigma)$ by 
        $k_{12}^{n-3}~d_{n-3}$, and so on, ending by replacing
        $k_{12}~ e_{n-2}(\Sigma)$ by $d_0$;
        \item Replace each occurrence of $k_{12}$ by $d_{n-2}$.
    \end{enumerate}    
Now Proposition~\ref{prop:polynom-to-iden} applies, and so $k_{21}$ is generically globally identifiable. 
\end{proof}


\subsection{Identifiability of \texorpdfstring{$\mathcal{M}_n(2,2)$}{Mn(2,2)}}
This subsection focuses on mammillary models in which the input and output are in the same peripheral compartment.  We show that the edges incident to this peripheral compartment have generically globally identifiable parameters, and all other parameters are SLING (see Figure~\ref{fig:2-2}).  We motivate this result -- and the idea behind the proof -- through the following example.

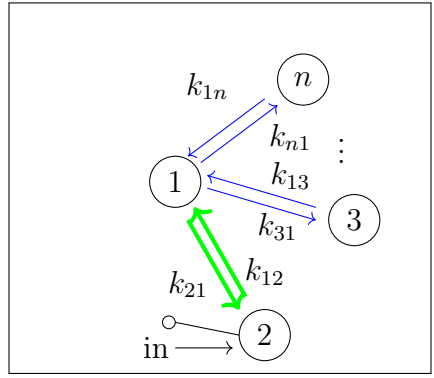
\begin{figure}[ht]
\begin{center}
	\begin{tikzpicture}[scale=1.7]
\draw (3,0) circle (0.2);
 	\draw (3.7,-1.2) circle (0.2);
 	\draw (4.4,-0.3) circle (0.2);
 	\draw (4 ,.8) circle (0.2);
    	\node[] at (3, 0) {1};
    	\node[] at (3.7, -1.2) {2};
     \node[] at (4.3, 0.3) {$\vdots$};
    	\node[] at (4.4, -0.3) {3};
    	\node[] at (4, .8) {$n$};
     \draw[->][line width = 0.55mm][green] (3.1, -.3) -- (3.5, -1);
	 \draw[<-][line width = 0.55mm][green](3.15, -.2) -- (3.55, -0.9);
	 \draw[->][blue] (3.25, -0.05) -- (4.1, -0.3);
	 \draw[<-][blue] (3.25, 0.05) -- (4.1, -0.2);
	 \draw[->][blue] (3.2, .15) -- (3.8, .6);
	 \draw[<-][blue] (3.1, 0.2) -- (3.7, .65);
   	 \node[] at (3.1, -.8) {$k_{21}$};
   	 \node[] at (3.7, -0.7) {$k_{12}$};
   	 \node[] at (3.9, .05) {$k_{13}$};
   	 \node[] at (3.8, -0.35) {$k_{31}$};
   	 \node[] at (3.25, 0.75) {$k_{1n}$};
   	 \node[] at (3.9, 0.35) {$k_{n1}$};
 	\draw (2.95, -1.1) circle (0.05);	
	 \draw[-] (3.5, -1.2) -- (3, -1.1);	
  
	 \draw[<-] (3.45, -1.3 ) -- (3, -1.3);	
   	 \node[] at (2.85,-1.3) {in};

\draw (1.7,-1.5) rectangle (5., 1.4);
	\end{tikzpicture}
\end{center}
    \caption{Parameter identifiability for $\mathcal{M}_n(2,2)$: the (green) boldface arrows indicate generically globally identifiable parameters (in fact, $k_{12}$ is globally identifiable), and all others are SLING.}
    \label{fig:2-2}
\end{figure}

\begin{example} \label{ex:M4(2,2)}
By Proposition~\ref{prop:coeff-formula}, the coefficients of the input-output equation for the model $\mathcal{M}_4(2,2)$ are as follows:
    \begin{align*}
        c_3 ~&=~k_{12} + k_{13} + k_{14} + k_{21} + k_{31} + k_{41}
            \\
        c_2 ~&=~k_{12}k_{13}+k_{12}k_{14}+ k_{12}k_{31}+k_{12}k_{41}+k_{13}k_{14}+k_{13}k_{21} +k_{13}k_{41} +k_{14}k_{21}+k_{14}k_{31}
            \\
        c_1
            ~&=~k_{12}k_{13}k_{14}+k_{12}k_{13}k_{41}+k_{12}k_{14}k_{31}+k_{13}k_{14}k_{21}
            \\
        d_2 ~&=~k_{13}+k_{14}+k_{21}+k_{31}+k_{41}
            \\
        d_1 ~&=~k_{13}k_{14}+k_{13}k_{21}+k_{13}k_{41}+k_{14}k_{21}+k_{14}k_{31}
            \\
        d_0 ~&=~ k_{13}k_{14} k_{21}
    \end{align*}
It is straightforward to check that the difference between the two linear coefficients, $c_3-d_2$, equals $k_{12}$ and so this parameter is globally identifiable.  Additionally, the difference between the degree-two coefficients, $c_2-d_1$, is a multiple of $k_{12}$ and, indeed, equals $k_{12}(d_2-k_{21})$.  As we already saw that $k_{12}$ is globally identifiable, we conclude that $k_{21}$ is generically globally identifiable (``generically'' is here because, when $k_{12}=0$, it may not be possible to recover $k_{21}$).
These ideas generalize, as seen in the next result (equation~\eqref{eq:M22-k21} in particular).
\end{example}

\begin{proposition}[$\mathcal{M}_n(2,2)$] \label{prop:2-2a}
Let $n \geq 4$.
If $ \mathcal M =(G, \In, \Out, \Leak)$
is an $n$-compartment mammillary model 
with 
$\In=\{2\}$, 
$\Out=\{2\}$, and 
$\Leak = \varnothing$, then 
    \begin{enumerate}
        \item the parameters $k_{12}$ and $k_{21}$ are generically globally identifiable (in fact, $k_{12}$ is globally identifiable), and, moreover,
        their values can be determined by the following formulas:
        \begin{align} \label{eq:M22-k21}
        \notag
            k_{12} ~&=~ c_{n-1}-d_{n-2}~, \quad {\rm and} \\
            k_{21} ~&=~ 
                d_{n-2} - \frac{c_{n-2}-d_{n-3}}{k_{12}}
                ~=~ d_{n-2} - \frac{c_{n-2}-d_{n-3}}{c_{n-1}-d_{n-2}}
            ~,
        \end{align}
        where $c_i$ and $d_i$ are   
        the coefficients of the input-output equation, as in~\eqref{eq:eq:i-o-c-and-d}--\eqref{eq:bortner-formula-coeff}; and
        \item the parameters $k_{13}, k_{14}, \dots, k_{1n}, ~ k_{31}, k_{41}, \dots, k_{n1}$ are SLING.
    \end{enumerate}
\end{proposition}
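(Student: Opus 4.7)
\medskip

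\noindent\textbf{Proof proposal.}

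The plan is to read off $k_{12}$ and $k_{21}$ directly from low-degree input–output coefficients using the combinatorial formula in Proposition~\ref{prop:coeff-formula}, and then to handle the remaining parameters using the symmetry arguments already in place.

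First I would compute $c_{n-1}$ and $d_{n-2}$ combinatorially. By Proposition~\ref{prop:coeff-formula}, $c_{n-1}$ is the sum over $1$-edge spanning incoming forests of $G$, which is just the sum of all edge labels of $G$; and $d_{n-2}$ is the analogous sum over $1$-edge spanning incoming forests of $G_2^*$, where $G_2^*$ is obtained from $G$ by deleting the unique edge outgoing from compartment-$2$, namely the edge labeled $k_{12}$. (The component condition ``$2$ and $2$ lie in the same component'' is automatic since $\In=\Out=\{2\}$.) Therefore
\[
c_{n-1} - d_{n-2} ~=~ k_{12},
\]
so $k_{12}$ is an identifiable function from $\msc$ and hence globally identifiable, by Remark~\ref{rem:iden-fnc-vs-param}.

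Next I would analyze $c_{n-2}-d_{n-3}$, which by Proposition~\ref{prop:coeff-formula} is the sum of $\pi_F$ over $2$-edge spanning incoming forests $F$ of $G$ that contain the edge $k_{12}$. I would enumerate such $F$ as follows: once $k_{12}$ is included, compartment-$2$ already has its (unique) outgoing edge, and the second edge cannot be $k_{21}$ (which would create a cycle with $k_{12}$). The remaining choices are (a) any edge $k_{j1}$ outgoing from compartment-$1$ with $j\in\{3,\dots,n\}$, and (b) any ``incoming'' edge $k_{1j}$ with $j\in\{3,\dots,n\}$. Summing gives
\[
c_{n-2}-d_{n-3} ~=~ k_{12}\bigl(k_{13}+k_{14}+\cdots+k_{1n}+k_{31}+k_{41}+\cdots+k_{n1}\bigr)
 ~=~ k_{12}\bigl(d_{n-2}-k_{21}\bigr),
\]
where the last equality uses the formula for $d_{n-2}$ obtained in the same way as for $c_{n-1}$ above. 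Rearranging, and substituting $k_{12}=c_{n-1}-d_{n-2}$, yields the formula~\eqref{eq:M22-k21} for $k_{21}$. Applying Proposition~\ref{prop:polynom-to-iden} with $d=1$ (taking $g:= (c_{n-1}-d_{n-2})z - (c_{n-1}-d_{n-2})d_{n-2} + (c_{n-2}-d_{n-3})$, whose leading coefficient $c_{n-1}-d_{n-2}=k_{12}$ is a nonzero polynomial) shows that $k_{21}$ is generically globally identifiable, completing part~(1).

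For part~(2), the assumption $n\geq 4$ ensures that the transposition of any two peripheral compartments in $\{3,4,\dots,n\}$ (or more generally, any permutation of $\{3,\dots,n\}$) is an automorphism of $\mathcal M_n(2,2)$ in the sense of Definition~\ref{def:automorphism}; this swaps the pair $(k_{1j},k_{j1})$ with the corresponding pair at another peripheral compartment. Lemma~\ref{lem:symmetry} then implies that every such parameter $k_{1j}$ and $k_{j1}$ (with $j\in\{3,\dots,n\}$) is either SLING or unidentifiable; combined with Remark~\ref{rem:1-family-uniden}, which tells us that all parameters of $\mathcal M_n(2,2)$ are at least generically locally identifiable, we conclude these parameters are SLING.

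The main obstacle is the combinatorial enumeration in the second step, specifically being careful that no $2$-edge forest containing $k_{12}$ is over- or undercounted and that the resulting expression factors exactly as $k_{12}(d_{n-2}-k_{21})$; once this identity is in hand, everything else follows mechanically from earlier results.
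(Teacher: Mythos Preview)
Your proposal is correct and follows essentially the same approach as the paper's proof: both derive $k_{12}=c_{n-1}-d_{n-2}$ and $c_{n-2}-d_{n-3}=k_{12}(d_{n-2}-k_{21})$ via the same combinatorial enumeration of $1$- and $2$-edge spanning incoming forests (noting the vacuous connectedness condition since $\In=\Out$), and both handle part~(2) via the automorphism/symmetry argument combined with Remark~\ref{rem:1-family-uniden}. The only cosmetic difference is that you invoke Proposition~\ref{prop:polynom-to-iden} explicitly for $k_{21}$, whereas the paper simply solves~\eqref{eq:difference-in-deg-2} directly.
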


\begin{proof}
Part~(2) follows from 
Remark~\ref{rem:1-family-uniden}, 
Lemma~\ref{lem:symmetry}, 
 and the fact that every permutation of $\{3,4,\dots,n\}$ is a model automorphism of $\mathcal{M}$ (here, the assumption $n \geq 4$ is used).

For part~(1), we will use the formula for coefficents of the input-output equation (Proposition~\ref{prop:coeff-formula}).  In our model $\mathcal{M} = (G, \In, \Out, \Leak)$, the input and output are in the same compartment, so it is vacuously true 
that the input and output are always connected in spanning subgraphs of $G^*_2$ (recall that $G^*_2$ is obtained from $G$ by removing the outgoing edge $k_{12}$ from the output compartment-$2$). Therefore, the subgraph-connectedness condition for the right-hand side coefficients $d_i$, in~\eqref{eq:bortner-formula-coeff}, can be safely ignored in what follows.

We first analyze $k_{12}$.  By Proposition~\ref{prop:coeff-formula}, the linear-polynomial coefficient on the left-hand side, namely, $c_{n-1}$, is a sum over all edges of $G$. On the other hand, the corresponding coefficient on the right-hand side, $d_{n-2}$, is a sum over all edges except $k_{12}$ (because it was removed from $G$ to obtain $G^*_2$).  We conclude that $k_{12} = c_{n-1}-d_{n-2}$, as desired.

Now we consider $k_{21}$.  This time we focus on the degree-two coefficients in Proposition~\ref{prop:coeff-formula}, namely, $c_{n-2}$ and $d_{n-3}$.  Their difference is a sum over all $2$-edge spanning incoming forests of the mammillary graph such that one of the edges is $k_{12}$. The remaining edge in such a subgraph is any edge besides $k_{12}$ and $k_{21}$.  We therefore obtain the first equality here:
    \begin{align} \label{eq:difference-in-deg-2}
    \notag
        c_{n-2} - d_{n-3}
            ~&=~ 
            k_{12} (k_{13}+k_{14}+ \dots + k_{1n} ~+~ 
                    k_{31}+k_{41}+ \dots + k_{n1})
                    \\
            ~&=~ k_{12} (d_{n-2}-k_{21})~,
    \end{align}
and the second equality above follows from 
the description of $k_{21}$ given earlier in the proof.

Now it is straightforward to solve for $k_{21}$ in~\eqref{eq:difference-in-deg-2} and obtain the desired formula~\eqref{eq:M22-k21}.
\end{proof}

\subsection{Identifiability of \texorpdfstring{$\mathcal{M}_n(2,3)$}{Mn(2,3)}}
As noted earlier in Remark~\ref{rem:1-family-uniden}, the model $\mathcal{M}_n(2,3)$ contains unidentifiable parameters.  
Some parameters, however, are SLING; these parameters correspond to certain edges directed toward to the central compartment (see the [blue] solid arrows in Figure~\ref{fig:2-3}).  To illustrate the ideas behind the proof of this result (Proposition~\ref{prop:2-3} below), we present the following example.

\begin{figure}[ht]
\begin{center}
	\begin{tikzpicture}[scale=1.7]
 \draw (2.05,-.8) circle (0.2);
\draw (3,0) circle (0.2);
 	\draw (3.7,-1.2) circle (0.2);
 	\draw (4.4,-0.3) circle (0.2);
 	\draw (4 ,.8) circle (0.2);
    \node[] at (2.05,-.8){2};	
    	\node[] at (3, 0) {1};
    	\node[] at (3.7, -1.2) {3};
     \node[] at (4.3, 0.3) {$\vdots$};
    	\node[] at (4.4, -0.3) {4};
    	\node[] at (4, .8) {$n$};
     \draw[->][dashed][red] (2.8, 0) -- (2.2, -.6);
	 \draw[<-][dashed][red](2.85, -.15) -- (2.3, -.7);
       \draw[->][dashed][red] (3.1, -.3) -- (3.5, -1);
	 \draw[<-][dashed][red](3.15, -.2) -- (3.55, -0.9);
	 \draw[->][dashed][red] (3.25, -0.05) -- (4.1, -0.3);
	 \draw[<-][blue] (3.25, 0.05) -- (4.1, -0.2);
	 \draw[->][dashed][red] (3.2, .15) -- (3.8, .6);
	 \draw[<-][blue] (3.1, 0.2) -- (3.7, .65);
   \node[] at (2.3, -.2) {$k_{21}$};	
   \node[] at (2.8, -.6) {$k_{12}$};
    \node[] at (3.2, -.8) {$k_{31}$};
   	 \node[] at (3.7, -0.7) {$k_{13}$};
   	 \node[] at (3.9, .05) {$k_{14}$};
   	 \node[] at (3.8, -0.35) {$k_{41}$};
   	 \node[] at (3.25, 0.75) {$k_{1n}$};
   	 \node[] at (3.9, 0.35) {$k_{n1}$};
 	\draw (2.95, -1.1) circle (0.05);	
	 \draw[-] (3.5, -1.2) -- (3, -1.1);	
  
	 \draw[<-] (2.2, -1 ) -- (2.6, -1.3);	
   	 \node[] at (2.7,-1.3) {in};

\draw (1.7,-1.5) rectangle (5., 1.4);
	\end{tikzpicture}
\end{center}
    \caption{Parameter identifiability for $\mathcal{M}_n(2,3)$: 
    the red dashed arrows indicate parameters that we conjecture to be unidentifiable, and all others are SLING.}
    \label{fig:2-3}
\end{figure}
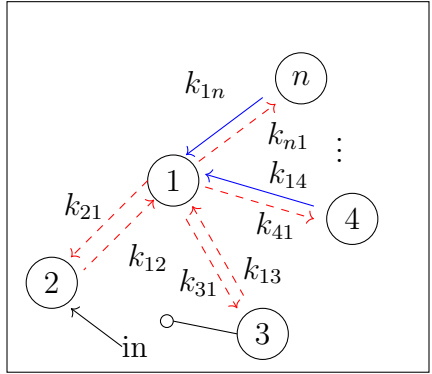

\begin{example} \label{ex:M5(2,3)}
For the model $\mathcal{M}_5(2,3)$, it is straightforward to apply Proposition~\ref{prop:coeff-formula} to obtain the following coefficients on the right-hand side of the input-output equation:
\begin{align} \label{eq:RHS-example-2-3}
        \notag
        d_3 ~&=~0
            \\
        d_2 ~&=~ k_{12}k_{31} 
            \\
        \notag
        d_1 ~&=~ k_{12}k_{31} (k_{14}+k_{15})
            \\
        \notag
        d_0 ~&=~ k_{12}k_{31} (k_{14} k_{15})~.
    \end{align}
    We see from the coefficients~\eqref{eq:RHS-example-2-3} that the following elementary symmetric polynomials are identifiable from the coefficient map: 
    $e_1(\Sigma')= k_{14}+k_{15}$ and
    $e_2(\Sigma') = k_{14}k_{15} $, where $\Sigma' =  \{k_{14}, k_{15} \}$.  This allows us to recover the set $\Sigma'$, but 
    the symmetry between compartment-$4$ and compartment-$5$ precludes us from distinguishing between $k_{14}$ and $k_{15}$.  We conclude that the parameters $k_{14}$ and $k_{15}$ are SLING. 
\end{example}

\begin{proposition}[$\mathcal{M}_n(2,3)$] \label{prop:2-3}
Let $n \geq 5$.
If $ \mathcal M =(G, \In, \Out, \Leak)$
is an $n$-compartment mammillary model 
with 
$\In=\{2\}$, 
$\Out=\{3\}$, and 
$\Leak = \varnothing$, then the parameters $k_{14}, k_{15}, \dots, k_{1n}$ are SLING.
\end{proposition}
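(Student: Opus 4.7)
The plan is to follow the same strategy used in the proofs of Propositions~\ref{prop:1-2} and~\ref{prop:2-1}: first compute the right-hand side coefficients $d_k$ of the input-output equation using the combinatorial formula of Proposition~\ref{prop:coeff-formula}, then exhibit a univariate polynomial in $z$ whose coefficients are identifiable functions and whose roots include each $k_{1i}$ for $i \in \{4,\ldots,n\}$, and finally invoke Lemma~\ref{lem:symmetry} together with Proposition~\ref{prop:polynom-to-iden} to conclude these parameters are SLING.

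First, I would analyze the spanning incoming forests of $G_3^*$ in which compartments~$2$ and~$3$ lie in the same connected component of the underlying undirected graph. Since the output compartment is~$3$, the graph $G_3^*$ is obtained from the mammillary graph by deleting the (unique) outgoing edge from compartment~$3$, namely $3 \to 1$ with label $k_{13}$. To place $2$ and $3$ in one connected component, such a forest $F$ must contain an undirected edge between $1$ and $3$, and the only candidate remaining in $G_3^*$ is $k_{31}$. Since vertex~$1$ can have at most one outgoing edge in an incoming forest, including $k_{31}$ forces the exclusion of every other outgoing edge from $1$ (namely $k_{21}, k_{41}, \ldots, k_{n1}$); in particular, the edge between $1$ and $2$ must be $k_{12}$ rather than $k_{21}$. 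Any subset of $\Sigma' := \{k_{14}, k_{15}, \ldots, k_{1n}\}$ can then be freely included without creating cycles or violating the incoming-forest condition. Counting edges yields
\begin{align*}
    d_k ~=~ k_{12}\, k_{31} \cdot e_{n-k-3}(\Sigma') \qquad \text{for } k = 0, 1, \ldots, n-1,
\end{align*}
with the convention that $e_j(\Sigma') = 0$ for $j < 0$ or $j > n-3$ (so $d_{n-2} = d_{n-1} = 0$). This generalizes the computation in Example~\ref{ex:M5(2,3)}.

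Next, I would define the function
\begin{align*}
    g(z; \mathbf{k}) ~:=~ \sum_{j=0}^{n-3} (-1)^j\, d_{n-3-j}\, z^{n-3-j} ~=~ k_{12}\, k_{31} \prod_{i=4}^n (z - k_{1i}),
\end{align*}
where the second equality uses the formula for $d_k$ derived above. Its coefficients (as a polynomial in $z$) are, up to sign, coefficients of the input-output equation, hence are polynomial functions identifiable from the coefficient map~$\msc$. The leading coefficient $d_{n-3} = k_{12}\, k_{31}$ is not the zero function. For each $i \in \{4, \ldots, n\}$, substituting $z = k_{1i}$ produces a factor of $0$, so $g|_{z = k_{1i}}$ is identically zero. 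Proposition~\ref{prop:polynom-to-iden} then gives that each $k_{1i}$ is generically locally identifiable.

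Finally, to rule out generic global identifiability, I would observe that every permutation of $\{4, 5, \ldots, n\}$ extended by the identity on $\{1, 2, 3\}$ is an automorphism of the model $\mathcal{M}$, since it preserves the graph, the input $\{2\}$, and the output $\{3\}$. The hypothesis $n \geq 5$ ensures $|\{4,\ldots,n\}| \geq 2$, so for each $i \in \{4,\ldots,n\}$ there exists a transposition giving an automorphism mapping the edge labeled $k_{1i}$ to a distinct edge labeled $k_{1i'}$. By Lemma~\ref{lem:symmetry}, each $k_{1i}$ is either SLING or unidentifiable, and combined with the generic local identifiability established above, the parameters must be SLING. The main step requiring care is the enumeration of spanning incoming forests of $G_3^*$ connecting input and output; once that combinatorial picture is in hand, the remaining arguments are essentially routine applications of the machinery already developed.
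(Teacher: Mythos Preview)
Your proof is correct and follows essentially the same approach as the paper's: both compute the right-hand side coefficients $d_k$ via the spanning-incoming-forest enumeration and then use the symmetry automorphisms (Lemma~\ref{lem:symmetry}) to rule out generic global identifiability. The only difference is in how local identifiability is extracted from the $d_k$'s: the paper observes that the ratios $d_{n-3-j}/d_{n-3}$ recover the elementary symmetric polynomials $e_j(\Sigma')$ and then invokes Lemma~\ref{lem:deg-elem-sym}, whereas you package the same information into the polynomial $g(z;\mathbf{k}) = k_{12}k_{31}\prod_{i=4}^n(z-k_{1i})$ and apply Proposition~\ref{prop:polynom-to-iden} directly. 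Your route is arguably a bit cleaner, since Proposition~\ref{prop:polynom-to-iden} already handles the genericity coming from the nonvanishing of the leading coefficient $d_{n-3}=k_{12}k_{31}$, and it avoids the separate appeal to Lemma~\ref{lem:deg-elem-sym}; but the underlying content is identical.
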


\begin{proof}
Every permutation of $\{4,5,\dots,n\}$ is a model automorphism of $\mathcal{M}$.
Hence, by Proposition~\ref{lem:symmetry}, 
the parameters $k_{14}, k_{15}, \dots, k_{1n}$ are either all SLING or all unidentifiable
(here, the assumption $n \geq 5$ is used). Therefore, it suffices to show that $k_{14}, k_{15}, \dots, k_{1n}$ are generically locally identifiable.

Let $\Sigma':=\{k_{14}, k_{15}, \dots, k_{1n} \}$.  We claim that the right-hand side coefficients, $d_i$, as in~\eqref{eq:bortner-formula-coeff}, can be written in terms of elementary symmetric polynomials on $\Sigma'$,  as follows:
\begin{align} \label{eq:rhs-proof-2-3}
    \notag
d_{n-2} ~&=~ 0 \\
\notag
d_{n-3} ~&=~ k_{12}k_{31}
\\
    \notag
d_{n-4} ~&=~ k_{12}k_{31}~e_1(\Sigma')
\\
d_{n-5} ~&=~ k_{12}k_{31}~e_2(\Sigma')
\\
    \notag
        ~&~\vdots \\
\notag
d_{0} ~&=~ k_{12}k_{31}~e_{n-3}(\Sigma')        ~.
\end{align}
(These equations~\eqref{eq:rhs-proof-2-3} in the case of $n=5$ were shown earlier in~\eqref{eq:RHS-example-2-3}.)

We verify the formulas~\eqref{eq:rhs-proof-2-3} by using Proposition~\ref{prop:coeff-formula}, as follows.  
First, the input and output are in distinct peripheral compartments, so they can not be connected by a single edge.  Thus, $d_{n-2} = 0$.  Next, 
the only spanning incoming forests with two edges that connect the input to the output here are $k_{12}k_{31}$ and $k_{21}k_{13}$, and since the edge $k_{13}$ is deleted in $G^*$, we must have $d_{n-3}=k_{12}k_{31}$. 
Similarly, all terms in the remaining coefficients $d_i$, for $i=n-2,n-1,\dots, 0$, must have a factor of $k_{12}k_{31}$, since this is the only pair of edges that can take part in a spanning incoming forest that connects the input and output. If we begin with $k_{12}$ and $k_{31}$ and then try to add more edges, the only edges we can add without creating cycles or having more than one outgoing edge from compartment-$1$ are the edges 
of $\Sigma' = \{k_{14}, k_{15}, \dots, k_{1n}\}$.  Indeed, such edges can be added arbitrarily, which yields the elementary symmetric polynomials shown in~\eqref{eq:rhs-proof-2-3}. 

Next, the equations~\eqref{eq:rhs-proof-2-3} imply that the elementary symmetric polynomials appearing in these equations, namely, 
$e_1(\Sigma'), e_2(\Sigma'), \dots, e_{n-3}(\Sigma')$, are identifiable from the coefficient map.  Now Lemma~\ref{lem:deg-elem-sym} implies that each parameter $k_{14}, k_{15}, \dots, k_{1n}$, when viewed as a (projection) function $\mathbb{R}^{|E| + |\Leak|} \to \mathbb{R}$, is locally identifiable from the coefficient map.  Hence, by Remark~\ref{rem:iden-fnc-vs-param}, 
each of the parameters $k_{14}, k_{15}, \dots, k_{1n}$
is generically locally identifiable.
\end{proof}

\begin{remark}[$\mathcal{M}_4(2,3)$] \label{rem:formula}
The ideas in the proof of Proposition~\ref{prop:2-3} can be used to show that, in the $n=4$ version of the model $\mathcal{M}_n(2,3)$, 
the parameter $k_{14}$ is generically globally identifiable. Indeed, in this case, the equations~\eqref{eq:rhs-proof-2-3} imply that $k_{14}= \frac{d_0}{d_1}$.    
\end{remark}

Next, we conjecture that all parameters not considered in Proposition~\ref{prop:2-3} are unidentifiable, as follows (cf.\ Figure~\ref{fig:2-3}).

\begin{conjecture}[$\mathcal{M}_n(2,3)$] \label{conj:2-3}
Let $n \geq 5$.
If $ \mathcal M =(G, \In, \Out, \Leak)$
is an $n$-compartment mammillary model 
with 
$\In=\{2\}$, 
$\Out=\{3\}$, and 
$\Leak = \varnothing$, then the following parameters are unidentifiable:
\[
k_{12}~, \quad
k_{21}~, \quad 
k_{13}~, \quad {\rm and}
    \quad 
k_{31}~, k_{41}, \dots, k_{n1}~.
\]
\end{conjecture}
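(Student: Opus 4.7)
The plan is to exhibit, for generic parameter vectors $\mathbf{k}^*$, a smooth $1$-parameter family of alternative parameter vectors sharing the same coefficient map as $\mathbf{k}^*$, and then to verify that each of $k_{12}, k_{21}, k_{13}, k_{31}, k_{41}, \dots, k_{n1}$ varies nontrivially along this family; by Definition~\ref{def:iden-param}, this shows each of these parameters is unidentifiable. The starting point is to make the coefficient map transparent. A short Laplace-domain computation (equivalently, bookkeeping via Propositions~\ref{prop:coeff-formula} and~\ref{prop:LHS-coefficients-updated}) identifies the transfer function as
$$H(s) \;=\; \frac{k_{12}k_{31}\prod_{j=4}^{n}(s+k_{1j})}{s\,P(s)}, \qquad P(s) \;=\; Q(s) + \sum_{j=2}^{n} k_{j1}\,\frac{Q(s)}{s+k_{1j}},$$
where $Q(s) := \prod_{j=2}^{n}(s+k_{1j})$. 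Consequently the information recoverable from the coefficient map is exactly the multiset $\Sigma' := \{k_{1j}\}_{j \geq 4}$, the product $K := k_{12}k_{31}$, and the monic polynomial $P(s)$. Substituting $s = -k_{1j}$ into $P(s)$ kills all but one summand and yields the clean reconstruction formula
$$k_{j1} \;=\; \frac{P(-k_{1j})}{\prod_{\ell \neq j}(k_{1\ell}-k_{1j})}\,.$$

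A parameter count now suggests a $1$-dimensional fiber of the coefficient map: there are $2(n-1)$ parameters and $(n-3)+1+(n-1) = 2n-3$ independent identifiable coordinates. I would construct the family explicitly as follows. Fix the identifiable data from $\mathbf{k}^*$, keep $k'_{1j} := k^*_{1j}$ for $j \geq 4$, and write $a := k'_{12}$ and $b := k'_{13}$. Define each $k'_{j1}$ by the reconstruction formula above, using the fixed $P(s)$ and the new $Q'(s) := (s+a)(s+b)\prod_{j \geq 4}(s+k^*_{1j})$; a Vandermonde-type argument (the difference of two monic degree-$(n-1)$ polynomials that agree at $n-1$ distinct points is zero) shows this ensures all the $c_i$'s match. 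The only remaining constraint is $a\,k'_{31} = K$, which rearranges to the single equation
$$a\,P(-b) \;=\; K(a-b)\prod_{j=4}^{n}(k^*_{1j}-b).$$
This equation is affine in $a$, and a direct computation shows the coefficient of $a$, evaluated at $b = b^* := k^*_{13}$, equals $-b^*\,k^*_{31}\prod_{j \geq 4}(k^*_{1j}-b^*)$, which is nonzero for generic $\mathbf{k}^*$. Hence solving for $a$ produces a smooth $1$-parameter family $(a(b), b)$ passing through $(k^*_{12}, k^*_{13})$.

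Finally, I would confirm that each parameter in the statement varies nontrivially along this family. By construction $b = k'_{13}$ is the family parameter, and $a(b) = k'_{12}$ is a nonconstant rational function of $b$; hence $k'_{31} = K/a(b)$ also varies. The parameter $k'_{21} = P(-a)/[(b-a)\prod_{\ell \geq 4}(k^*_{1\ell}-a)]$ is a nonconstant rational function of $(a,b)$, and for each $j \geq 4$ the parameter $k'_{j1}$ depends on $(a,b)$ only through the factor $(a-k^*_{1j})(b-k^*_{1j})$, which is nonconstant on the curve for generic $\mathbf{k}^*$. The main obstacle is the bookkeeping in the last two stages: verifying (i) that the constructed curve yields genuine parameter vectors in an open positive-measure subset of $\mathbb{R}^{2(n-1)}$ (equivalently, that $a(b), b \notin \{k^*_{1j}\}_{j \geq 4}$ and $a(b) \neq b$ on an open neighborhood of $b^*$), (ii) the Vandermonde step that lets pointwise reconstruction match the full polynomial $P(s)$, and (iii) that the nonconstancy of each listed parameter holds on a full-measure subset of parameter space. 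Each of these is an open condition on $\mathbf{k}^*$, so the argument should succeed on a full-measure subset.
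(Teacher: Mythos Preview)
The paper does \emph{not} prove this statement; it is explicitly labeled as a conjecture, supported only by computational verification for small $n$ via software such as \texttt{SIAN}. So there is no proof in the paper to compare your proposal against. Your outline, if the details go through, would resolve an open problem in the paper.

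Your approach looks sound. The transfer-function identity $H(s)=k_{12}k_{31}\prod_{j\ge 4}(s+k_{1j})\big/\big(s\,P(s)\big)$ with $P(s)=Q(s)+\sum_{j\ge 2}k_{j1}\,Q(s)/(s+k_{1j})$ is correct (it is equivalent to the coefficient formulas~\eqref{eq:rhs-proof-2-3} together with Proposition~\ref{prop:LHS-coefficients-updated}), and the pointwise reconstruction $k_{j1}=P(-k_{1j})/\prod_{\ell\ne j}(k_{1\ell}-k_{1j})$ follows immediately. Your one-parameter family is well constructed: defining $k'_{j1}$ by the reconstruction formula forces $P'(-k'_{1j})=P(-k'_{1j})$ at $n-1$ distinct points, and since $P'-P$ has degree at most $n-2$, this indeed gives $P'=P$. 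The constraint $a\,P(-b)=K(a-b)\prod_{j\ge 4}(k^*_{1j}-b)$ is genuinely linear in $a$, and your computation of the $a$-coefficient at $b=b^*$ is correct, so the implicit function theorem yields a smooth curve through $\mathbf{k}^*$.

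The one place I would ask you to tighten the argument is the nonconstancy claims for $k'_{21}$ and $k'_{j1}$ ($j\ge 4$). For $k'_{j1}$ you can compute $\frac{d}{db}\big[(a(b)-k^*_{1j})(b-k^*_{1j})\big]$ at $b=b^*$ and observe that its vanishing imposes a nontrivial polynomial condition on $\mathbf{k}^*$ (since $a'(b^*)$ is a fixed rational function of $\mathbf{k}^*$); the union over $j$ of these conditions is still measure zero. For $k'_{21}$, the cleanest route is probably to note that constancy of $k'_{21}$ along the curve would impose a second independent polynomial relation between $a$ and $b$, which together with the defining equation cuts the fiber to finitely many points---contradicting the already-established nontriviality of the curve---for generic $\mathbf{k}^*$. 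Once these are made precise, your argument proves Conjecture~\ref{conj:2-3}.
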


\begin{remark}  \label{rem:2-3}
    By Lemma~\ref{lem:symmetry}, 
    the parameters 
    $k_{41}, k_{51}, \dots, k_{n1}$ in 
the model $\mathcal{M}_n(2,3)$ all have the same type of identifiability.  Also, the equation $d_{n-3}=k_{12} k_{31}$,  from equation~\eqref{eq:rhs-proof-2-3} in the proof of Proposition~\ref{prop:2-3}, implies that $k_{12}$ and $k_{31}$ have the same type of identifiability.   Conjecture~\ref{conj:2-3} asserts that all of these parameters are, in fact, unidentifiable.
\end{remark}

\section{Discussion} \label{sec:discussion}
In this work, we prove the identifiability properties of individual parameters (specifically, generically globally identifiable versus SLING) in mammillary models with one input, one output, and no leaks (Theorem~\ref{thm:summary}). As noted earlier, we are essentially the first to undertake such an investigation for an infinite family of models.  We therefore expect that our work to lead the way for future studies, involving other families of models.  

A natural candidate for such future investigations is the family of {\em catenary models}, those in which the underlying graph is a bidirected path (see~\cite{cobelli-lepschy-romaninjacur} and~\cite[\S 5.8]{cobelli2002identifiability} for partial results in this direction).  A starting point for such a project is the database of catenary models with three compartments that appears in the recent work of Dessauer {\em et al.}~\cite[Appendix~B]{cycle-cat}.  
In this database, each model is shown together with the identifiability properties of all parameters.  
We also direct the reader to additional databases of linear compartmental models, one for models with three compartments (created by Norton)~\cite{norton1982investigation} and one for models with up to four compartments (created by Gogishvili)~\cite{gogishvili-database}.

Returning our attention to mammillary models, there are a number of future directions.  The first is to resolve Conjecture~\ref{conj:2-3}, which pertains to the parameters in model~$\mathcal{M}_n(2,3)$ that we believe are unidentifiable (as shown in Figure~\ref{fig:summary}).  Additionally, it is natural to ask how our results extend to allow for (mammillary models with) any number of inputs, outputs, and leaks.  In other words, 
how does Theorem~\ref{thm:summary} generalize when inputs, output, and/or leaks are added?  This question is open, although some partial results were proven by Chau~\cite{chau-mam} and Cobelli, Lepschy, and Romanin Jacur~\cite{cobelli-lepschy-romaninjacur} (see also~\cite[\S 5.8]{cobelli2002identifiability}).

A version of this question -- at the level of models rather than their parameters -- has been addressed in general~\cite{CJSSS,GOS, gogishvili-database,GHMS}, but we are not aware of results in this direction pertaining to individual parameters. 
The aforementioned questions -- which investigate the effect on identifiability of various operations on models -- are important in the context of systems biology, because a model might not be completely known or we may want to predict the effect of various interventions that lead to changes in the model.  For instance, we might not know precisely which compartments have leaks, and so we would want to answer the question of identifiability for a family of models arising from various possible sets of leaks.  Additionally, although we considered models without leaks in this work, we recognize that models that do contain leaks are more realistic in biological settings, as there are very few systems that are completely closed. Finally, allowing for more inputs or outputs represents additional controls or measurements of the system, which again may better reflect the situation in experimental settings.  In short, our results take the first step into an investigation with direct relevance for biological applications.


{\small
\subsection*{Acknowledgements}
This research was initiated by Katherine Clemens, Jonathan Martinez, and Michaela Thompson at the 2023 REU in the Department of Mathematics at Texas A\&M University, in which Anne Shiu and Benjamin Warren were research mentors. 
The REU was supported by the NSF (DMS-2150094); and 
AS and BW were partially supported by the NSF (DMS-1752672).  The authors thank Cashous Bortner and Alejandro F. Villaverde for helpful feedback on an earlier version of this work.  The authors also acknowledge an anonymous reviewer whose insightful comments improved this work.

\subsection*{Competing interests}
The authors have no competing interests to declare that are relevant to the content of this article.

\subsection*{Data availability statement}
This work does not have any associated data.
}

\bibliographystyle{plain}
\bibliography{references}

\end{document}